\newtheorem{theorem}{Theorem}[section]
\newtheorem{lemma}[theorem]{Lemma}
\newtheorem{corollary}[theorem]{Corollary}
\newtheorem{remark}[theorem]{Remark}
\numberwithin{equation}{section}
\newcommand{\bbR}{\mathbb{R}}
\newcommand{\bbQ}{\mathbb{Q}}
\newcommand{\calT}{\mathcal{T}}
\newcommand{\calE}{\mathcal{E}}
\newcommand{\calB}{\mathcal{B}}
\newcommand{\calO}{\mathcal{O}}
\newcommand{\e}{K}
\def\vn{{\bf n}}
\def\vu{{\bm u}}
\def\ve{{\bm e}}
\def\vv{{\bm v}}
\def\vx{{\bm x}}
\def\vV{{\bm V}}
\def\vQ{{\bm Q}}
\def\vw{{\bm w}}
\def\vzero{\bm{0}}
\newcommand{\hx}{\hat{x}}
\newcommand{\hy}{\hat{y}}
\def\vtn{\widetilde{\bf n}}
\def\vtQ{\widetilde{\bm Q}}
\def\we{\tilde{e}}
\def\vsig{\bm{\Gamma}}
\def\ah{a_{h,1}}
\def\sh{c_{1}}
\def\nnorm{\bm{V}_{h,1}}
\newcommand{\dx}{\,\mathrm{d}x}
\newcommand{\dhx}{\,\mathrm{d}\hat{x}}
\newcommand{\dhy}{\,\mathrm{d}\hat{y}}
\newcommand{\ds}{\,\mathrm{d}s}
\definecolor{liua}{rgb}{1,0,0}
\newcommand{\liul}[1]{{\color{black}#1}}
\newcommand{\wangI}[1]{{\color{black}#1}}
\begin{document}

\title[Weak Galerkin mixed method on curved domains]{A Weak Galerkin Mixed Finite Element Method for second order elliptic equations on 2D Curved Domains}
\author{Yi Liu}
\address{School of Mathematical Sciences and Jiangsu Key Laboratory for NSLSCS, Nanjing Normal University, Nanjing, China}
\email{200901005@njnu.edu.cn}

\author{Wenbin Chen}
\address{School of Mathematical Sciences and Shanghai Key Laboratory for Contemporary Applied Mathematics, Fudan University, Shanghai, China}
\email{wbchen@fudan.edu.cn}

\author{Yanqiu Wang}
\address{School of Mathematical Sciences and Jiangsu Key Laboratory for NSLSCS, Nanjing Normal University, Nanjing, China}
\email[Corresponding author]{yqwang@njnu.edu.cn}

\thanks{Liu and Wang are supported by the NSFC grant 12171244. Chen is supported by NSFC grant 12071090.}


\begin{abstract}
  This article concerns the weak Galerkin mixed finite element method (WG-MFEM) for second order elliptic equations on 2D domains with curved boundary.
  The Neumann boundary condition is considered since it becomes the essential boundary condition in this case.
  It is well-known that the discrepancy between the curved physical domain and the polygonal approximation domain leads to a loss of accuracy
  for discretization with polynomial order $\alpha>1$.
  The purpose of this paper is two-fold. First, we present a detailed error analysis of the original WG-MFEM for solving problems on curved domains,
  which exhibits an $O(h^{1/2})$ convergence for all $\alpha\ge 1$. 
  It is a little surprising to see that even the lowest-order WG-MFEM ($\alpha=1$) experiences a loss of accuracy.
  This is different from known results for the finite element method (FEM) or the mixed FEM,
  and appears to be a combined effect of the WG-MFEM design and the fact
  that the outward normal vector on the polygonal approximation domain is different from the one on the curved domain.
  Second, we propose a remedy to bring the approximation rate back to optimal
  \wangI{by employing two techniques.
    One is a specially designed boundary correction technique.
    The other is to take full advantage of the nice feature that weak Galerkin discretization can be
    defined on polygonal meshes, which allows the curved boundary to be better approximated by multiple short edges
    without increasing the total number of mesh elements.}
  Rigorous analysis shows that a combination of the above two techniques renders optimal convergence for all $\alpha$.
  Numerical results further confirm this conclusion.
  
  \end{abstract}
\keywords{weak Galerkin method, polygonal mesh, curved domain, mixed formulation}

\subjclass[2020]{65N15, 65N30}

\maketitle

\section{Introduction}
Many practical problems arising in science and engineering are posed on domains with curved boundaries.
When such problems are approximated on polygonal or polyhedral computational domains,
the geometric difference between the two leads to a loss of approximation accuracy \cite{strang1973change,thomee1971polygonal} for high-order elements.
To \wangI{resolve} this issue, a straight-forward idea is to reduce the geometric error down to the same level of the approximation error.
Popular methods following this track include the isoparametric finite element method \cite{e1,l1}
and the isogeometric analysis \cite{h1,h2}. However, due to their specialized design, neither of them can
be applied to meshes consisting of polygons or polyhedra.

In the past two decades, discretizations on polygonal and polyhedral meshes have gained considerable
attention in the scientific computing community. Various numerical schemes have been proposed, including
the virtual element method (VEM) (see \cite{Veiga13} and references therein),
the discontinuous Galerkin method (DG) \cite{Cockburn09, Gassnera09, mulin2015dg},
and the weak Galerkin method (WG) \cite{wang2013weak, wang2014weak, mwy-wg-stabilization}, to name a few.
Very recently, researchers start to apply these discretizations to curved domains, which requires
\wangI{innovative techniques, with its reason explained above}.
One emerging method is the boundary correction technique, which may have its root date back to a 1972 paper by Bramble, Dupont and Thom\'{e}e \cite{BDH72}.
The idea is to use normal-directional Taylor expansion, in most cases just a linear approximation, to correct function values on the boundary.
Burman et. al. \cite{Burman2018, Burman2019} proposed the technique for a CutFEM discretization in 2018.
It was soon applied to VEM by Bertoluzza et. al. \cite{Bertoluzza2019}.
In 2019, Cheung et. al. \cite{t4} proposed a polynomial extension FEM which is based on an averaged Taylor expansion.
In a series of papers starting from 2018, Main and Scovazzi \cite{Main2018a, Main2018b, Atallah20}
designed a shifted boundary FEM on non-fitted meshes, i.e., boundary nodes of the mesh may not lie on the curved physical boundary.
However, their method uses 1st-order (linear) Taylor expansion and hence only works for linear elements.
Finally, we mention an earlier but closely related work \cite{c3}, where instead of Taylor expansion
the authors used a path integration to achieve a similar `boundary correction' effect.

A totally different track, first proposed for VEM by Beir\~{a}o da Veiga et. al. \cite{Veiga19} in 2019,
is to define the discretization directly on curved mesh elements.
This is possible because of the `skeletal' style design of VEM, where the degrees of freedom (dofs) in the interior and on the boundary of each
mesh element are defined separately.
In \cite{Veiga19}, dofs as well as related shape functions on curved boundary edges
are defined using the parameter in the parametric equation of the curved boundary.
Hence the shape functions on curved edges are no longer polynomials in the physical space.
Later a modification was proposed \cite{Veiga20}
which uses the restriction of physical polynomials to define shape functions on curved boundary edges.
In 2021, Mu applied the idea to the primal WG discretization \cite{Mu21}.
Because of the direct use of the curved boundary, the implementation of these methods requires
\wangI{numerical integration formulae on curved mesh elements, as well as
a mapping between each curved boundary segment and its flat counterpart in the parametric space}.
The theoretical analysis is also more complicated as it has to deal with the parametric mapping.

In this paper, we consider the weak Galerkin mixed finite element method (WG-MFEM) on 2D curved domains.
Since its first debut, the WG method has been quickly applied to various situations
\cite{chen2016weak, mu2014weak, Zhangran, Zhangran1, wang2014weak, wang2016weak, zhang2018weak},
among which \cite{wang2014weak} focuses on the WG-MFEM for second order elliptic equations and
\cite{chen2016weak} uses WG-MFEM to solve coupled Darcy-Stokes equations.
However, there seems to be no rigorous analysis of the original WG-MFEM on curved domains.
The first objective of this paper is to fill this gap.
Although analysis of discretizations in the primal formulation on curved domains has long been
well-known, study of the mixed formulation is relatively rare.
We shall first clarify that in either cases, the main difficulty lies in how to impose the essential boundary condition.
For the mixed formulation of second order elliptic equations, it is the Neumann boundary condition that becomes essential.
A subtlety arises as the Neumann boundary condition involves the outward normal vector on the boundary,
which is {\em different} on the curved boundary and its polygonal approximation.
To our knowledge, such a problem, as well as subsequent mixed-FEM error analysis on curved domains, was first studied by
Bertrand et. al. \cite{bertrand2014first, bertrand2014first1} for the Raviart-Thomas element in 2014.
Both the analysis and numerical results show that the loss of accuracy only occurs for high-order elements.
Therefore it was a little surprising when we found that the original WG-MFEM yields only an $O(h^{1/2})$ convergence for all $\alpha$,
which means an accuracy loss even for the lowest-order WG-MFEM discretization.
This appears to be a combined effect of the WG-MFEM design and the outward normal vector issue mentioned above.

We then propose a remedy to bring the approximation rate back to optimal, which is the second objective of this paper.
\wangI{The remedy employs two techniques. One is a specially designed boundary correction technique that
  treats the difference between outward normal vectors on the curved domain and the polygonal computational domain.
  Unlike the boundary correction \cite{Burman2018, Burman2019, t4, Main2018a, Main2018b, Atallah20} designed for Dirichlet boundary conditions,
  ours mainly deals with the normal component of the flux.
  The implementation is easy and does not involve integration on curved regions.
  But one has to be careful about the discrete compatibility condition, noticing that the solution to the pure Neumann boundary problem is not unique.
  The other technique is a simple strategy proposed in \cite{wgcurved},
  which uses multiple short straight edges to obtain a better geometric approximation to the curved boundary.
  Since the WG discretizations can be defined on polygonal meshes, the curved boundary is then better approximated
  by polygonal elements with multiple short edges.
  Mathematically, this multiple short edge approach}
is still a `linear' approximation and hence the number of short edges required will definitely increase with $\alpha$.
The main advantage of this strategy \wangI{lies in} its simplicity in the implementation.
{\em The algorithm itself remains untouched.} One only needs to provide a new polygonal mesh consisting of multiple short boundary edges
which better approximates the curved domain.
We show that \wangI{combining the above two techniques and} using suitable meshes, the modified WG-FEM reaches optimal approximation rates for all $\alpha$.
 
The paper is organized as follows.
In Section \ref{WF}, we introduce the model problem, the notation and mesh assumptions.
In Section \ref{section3}, we present a rigorous analysis of the original WG-MFEM on curved domains.
The modified WG-FEM and its theoretical analysis are given in Section \ref{section4}.
Finally, numerical results are presented in Section \ref{section5}.

\section{Model problem and mesh assumptions} \label{WF}

Let $\Omega\subset \bbR^2$ be a general domain with Lipschitz-continuous and possibly curved boundary $\partial\Omega$.
Consider the Poisson's equation in its mixed form:
{\it Given $g$ in $L^2(\Omega)$, find functions $\vu$ and $p$ such that}
\begin{equation}
\begin{aligned}
\vu+\nabla p&=\vzero\quad &&\text{in } \Omega,\\\label{primalproblem}
\nabla\cdot\vu&=g\quad &&\text{in } \Omega.
\end{aligned}
\end{equation}

Here we conveniently use bold face characters to denote vectors or vector-valued functions.
Equip system \eqref{primalproblem} with the homogeneous Neumann boundary condition as follows:
\begin{equation}
\vu\cdot \vtn=0\qquad \text{on } \partial\Omega,\label{primalcondition}
\end{equation}
where $\vtn$  is the unit outward normal vector on $\partial\Omega$.
System \eqref{primalproblem}-\eqref{primalcondition} is well-posed as long as the following compatibility condition holds:
\begin{equation} \label{eq:compat}
\int_{\Omega}g\dx=0.
\end{equation}
Moreover, the solution is unique assuming $\int_{\Omega}p\dx=0$.

Denote by $H^m(D)$, $m\geq 0$, the usual Sobolev space defined on an open bounded domain $D\subset\Omega$,
and endow it with the inner-product $(\cdot,\cdot)_{m,D}$, the norm $\|\cdot\|_{m,D}$ and the seminorm $|\cdot|_{m,D}$.
Let $H_0^m(D) = \overline{C_0^{\infty}}^{H^m(D)}$.
When $m=0$, the space $H^0(D)$ is identical to $L^2(D)$.
In this case, we use $(\cdot,\cdot)_D$ to denote the $L^2$ inner-product on $D$.
The above notation also extends to a curve or edge $e$ in $\overline{D}$.
Moreover, denote by $\langle\cdot,\cdot\rangle_{e}$ the duality pair on $e$.
Denote by $H(\text{div},D)$ the space of vector-valued functions with all its components and divergence in $L^2(D)$.

Define spaces
$$
\begin{aligned}
\vV&= \{\vv\in H(\text{div},\Omega)\,|\,\vv\cdot\vtn=0 \text{ on } \partial\Omega\},\\
\Psi&= L^2_0(\Omega)\triangleq\left\{q\in L^2(\Omega)\,\bigg{|}\,\int_{\Omega}q\dx=0\right\}.
\end{aligned}
$$
 Then, the mixed variational formulation of the Poisson's equation with homogeneous Neumann boundary condition can be written as: {\it Find $(\vu,p)\in\vV\times\Psi$ such that}
 \begin{equation}\label{wf}
 \begin{cases}
 a(\vu,\vv)+b(\vv,p)=0\quad&\forall\, \vv\in\vV,\\
 b(\vu,q)=-(g,q)_{\Omega}\quad&\forall\, q\in\Psi,
 \end{cases}
 \end{equation}
  where 
  $$a(\vu,\vv)= (\vu,\vv)_{\Omega}, \qquad b(\vv,q)=-(\nabla\cdot\vv,q)_{\Omega}.$$
  The existence and uniqueness of a weak solution to the mixed problem (\ref{wf}) can be found in \cite{boffi2013mixed}.
  
\smallskip
Let $\calT_h$ be a body-fitted partition of $\Omega$ consisting of polygons.
By `body-fitted' we mean that each boundary edge of $\calT_h$ has its two end points lying on $\partial\Omega$.
Let $\overline{\Omega}_h=\cup_{\e\in\calT_h}\e$.
If $\Omega$ is a polygonal domain in $\bbR^2$, the domains $\Omega_h$ and $\Omega$ are identical in the case of a fitted mesh, i.e.,  $\Omega_h=\Omega$.
But when $\Omega$ has curved boundary, $\Omega_h$ differs from $\Omega$. It's easy to see that when $\Omega$ is convex, we have $\Omega_h\subset\Omega$.
In general, $\Omega_h$ can be viewed as an approximation to $\Omega$.
For the sake of simplicity, we only present theoretical analysis for the case when $\Omega$ is convex.
Generalization to non-convex domains can be done but requires a non-trivial use of extension operators.
In Section \ref{section5}, a numerical example in non-convex domain is presented, which exhibits the same behavior as experiments in convex domains.

Denote by $h_\e$ the diameter of each element $\e\in\calT_h$, and let $h = \max_{\e\in\calT_h} h_\e$.
Denote by $\e_0$ and $\partial\e$ the interior and the boundary of $\e$, respectively. 
Let $\calE_h$ be the set of all edges in $\calT_h$, which are straight edges.
For each edge $e \in\calE_h$, denote by $h_e$ its length.
Let $\calE^I_h$ and $\calE^B_h$ be the set of all interior and boundary edges, respectively, in $\calT_h$.
Denote $s = \max_{e\in\calE_h^B} h_e$. It is obvious that $s\le h$.
Later we shall allow $s$ to be much smaller than $h$ in order to get a more accurate approximation to the curved boundary of $\Omega$.

Note that each $e\in\calE_h^B$ has its two end points lying on $\partial\Omega$.
Denote by $\tilde{e}\subset\partial\Omega$ the (short) section intersected by these two end points, 
and by $M_e$ the crescent-shaped region surrounded by $e$ and $\tilde{e}$, as shown in Figure \ref{sigma}.
It is possible that part of $\partial\Omega$ is indeed flat. In this case $\tilde{e} = e$ and $M_e$ is just empty.
We use $\partial \Omega_h$ to denote the boundary of $\Omega_h$, which consists of all $e\in \calE_h^B$.
Recall that $\vtn$ is the unit outward normal vector on $\partial\Omega$, i.e., on curved edge $\tilde{e}$.
Denote by $\vn$ the unit outward normal vector on $\partial\Omega_h$, i.e., on straight edge $e\in\calE_h^B$.
Later we also need unit outward normal vectors on $\partial K$ for each $K\in \calT_h$.
Since $\partial K$ consists of straight edges, they are still denoted by $\vn$ and should not bring any ambiguity.

Denote by $\calT_h^B$ all mesh elements containing at least one edge in $\calE_h^B$.
It is possible that an element $K\in \calT_h^B$
contains multiple edges in $\calE_h^B$.

\smallskip
Following \cite{mulin2015dg, wgcurved}, we introduce a set of regularity assumptions on the partition $\calT_h$.

\textbf{A1.} There exists a positive constant $C_1$ such that each element $\e\in\calT_h$ is star-shaped with respect to a ball $\calB_{\e}\subset \e$ with radius $\rho_{\e} $ satisfying 
$$\rho_{\e}\geq C_1 h_{\e}.$$

\textbf{A2.} There exists a positive constant $C_2$ such that each element $\e\in\calT_h$ has at least one edge $e$ with length
$$h_e\geq C_2 h_{\e}.$$

\textbf{A3.} The mesh is quasi-uniform, that is, there exists a positive constant $C_3$ such that for every element $\e\in\calT_h$ one has
$$1\leq\dfrac{h}{h_{\e}}\leq C_3.$$

\textbf{A4.} There exist positive constants $C_4$ and $C_4'$ such that: for each edge $e\in\calE_h^B$ and corresponding $\we$,
there is a one-to-one map $\vsig : e\rightarrow\we$ defined in a local coordinate system $\hat{x}$-$\hat{y}$ (see Figure \ref{sigma}) by
$$
\vsig(\hat{x},0)=(\hat{x},\gamma(\hat{x})) \in\we \qquad \forall\,\hat{x}\in [0,h_e],
$$
where the function $\gamma$ satisfies
$$
\sup_{\hat{x}\in[0,h_e]}|\gamma(\hat{x})-\hat{x}|\leq C_4 s^2,
$$
and 
$$\sup_{\hat{x}\in[0,h_e]}|\vn(\hat{x},0)-\vtn(\hat{x},\gamma(\hat{x}))|\leq C_4' s.$$
\begin{figure}[H]
\begin{center}
\includegraphics[width=1.5in]{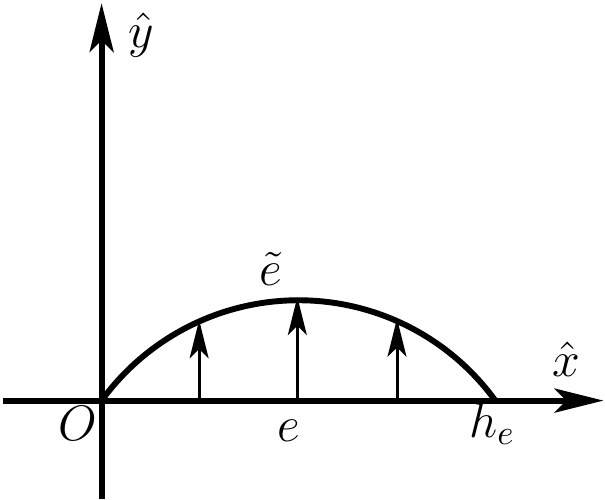}\\
\caption{The map $\vsig$.}
\label{sigma}
\end{center}
\end{figure}

\textbf{A5.} The boundary edges in $\calE_h^B$ are quasi-uniform, i.e., there exists a positive constant $C_5$ such that
$$s\leq C_5 \min_{e\in\calE_h^B}h_e.$$

\textbf{A6.} There exists a positive constant $C_6$ such that: for every $e\in\calE_h^B$ and the unique polygon $\e\in \calT_h^B$ having $e$ as an edge,
one can draw a triangle $P(e)$ with base $e$ and the center of ball $\calB_{\e}$ (defined in assumption \textbf{A1}) as apex;
the height $h_{\e,e}$ of $P(e)$ obviously satisfies $\rho_{\e}\leq h_{\e,e}\leq h_{\e}$;
denote by $\hat{\vx}$ the local coordinate system with $e$ and the height of $P(e)$ as abscissa axis and ordinate axis,
and define a linear transformation $F$ by
$$
F(\hat{\vx})=\left(
\begin{aligned}
h_e^{-1}&&0\\
0&&h^{-1}_{\e,e}\\
\end{aligned}
\right)\hat{\vx},
$$
then the triangle $\calO=F(P(e))$ satisfies
\begin{itemize}
\item$\text{diam}(\calO)=O(1)$;
\item The radius ratio of the circumscribed circle and the inscribed circle of $\calO$ is less than or equal to $C_6$.
\end{itemize}
We further assume that triangles $P(e)$ from all $e\in \partial \e \cap\partial\Omega_h$ form a finite overlapping,
in the sense that each point in $\e$ can only be covered by no more than $M$ such triangles.

\begin{remark}
  \textbf{A1-A2} are polygonal shape regularity conditions proposed in \cite{wang2014weak, mulin2015dg}.
  \textbf{A3} is the quasi-uniform assumption.
  These three are standard mesh assumptions used in WG discretizations.
  It is well-known that \textbf{A4} always holds as long as $\partial\Omega$ is piecewise $C^2$ continuous
  \cite{thomee1971polygonal, bertrand2014first}.
  Conditions \textbf{A5-A6} are extra requirements for the case of curved domains, which were first proposed in \cite{wgcurved}.
  We shall see later how these conditions are used in the analysis.
\end{remark} 
  
\begin{remark}
  Note that conditions \textbf{A1-A6} allow the existence of small edges,
  and hence allow the approximation of the curved boundary by multiple straight short edges.
\end{remark}

\begin{remark} \label{rem:A5A6}
  A less obvious but important consequence of \textbf{A5-A6} is that, each $\e\in\calT_h^B$ contains
  at most $O(\frac{h}{s})$ edges in $\calE_h^B$, i.e., boundary edges in $\calE_h^B$ should never be in a zigzag formation.
\end{remark}

For the sake of brevity, throughout this article, we write $x\lesssim y$ and $x\gtrsim y$ in place of $x\leq Cy$ and $x\geq Cy$, respectively,
for a positive constant $C$ independent of the discretization parameters.
Moreover, $x\approx y$ means that there exist positive constants $c,C$ such that $c y\leq x\leq C y$. When required, the constants will be written explicitly.

In the entire paper, we always assume that the mesh $\calT_h$ satisfies assumptions \textbf{A1-A6}.
This guarantees the trace inequality, the inverse inequality and a few other important lemmas on the curved domains.

\begin{lemma} \label{lem:trace}
(Trace inequality \cite{mulin2015dg, wgcurved, wang2014weak}) For $\e\in\calT_h$ and $v\in H^1(\e)$, one has
\begin{equation*}
\begin{aligned}
\|v\|_{0,\partial \e}^2\lesssim h^{-1}\|v\|_{0,\e}^2+h\|\nabla v\|_{0,\e}^2.
\end{aligned}
\end{equation*}
\end{lemma}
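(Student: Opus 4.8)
The plan is to use the classical divergence-theorem argument for star-shaped elements, drawing the two essential geometric facts from assumptions \textbf{A1} and \textbf{A3}. First I would fix the center $\vx_0$ of the inscribed ball $\calB_\e = B(\vx_0,\rho_\e)$ from \textbf{A1} and introduce the radial vector field $\vw(\vx) = \vx - \vx_0$, which satisfies $\nabla\cdot\vw = 2$ in $\bbR^2$ and $|\vw|\le h_\e$ throughout $\e$ since $\vx_0,\vx\in\e$. The geometric heart of the argument is the lower bound $\vw\cdot\vn \ge \rho_\e$ on every boundary edge of $\e$: because $\e$ is star-shaped with respect to the whole ball $\calB_\e$, no supporting line of a straight boundary edge can come closer than $\rho_\e$ to $\vx_0$, and along such an edge the (constant) quantity $\vw\cdot\vn$ equals exactly that distance.

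Next, I would apply the divergence theorem to the field $v^2\vw$. The identity is first established for $v\in C^\infty(\overline\e)$ and then extended to $v\in H^1(\e)$ by density of smooth functions on the Lipschitz element $\e$. Using $\nabla\cdot\vw=2$ this yields
\[
\int_{\partial\e} v^2\,(\vw\cdot\vn)\,\ds \;=\; \int_\e \left(2v^2 + 2v\,\vw\cdot\nabla v\right)\dx .
\]
Applying the lower bound $\vw\cdot\vn\ge\rho_\e$ on the left-hand boundary integral, and the pointwise bound $|\vw|\le h_\e$ together with Cauchy--Schwarz on the right-hand volume term, I obtain
\[
\rho_\e\,\|v\|_{0,\partial\e}^2 \;\le\; 2\|v\|_{0,\e}^2 + 2h_\e\,\|v\|_{0,\e}\,\|\nabla v\|_{0,\e}.
\]

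Finally, I would divide through by $\rho_\e$ and invoke $\rho_\e\ge C_1 h_\e$ from \textbf{A1} together with the quasi-uniformity $h_\e\approx h$ from \textbf{A3}, so that the element diameter is replaced by the global mesh size $h$; this gives $\|v\|_{0,\partial\e}^2 \lesssim h^{-1}\|v\|_{0,\e}^2 + \|v\|_{0,\e}\|\nabla v\|_{0,\e}$. A single application of Young's inequality to the cross term, $\|v\|_{0,\e}\|\nabla v\|_{0,\e}\le \tfrac12\big(h^{-1}\|v\|_{0,\e}^2+h\|\nabla v\|_{0,\e}^2\big)$, then absorbs it and produces the stated estimate.

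I expect the main obstacle to be the geometric lower bound $\vw\cdot\vn\ge\rho_\e$ rather than the analytic manipulations. One cannot merely appeal to convexity, since the elements are only assumed star-shaped; the clean way is to test star-shapedness against the specific ball point $\vb=\vx_0+\rho_\e\vn$ and a relative-interior point $\vp$ of the edge, observing that if $\vw\cdot\vn<\rho_\e$ there the segment $[\vb,\vp]$ would immediately exit $\e$ on the outward side of the edge, contradicting that $[\vb,\vp]\subset\overline\e$. This is precisely where \textbf{A1} is used in its full strength. Everything else---the divergence identity, the density argument, and the Cauchy--Schwarz/Young estimates---is routine once this bound, and the shape-regularity replacement $\rho_\e\approx h_\e\approx h$ via \textbf{A1} and \textbf{A3}, are in place.
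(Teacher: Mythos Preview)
Your argument is correct and is the standard divergence-theorem proof of the trace inequality on domains star-shaped with respect to a ball. Note, however, that the paper does not give its own proof of this lemma: it is stated as a known result with citations to \cite{mulin2015dg, wgcurved, wang2014weak}, so there is no in-paper argument to compare against. The radial-field computation you outline (together with the geometric lower bound $\vw\cdot\vn\ge\rho_\e$ coming from star-shapedness in \textbf{A1}, and the replacement $\rho_\e\approx h_\e\approx h$ from \textbf{A1}--\textbf{A3}) is precisely the argument one finds in those references.
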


Given a non-negative integer $j$, we denote by $P_j(\e)$ the space of polynomials with degree less than or equal to $j$ on $\e\in\calT_h$.
\begin{lemma} \label{inv}
(Inverse inequality \cite{mulin2015dg, wgcurved, wang2014weak}) For $\e\in\calT_h$ and $v\in P_j(\e)$, one has
\begin{equation*}
\begin{aligned}
\|\nabla v\|_{0,  \e}\lesssim h^{-1}\|v\|_{0,\e},
\end{aligned}
\end{equation*}
where the hidden constant in $\lesssim$ may depend on $j$ but not on $h$ or the shape of $\e$.
\end{lemma}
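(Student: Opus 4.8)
The plan is to establish the estimate by an affine scaling to a reference configuration of unit size, on which the inverse inequality follows from a \emph{shape-independent} norm equivalence. Only assumptions \textbf{A1} and \textbf{A3} are needed here; the remaining assumptions play no role. First I would fix $\e\in\calT_h$, let $x_0$ be the center of the ball $\calB_{\e}$ from \textbf{A1} (whose radius satisfies $\rho_{\e}\ge C_1 h_{\e}$), and introduce the affine change of variables $\hat{x}=(x-x_0)/h_{\e}$, setting $\hat{\e}=\{(x-x_0)/h_{\e}:x\in\e\}$ and $\hat{v}(\hat{x})=v(x)$. Since $x_0\in\e$ and $\mathrm{diam}(\e)=h_{\e}$, every $x\in\e$ obeys $|x-x_0|\le h_{\e}$, so $\hat{\e}\subset B(0,1)$; on the other hand $B(0,C_1)\subset B(0,\rho_{\e}/h_{\e})\subset\hat{\e}$. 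Thus the reference element is sandwiched between two fixed concentric balls,
\[
B(0,C_1)\subset\hat{\e}\subset B(0,1).
\]

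A direct change of variables in $\bbR^2$ gives $\|\nabla_x v\|_{0,\e}=\|\nabla_{\hat{x}}\hat{v}\|_{0,\hat{\e}}$ and $\|v\|_{0,\e}=h_{\e}\|\hat{v}\|_{0,\hat{\e}}$, so the claim reduces to the reference estimate $\|\nabla_{\hat{x}}\hat{v}\|_{0,\hat{\e}}\lesssim\|\hat{v}\|_{0,\hat{\e}}$ with a constant independent of the shape of $\hat{\e}$; the resulting factor $h_{\e}^{-1}$ is then converted to $h^{-1}$ via $h_{\e}^{-1}=(h/h_{\e})h^{-1}\le C_3 h^{-1}$, which is \textbf{A3}.

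The crux, and the step I expect to be the main obstacle, is that $\hat{\e}$ ranges over an infinite family of polygons (the number of edges and the shape both vary), so a bare compactness argument does not immediately deliver a uniform inverse constant for polygonal elements. The sandwiching is precisely what removes the shape dependence. Using $\hat{\e}\subset B(0,1)$ and $B(0,C_1)\subset\hat{\e}$,
\[
\|\nabla_{\hat{x}}\hat{v}\|_{0,\hat{\e}}\le\|\nabla_{\hat{x}}\hat{v}\|_{0,B(0,1)}\le C(j)\,\|\hat{v}\|_{0,B(0,C_1)}\le C(j)\,\|\hat{v}\|_{0,\hat{\e}}.
\]
The only substantive inequality is the middle one: on the fixed finite-dimensional space $P_j$, the continuous seminorm $w\mapsto\|\nabla w\|_{0,B(0,1)}$ is dominated by the norm $w\mapsto\|w\|_{0,B(0,C_1)}$ (a genuine norm on $P_j$, since a polynomial vanishing on an open ball vanishes identically), with constant $C(j)$ depending only on $j$ and the fixed radii $1$ and $C_1$. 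Because these two balls are fixed once $C_1$ is fixed, the constant carries no dependence on $\hat{\e}$, and scaling back yields the desired bound with hidden constant depending on $j$, $C_1$ and $C_3$ but not on $h$ or the shape of $\e$.
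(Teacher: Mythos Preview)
Your argument is correct: the affine scaling together with the sandwich $B(0,C_1)\subset\hat{\e}\subset B(0,1)$ and norm equivalence on the finite-dimensional space $P_j$ is exactly the standard route, and your use of \textbf{A3} to pass from $h_{\e}^{-1}$ to $h^{-1}$ is right. The paper does not supply its own proof of this lemma but simply cites it from \cite{mulin2015dg, wgcurved, wang2014weak}; the proof you give is essentially the one found in those references.
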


Note that if $\e\in \calT_h^B$, polynomials in $P_j(\e)$ can be naturally extend to any adjacent crescent-shaped region $M_e$, or vice versa.
\begin{lemma} \label{lem:inv}
  For $\e\in\calT_h^B$ and $v\in P_j(\e)$, one has
  $$
  \sum_{e\in\partial K\cap\partial\Omega_h} \|v\|_{0,M_e}^2 \lesssim h^{-1} s^2 \|v\|_{0,K}^2.
  $$
\end{lemma}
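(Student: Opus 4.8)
The plan is to exploit the anisotropic scaling map $F$ from assumption \textbf{A6} to transport the problem onto a fixed family of non-degenerate reference triangles, establish the estimate there by a finite-dimensional norm equivalence, and then recover the claim by summing over the boundary edges via the finite-overlapping property. The essential geometric inputs are that each crescent $M_e$ is a thin strip of height $O(s^2)$ over $e$ (assumption \textbf{A4}), while the adjacent triangle $P(e)\subset K$ has height $h_{\e,e}\approx h$ (assumptions \textbf{A1} and \textbf{A3}).

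First I would fix $e\in\partial K\cap\partial\Omega_h$ together with the triangle $P(e)$, the map $F$, and the reference triangle $\calO=F(P(e))$ supplied by \textbf{A6}. Writing $\hat v=v\circ F^{-1}$, the function $\hat v$ is again a polynomial of degree at most $j$, and I set $\hat M_e=F(M_e)$. By \textbf{A4} the crescent $M_e$ is contained in a strip of height at most $C_4 s^2$ over $e$; since $F$ scales the normal direction by $h_{\e,e}^{-1}$ and $h_{\e,e}\approx h$, the image $\hat M_e$ is a strip of height $\hat\delta_e\lesssim s^2/h$ sitting on the base $F(e)$ of $\calO$. In particular $\hat\delta_e\lesssim s^2/h\le h$, so $\hat M_e$ lies in a fixed bounded neighborhood of $\calO$.

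Next I would prove the reference estimate $\|\hat v\|_{0,\hat M_e}^2\lesssim \hat\delta_e\,\|\hat v\|_{0,\calO}^2$. Because $\calO$ has diameter $O(1)$ and bounded radius ratio (\textbf{A6}), it ranges over a compact family of non-degenerate triangles, so on the finite-dimensional space $P_j$ the $L^\infty$ norm over any fixed bounded neighborhood is controlled by $\|\cdot\|_{0,\calO}$ with a uniform constant; bounding $\|\hat v\|_{0,\hat M_e}^2\le |\hat M_e|\,\|\hat v\|_{L^\infty(\hat M_e)}^2$ together with $|\hat M_e|\lesssim \hat\delta_e$ then gives the claim. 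Changing variables back through $F$, whose Jacobian $h_e h_{\e,e}$ cancels on both sides, yields
\begin{equation*}
\|v\|_{0,M_e}^2\lesssim \hat\delta_e\,\|v\|_{0,P(e)}^2\lesssim \frac{s^2}{h}\,\|v\|_{0,P(e)}^2 .
\end{equation*}

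Finally I would sum over all $e\in\partial K\cap\partial\Omega_h$. The naive bound $\|v\|_{0,P(e)}\le\|v\|_{0,K}$ is too wasteful, since by Remark \ref{rem:A5A6} there may be $O(h/s)$ such edges, costing an extra factor $h/s$; instead I would invoke the finite-overlapping property of the triangles $P(e)$ from \textbf{A6}, which gives $\sum_{e}\|v\|_{0,P(e)}^2\le M\,\|v\|_{0,K}^2$. Combining this with the per-edge estimate produces $\sum_e\|v\|_{0,M_e}^2\lesssim (s^2/h)\,\|v\|_{0,K}^2$, as desired. The main obstacle is precisely this balancing: obtaining the sharp power $h^{-1}s^2$ rather than $s$ forces one to use the finite overlap of the $P(e)$ instead of summing over the common element $K$, and keeping the norm-equivalence constant uniform across all boundary edges (whose lengths $h_e$ may be much smaller than $h$) requires passing through the anisotropically rescaled triangles $\calO$ rather than working on the thin strips $M_e$ directly.
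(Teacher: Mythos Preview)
Your argument is correct but takes a genuinely different route from the paper. The paper's proof is more elementary: it encloses all the crescents in a slightly enlarged ball $\calB_K'\supset K$, uses $|M_e|\lesssim s^3$ to get $\|v\|_{0,M_e}^2\lesssim s^3\|v\|_{L^\infty(\calB_K')}^2$, sums over the $O(h/s)$ boundary edges of $K$ (Remark~\ref{rem:A5A6}), and closes with the polynomial inverse estimate $\|v\|_{L^\infty(\calB_K')}^2\lesssim h^{-2}\|v\|_{0,K}^2$; the factors combine as $(h/s)\cdot s^3\cdot h^{-2}=h^{-1}s^2$. Your approach instead localizes per edge via the anisotropic map $F$ from \textbf{A6}, obtains the sharp per-edge bound $\|v\|_{0,M_e}^2\lesssim (s^2/h)\|v\|_{0,P(e)}^2$ by reference-domain norm equivalence, and then sums using the finite-overlapping clause of \textbf{A6} rather than the edge count. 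The paper's argument is shorter and touches only \textbf{A1}, \textbf{A4}, and the edge-count consequence of \textbf{A5}--\textbf{A6}; yours exercises the full machinery of \textbf{A6} (the map $F$ and the overlap bound $M$) and yields a slightly more localized intermediate estimate, at the cost of checking uniformity of the norm equivalence over the family of reference triangles $\calO$. Either way the constant depends on $j$ through the finite-dimensional equivalence, matching the stated result.
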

\begin{proof}
  Take the center of $\calB_K$, as defined in Assumption \textbf{A1}, and draw a ball with radius $h_K+C_4s^2$.
  Denote the larger ball by $\calB_K'$. Then, using Assumption \textbf{A4} and the fact that each $\e\in\calT_h^B$
  contains at most $O(\frac{h}{s})$ edges in $\calE_h^B$ (see Remark \ref{rem:A5A6}), we have
  $$
    \sum_{e\in\partial K\cap\partial\Omega_h} \|v\|_{0,M_e}^2 \lesssim \sum_{e\in\partial K\cap\partial\Omega_h} s^3 \|v\|_{L^\infty(\calB_K')}^2 \\
    \lesssim \frac{h}{s} s^3 \|v\|_{L^\infty(\calB_K')}^2.
    $$
    Finally, using Assumption \textbf{A1}, a scaling argument and the fact that $s\le h \le O(1)$, we have
    $$
    \|v\|_{L^\infty(\calB_K')}^2 \lesssim \|v\|_{L^\infty(\calB_K)}^2 \lesssim h^{-2} \|v\|_{0,\calB_K}^2 \le h^{-2} \|v\|_{0,K}^2.
    $$
    This completes the proof of the lemma.
\end{proof}

Following \cite{bramble1994robust}, we have
\begin{lemma}\label{lem:OmegaOmegah}
  For $v\in H^1(\Omega)$, one has
\begin{equation}\label{omegaomegah}
\sum_{e\in\calE_h^B} \|v\|_{0, M_e}^2  \lesssim s^2 \|v\|_{1,\Omega}^2.
\end{equation}
Moreover, when $v\in H_0^1(\Omega)$, for each $e\in\calE_h^B$ one has
\begin{equation}\label{omegaomegah1}
     \|v\|_{0, M_e}  \lesssim s^2\|v\|_{1,M_e}, \qquad\qquad
     \|v\|_{0,e} \lesssim s \|v\|_{1,M_e}.
\end{equation}
\end{lemma}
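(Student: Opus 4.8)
The plan is to reduce all three estimates to a one-dimensional fundamental-theorem-of-calculus argument carried out in the local $\hat{x}$--$\hat{y}$ frame attached to each boundary edge in Assumption \textbf{A4}, in the spirit of the tubular-neighbourhood estimates of \cite{bramble1994robust}. Fix $e\in\calE_h^B$ and work in that frame, in which $e$ is the segment $\{(\hat{x},0):\hat{x}\in[0,h_e]\}$ and $\we$ is the graph $\{(\hat{x},\gamma(\hat{x}))\}$; the crescent $M_e$ is then the region swept vertically between the two, and the bound on $\gamma$ in \textbf{A4} confines $M_e$ to a strip of height $\lesssim s^2$ about $e$. Since this frame is a rigid motion of the physical one, the area element is unchanged, and since $\partial\Omega$ is $C^2$ (so $\gamma'$ is bounded), arc length along $\we$ is comparable to $\mathrm{d}\hat{x}$. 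By density it suffices to argue for smooth $v$.

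For the first estimate I would write, for fixed $\hat{x}$ and $t$ between $0$ and $\gamma(\hat{x})$,
\[
v(\hat{x},t)=v(\hat{x},\gamma(\hat{x}))-\int_t^{\gamma(\hat{x})}\partial_{\hat{y}}v(\hat{x},\tau)\,\mathrm{d}\tau ,
\]
so that $|v(\hat{x},t)|^2\lesssim |v(\hat{x},\gamma(\hat{x}))|^2+s^2\int_0^{\gamma(\hat{x})}|\nabla v|^2\,\mathrm{d}\tau$ by Cauchy--Schwarz and the height bound. Integrating in $t$ over an interval of length $\lesssim s^2$ and then in $\hat{x}$ gives
\[
\|v\|_{0,M_e}^2\lesssim s^2\|v\|_{0,\we}^2+s^4\|\nabla v\|_{0,M_e}^2 .
\]
Summing over $e\in\calE_h^B$, using that the $\we$ tile $\partial\Omega$ and the $M_e$ are essentially disjoint, and invoking the trace theorem $\|v\|_{0,\partial\Omega}^2\lesssim\|v\|_{1,\Omega}^2$ together with $s\le 1$, yields \eqref{omegaomegah}.

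For \eqref{omegaomegah1} the key simplification is that $v\in H_0^1(\Omega)$ vanishes on $\we\subset\partial\Omega$, so the boundary term drops out: the same identity with $v(\hat{x},\gamma(\hat{x}))=0$ gives $\|v\|_{0,M_e}^2\lesssim s^4\|\nabla v\|_{0,M_e}^2$, which is the first inequality after taking square roots and bounding $\|\nabla v\|_{0,M_e}\le\|v\|_{1,M_e}$. For the edge estimate I would instead evaluate at $t=0$, writing $v(\hat{x},0)=-\int_0^{\gamma(\hat{x})}\partial_{\hat{y}}v(\hat{x},\tau)\,\mathrm{d}\tau$, so that $|v(\hat{x},0)|^2\lesssim s^2\int_0^{\gamma(\hat{x})}|\nabla v|^2\,\mathrm{d}\tau$; integrating in $\hat{x}$ over $[0,h_e]$ and recognizing the resulting double integral as $\|\nabla v\|_{0,M_e}^2$ gives $\|v\|_{0,e}^2\lesssim s^2\|\nabla v\|_{0,M_e}^2$, the second inequality.

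The parts requiring the most care are geometric rather than analytic: justifying that the vertical fibres in the \textbf{A4} frame sweep out exactly $M_e$ with unit Jacobian, that the boundedness of $\gamma'$ lets the $\hat{x}$-integral of $|v(\hat{x},\gamma(\hat{x}))|^2$ control $\|v\|_{0,\we}^2$, and---for the summation in \eqref{omegaomegah}---that the crescents overlap only finitely. I expect the main obstacle to be keeping the constants uniform in $e$; all edge-wise bounds should depend only on the structural constants of \textbf{A4} and the $C^2$ character of $\partial\Omega$, so no hidden dependence on $h_e$ or on the shape of the adjacent element enters.
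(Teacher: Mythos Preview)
Your proposal is correct and follows essentially the same route as the paper. The paper's proof simply quotes the pointwise-to-local inequality $\|v\|_{0,M_e}^2\lesssim s^2\|v\|_{0,\tilde e}^2+s^4\|v\|_{1,M_e}^2$ and the edge bound $\|v\|_{0,e}\lesssim s\|v\|_{1,M_e}$ directly from \cite{bramble1994robust} (their (2.9) and (2.11)), then sums and applies the global trace inequality on $\Omega$; you instead re-derive those two cited estimates via the one-dimensional fundamental-theorem-of-calculus argument in the local $\hat{x}$--$\hat{y}$ frame, which is exactly how Bramble--King obtain them, so the two proofs coincide once the citation is unpacked.
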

\begin{proof}
Inequality (2.9) in \cite{bramble1994robust} states that
\begin{equation*}
\begin{aligned}
\|v\|_{0, M_e} ^2 \lesssim s^2\|v\|_{0,\we}^2+s^4\|v\|_{1,M_e}^2.
\end{aligned}
\end{equation*}
Taking the summation and applying the trace inequality on $\Omega$, we get (\ref{omegaomegah}).
For $v\in H_0^1(\Omega)$, the first inequality in (\ref{omegaomegah1}) follows immediately from $\|v\|_{0,\partial\Omega}=0$.
The second inequality in (\ref{omegaomegah1}) is exactly inequality (2.11) in \cite{bramble1994robust}.
\end{proof}

\medskip

The following version of the Bramble-Hilbert lemma has been proved in Chapter 4 of  \cite{brenner2008mathematical}.
\begin{lemma} \label{bh}
  (Bramble-Hilbert) For $\e\in\calT_h$ and $v\in H^m(\e)$, there exists an averaged Taylor polynomial $v^m_{\e}$ of
  degree less than or equal to $m-1$ satisfying
$$
\begin{aligned}
|v-v^m_{\e}|_{s,  \e}\lesssim h_{\e}^{m-s}|v|_{m,\e},\quad \text{for}\ s\ =\ 0, \ldots, m. 
\end{aligned}
$$
where the hidden constant in $\lesssim$ may depend on $m$ but not on $h$ or the shape of $\e$.
\end{lemma}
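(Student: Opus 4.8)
The plan is to take $v^m_\e$ to be the averaged Taylor polynomial of $v$ over the ball $\calB_\e$ furnished by Assumption \textbf{A1}, and then follow the standard construction of \cite{brenner2008mathematical}. Concretely, fixing a cutoff $\phi\in C_0^\infty(\calB_\e)$ with $\int_{\calB_\e}\phi\,\mathrm{d}y=1$ and writing $T^m v(y,x)$ for the degree-$(m-1)$ Taylor expansion of $v$ at $y$ evaluated at $x$, one sets
\begin{equation*}
v^m_\e(x)=\int_{\calB_\e} T^m v(y,x)\,\phi(y)\,\mathrm{d}y,
\end{equation*}
which is a polynomial of degree at most $m-1$ in $x$. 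The role of Assumption \textbf{A1} is that $\e$ is star-shaped with respect to $\calB_\e$ and the chunkiness parameter $h_\e/\rho_\e\le C_1^{-1}$ is uniformly bounded, so every constant produced below depends only on $m$ and $C_1$, never on the shape of $\e$.

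First I would record the commutation identity $D^\beta v^m_\e=(D^\beta v)^{m-|\beta|}_\e$ for every multi-index with $|\beta|\le m-1$, which follows from differentiating under the integral sign and using the derivative identities for the Taylor expansion $T^m$. This reduces the estimate for a general seminorm order $s$ to the single case $s=0$: for $|\beta|=s$ one has $D^\beta(v-v^m_\e)=D^\beta v-(D^\beta v)^{m-s}_\e$, so it suffices to control $\|w-w^{m-s}_\e\|_{0,\e}$ by $h_\e^{m-s}|w|_{m-s,\e}$ with $w=D^\beta v$, and then sum over all $|\beta|=s$.

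Next I would establish the remainder representation. The Taylor remainder admits an integral form expressing $v(x)-v^m_\e(x)$ as a sum over $|\alpha|=m$ of integrals of $D^\alpha v$ against a kernel that, after averaging in $y$, is a weakly singular kernel of Riesz-potential type. On a domain of unit diameter such an operator is bounded from $L^2$ into $L^2$ by a Schur/Young estimate, with operator norm depending only on the chunkiness parameter; this is exactly the content of the Sobolev-representation lemmas of \cite[Ch.~4]{brenner2008mathematical}. Bounding the $L^2$ norm of the remainder by the $L^2$ norm of the top-order derivatives then yields the estimate with all length scales normalized to one.

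The final and genuinely delicate step is the scaling that restores the factor $h_\e^{m-s}$. I would pull $\e$ back to a reference configuration $\hat\e$ of unit diameter via the dilation $x\mapsto x/h_\e$, which keeps the ratio $h_\e/\rho_\e$—and hence the chunkiness—fixed by Assumption \textbf{A1}; apply the normalized estimate on $\hat\e$; and then track how the Jacobian factors from the change of variables weight each seminorm, producing the powers of $h_\e$. The main obstacle is precisely this bookkeeping: one must verify that the hidden constant absorbs only the chunkiness parameter and the polynomial degree, so that the star-shapedness of Assumption \textbf{A1}, rather than any finer regularity of $\e$, is what makes the constant uniform over the whole family $\calT_h$.
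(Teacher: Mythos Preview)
Your outline is correct and is exactly the argument the paper relies on: the paper does not give its own proof but simply cites Chapter~4 of \cite{brenner2008mathematical}, and what you have sketched---averaged Taylor polynomial over the ball $\calB_\e$ from Assumption~\textbf{A1}, the commutation identity $D^\beta v^m_\e=(D^\beta v)^{m-|\beta|}_\e$, the Riesz-potential remainder bound on a unit-diameter domain, and the dilation to recover the $h_\e^{m-s}$ factor with constants depending only on $m$ and the chunkiness $h_\e/\rho_\e\le C_1^{-1}$---is precisely that construction. There is nothing to compare: your approach and the paper's (by reference) coincide.
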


\section{Weak Galerkin discretization}{\label{section3}}

Now we introduce the WG-MFEM discretization for system \eqref{primalproblem}-\eqref{primalcondition}
in a form presented in \cite{chen2016weak}.
On each $\e\in \calT_h$, denote by $P_j(\e_0)$ or $P_j(\e)$ the set of polynomials
with degree less than or equal to $j$. 
Likewise, on each $e \in \calE_h$, let $P_j(e)$ be the set of polynomials of degree
no more than $j$. We define the weak Galerkin spaces
$$
\begin{aligned}
\bm{W}_h =\big\{&\vv = \{\vv_0, \vv_b\} \in [L^2(\Omega_h)]^2 \times [L^2(\calE_h)]^2 \text{ such that} &\\
&\vv_0|_{\e_0} \in [P_{\alpha} (\e_0)]^2 \text{ for } \e \in \calT_h,&\\
&\vv_b|_e = v_b\vn_e \text{, where } v_b \in P_\beta (e) \text{, for } e \in \calE_h\big\},&\\
\vV_h =\big\{&\vv\in \bm{W}_h \textrm{ satisfying } \vv_b|_e = 0 \text{ for } e \in \calE_h^B\big\},&
\end{aligned}
$$
where  $\alpha$ and $\beta$ are given non-negative integers and $\vn_e$ is a prescribed normal direction on each edge
$e\in\calE_h$. 
A key feature of the weak Galerkin discretization is that a function $\vv\in \vV_h$ takes separate
values $\vv_0$ on the interior of each $\e$ and $\vv_b$ on edges.
Define
$$
\Psi_h = \big\{q \in L^2_0(\Omega_h) \text{ such that }  q|_\e \in P_{\sigma} (\e)\ \text{for}\ \e \in \calT_h \big\},
$$
where $\sigma$ is a given non-negative integer. Moreover, assume that
\begin{equation} \label{eq:alphabetagamma}
\beta-1 \leq \sigma \leq \beta = \alpha.
\end{equation}
Condition \eqref{eq:alphabetagamma} is imposed to make sure that the weak Galerkin discretization has desired stability and approximation properties,
as will become clear in the analysis to be given later.

On each $\e\in\calT_h$, define the weak divergence $\nabla_w\cdot\vv\in P_\beta(\e)$
for $\vv\in\vV_h$ by 
$$
(\nabla_w\cdot\vv,q)_{\e}= -(\vv_0,\nabla q)_{\e}+\langle\vv_b\cdot\vn,q\rangle_{\partial\e}\qquad \forall\, q\in P_\beta(\e).
$$

Define the discrete bilinear forms $a_h:\: \vV_h\times\vV_h\to \bbR$ and $b_h:\: \vV_h\times\Psi_h\to \bbR$ by
$$
\begin{aligned}
a_h(\vu, \vv) &= \left( \vu_0, \vv_0\right)_{\Omega_h}+\rho \sum_{K \in \calT_h} h_K^{-1}\left\langle\left(\vu_0-\vu_b\right) \cdot \vn,\left(\vv_0-\vv_b\right) \cdot \vn\right\rangle_{\partial K},&\\
b_h(\vv, q) &= -(\nabla_w\cdot \vv, q)_{\Omega_h},&\\
\end{aligned}
$$
in which $\rho$ is a positive constant. 
\begin{remark}
  The second part in $a_h(\cdot,\cdot)$ is a stabilization term. However, different from the discontinuous Galerkin method,
  the stabilization parameter $\rho$ in the weak Galerkin discretization can be chosen arbitrarily without affecting the approximation results.
  In practice, one can simply set $\rho=1$.
  For convenience, we denote the stabilization part by
 \begin{equation}  \label{eq:stabterm}
   c(\vu, \vv) = \rho \sum_{K \in \calT_h} h_K^{-1}\left\langle\left(\vu_0-\vu_b\right) \cdot \vn,\left(\vv_0-\vv_b\right) \cdot \vn\right\rangle_{\partial K}.
   \end{equation}
\end{remark}

The weak Galerkin formulation for system \eqref{primalproblem}-\eqref{primalcondition} can now be written as follows: {\it Find
$\vu_h \in \vV_h$ and $p_h \in \Psi_h$ such that}
\begin{equation}
\begin{cases}
a_h(\vu_h, \vv) + b_h(\vv, p_h)=0\quad &\forall\, \vv\in \vV_h,\\
b_h(\vu_h, q) =-(g, q)_{\Omega_h}\quad &\forall\, q \in \Psi_h. \label{dispro}
\end{cases}
\end{equation}
\begin{remark} \label{rem:discretecompat}
  Recall that for the continuous problem \eqref{primalproblem}-\eqref{primalcondition} to be well-posed, a compatibility condition \eqref{eq:compat} is necessary
  which requires $g$ to be mean-value free on $\Omega$.
  One may wonder whether the discrete problem requires a similar compatibility condition or not.
  We point out that the compatibility mechanism works differently for finite-dimensional problems
  \wangI{and the key is to have
    \begin{equation} \label{eq:bhcompatibility}
      b_h(\vv, 1)=0\qquad \forall\, \vv\in \vV_h,
    \end{equation}
  which is obviously true in our case according to the definitions of $b_h(\cdot,\cdot)$ and $\nabla_w\cdot$}.
  Note that $(1,q)_{\Omega_h}=0$ for all $q \in \Psi_h$. Hence the second equation in \eqref{dispro} is the same as
  \begin{equation} \label{eq:discretecompat}
  b_h(\vu_h, q) =-(\tilde{g}, q)_{\Omega_h}\quad \forall q \in \Psi_h,
  \end{equation}
  where $\tilde{g} = g-\frac{1}{|\Omega_h|}\int_{\Omega_h} g\dx$ is mean-value free on $\Omega_h$.
  Equation \eqref{eq:discretecompat} is now `compatible' in the traditional sense as it also holds for $q\equiv const$.
  From the theoretical point of view, there is no difference should one choose to use \eqref{dispro} or to replace its second equation by \eqref{eq:discretecompat}.

  For finite-dimensional problems, what the `compatibility' condition may actually affect is the implementation procedure.
  In practice, it is not convenient to compute a basis for $\Psi_h$, which needs to be mean-value free on $\Omega_h$.
  One usually drops the mean-value free condition while at the same time expecting the resulting stiffness matrix $M$ to have a rank $1$ deficiency
  \wangI{(under the discrete inf-sup condition to be proved later)}.
  Now, if the `compatible' equation \eqref{eq:discretecompat} is used in the implementation, it guarantees that the right-hand side vector of the linear system
  is orthogonal to $ker(M^T)$. Hence the system is solvable, i.e., compatible.
  Using elementary linear algebra, one immediately sees that even if the `non-compatible' second equation of \eqref{dispro} is used in the implementation,
  it just yields a right-hand side vector not orthogonal to $ker(M^T)$.
  In this case, we know that the kernel consists of exactly $q\equiv const$. Therefore a pure algebraic post-process after assembling the entire linear system,
  i.e., making the right-hand vector orthogonal to $ker(M^T)$, can easily \wangI{resolve} this issue and render the linear system `compatible'.

  Due to the above explanation, we do not need to worry about the `compatibility' of the discrete system
  either theoretically or in the implementation, \wangI{as long as \eqref{eq:bhcompatibility} holds}.
\end{remark}

To analyze the well-posedness and approximation properties of the weak Galerkin discretization (\ref{dispro}),
we first define the norm on $\vV_h$ as follows:
$$
\begin{aligned}
\|\vv\|_{\vV_h}=&\left(\| \vv_0\|_{\Omega_h}^{2}+\rho \sum_{\e \in \calT_h} h_{\e}^{-1}\left\|\left(\vv_0-\vv_b\right) \cdot \vn\right\|_{\partial \e}^{2}\right)^{1 / 2}\quad \forall\, \vv\in\vV_h.
\end{aligned}
$$
It is obvious that $a_h(\vv,\vv)=\|\vv\|_{\vV_h}^2$ for all $\vv\in\vV_h$.
We shall show that $\|\cdot\|_{\bm{V_h}}$ is a well-defined norm. 
Indeed, if $\|\vv\|_{\vV_h}=0$ for some $\vv\in\vV_h$; i.e.,
$$
\left( \vv_0, \vv_0\right)_{\Omega_h}+\rho \sum_{K \in \calT_h} h_K^{-1}\left\langle\left(\vv_0-\vv_b\right) \cdot \vn,\left(\vv_0-\vv_b\right) \cdot \vn\right\rangle_{\partial K}=0,$$
one has $\vv_0\equiv\vzero$ on each element $\e$ and $(\vv_0-\vv_b)\cdot\vn=0$ on each edge $e\in\calE_h$.
This leads to $0=\vv_b\cdot\vn= v_b\vn_e\cdot\vn$  and consequently $\vv_b\equiv\vzero$ on each edge $e\in\calE_h$.
Therefore $\|\cdot\|_{\bm{V_h}}$ is a norm on $\vV_h$.

On each $K \in \calT_h$, denote by $\vQ_0,\ \bbQ_h$ and $\pi_h$ the $L^{2}$ projections
onto $\left[P_{\alpha}(K)\right]^{2}$, $P_{\sigma}(\e)$ and $P_{\beta}(K)$, respectively.
On each $e\in \calE_h$, denote by $Q_b$ the $L^{2}$ projection onto $P_{\beta}(e)$.
On the entire $\Omega_h$, we use the same notation to denote the combination of the above local projections.
For $\vv\in [H^1(\Omega_h)]^2$, define $\vQ_b\vv=(Q_b(\vv\cdot\vn_e))\vn_e$ for each $e\in \calE_h$.
Then we define a projection $\vQ_h :\: [H^1(\Omega_h)]^2\to \bm{W}_h$ by 
$$
  \vQ_h\vv= \{\vQ_0\vv, \vQ_b\vv\}.
$$
One has 
\begin{lemma}\label{Projection} (Lemma 3.12 in \cite{chen2016weak}) Assume that $\alpha$ and $\beta$ satisfy \eqref{eq:alphabetagamma}, the following commutative property holds:
\begin{equation} \label{eq:commutative}
\begin{aligned}
\nabla_{w} \cdot\left(\vQ_h \vv\right) &=\pi_h(\nabla \cdot \vv) \qquad \forall\, \vv \in [H^1(\Omega_h)]^2.
\end{aligned}
\end{equation}
\end{lemma}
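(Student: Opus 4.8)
The plan is to verify the identity elementwise. Since both $\nabla_w\cdot(\vQ_h\vv)$ and $\pi_h(\nabla\cdot\vv)$ belong to $P_\beta(K)$ on each $K\in\calT_h$, it suffices to show that the two produce the same $L^2(K)$ inner product against every test polynomial $q\in P_\beta(K)$. The first step is simply to unfold the definition of the weak divergence applied to $\vQ_h\vv=\{\vQ_0\vv,\vQ_b\vv\}$, which gives
\[
(\nabla_w\cdot(\vQ_h\vv),q)_K=-(\vQ_0\vv,\nabla q)_K+\langle(\vQ_b\vv)\cdot\vn,q\rangle_{\partial K}.
\]
The whole proof then amounts to stripping the projections off the right-hand side using their defining orthogonality and reassembling the classical Green's formula.

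For the volume term I would invoke the degree constraint $\beta=\alpha$ from \eqref{eq:alphabetagamma}: since $q\in P_\beta(K)$ we have $\nabla q\in[P_{\alpha-1}(K)]^2\subset[P_\alpha(K)]^2$, so the $L^2$ projection property of $\vQ_0$ onto $[P_\alpha(K)]^2$ yields $(\vQ_0\vv,\nabla q)_K=(\vv,\nabla q)_K$. This is precisely where the compatibility of the polynomial degrees is needed, and it is worth flagging in the write-up. For the boundary term I would argue edge by edge: on an edge $e\subset\partial K$ one has $(\vQ_b\vv)\cdot\vn=Q_b(\vv\cdot\vn_e)\,(\vn_e\cdot\vn)$, and since the straight-edge restriction satisfies $q|_e\in P_\beta(e)$ while $Q_b$ projects onto $P_\beta(e)$, orthogonality gives $\langle Q_b(\vv\cdot\vn_e),q\rangle_e=\langle\vv\cdot\vn_e,q\rangle_e$.

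The only delicate point — and the one I expect to require genuine care rather than routine manipulation — is reconciling the prescribed edge normal $\vn_e$ with the element outward normal $\vn$. On each edge these agree up to sign, $\vn=\pm\vn_e$, so $(\vn_e\cdot\vn)\,\vn_e=\vn$ and the sign cancels cleanly; summing the edge contributions gives $\langle(\vQ_b\vv)\cdot\vn,q\rangle_{\partial K}=\langle\vv\cdot\vn,q\rangle_{\partial K}$. One must be confident this sign bookkeeping is consistent with how $\vn_e$ enters the definition of $\vQ_b$, otherwise the boundary term would carry a spurious sign.

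Combining the two reductions yields
\[
(\nabla_w\cdot(\vQ_h\vv),q)_K=-(\vv,\nabla q)_K+\langle\vv\cdot\vn,q\rangle_{\partial K},
\]
which, by Green's formula for $\vv\in[H^1(K)]^2$ and $q\in P_\beta(K)$, equals $(\nabla\cdot\vv,q)_K$. Finally, because $\pi_h$ is the $L^2$ projection onto $P_\beta(K)$ and $q\in P_\beta(K)$, I would rewrite this as $(\pi_h(\nabla\cdot\vv),q)_K$. Since $q\in P_\beta(K)$ is arbitrary and both sides lie in $P_\beta(K)$, the identity $\nabla_w\cdot(\vQ_h\vv)=\pi_h(\nabla\cdot\vv)$ follows on each $K$, and hence on all of $\Omega_h$.
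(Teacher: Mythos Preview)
Your argument is correct and is exactly the standard proof of this commutative property. Note, however, that the paper does not supply its own proof: the lemma is simply cited as Lemma~3.12 of \cite{chen2016weak}. Your write-up is essentially what one finds there --- unfold the weak divergence, use $\beta=\alpha$ to strip $\vQ_0$ from the volume term, use $q|_e\in P_\beta(e)$ to strip $Q_b$ from the edge term, and close with Green's formula and the definition of $\pi_h$. The sign bookkeeping for $\vn_e$ versus $\vn$ is handled correctly.
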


The above commutative property is one of the key features of the weak Galerkin discretization
and has played an important role in its theoretical analysis \cite{chen2016weak, wang2014weak}.
However, the situation is quite different when $\Omega$ is a curved domain,
because for $\vv\in [H^1(\Omega)]^2$ satisfying $\vv\cdot\vtn|_{\partial\Omega} = 0$
one does not get $\vQ_h\vv \cdot \vn|_{\partial\Omega_h}=0$. 
The projection $\vQ_h$ only maps $\vv$ into $\bm{W}_h$, but not $\vV_h$.

To partly remedy this, we introduce 
$$
\vtQ_b\vv = 
\begin{cases}
  \vzero \quad & \textrm{on }e\in \calE_h^B,\\
  \vQ_b \vv & \textrm{on all other edges},
\end{cases}
$$
and define a modified projection $\vtQ_h:\: [H^1(\Omega_h)]^2\to \vV_h$ by
$$
\vtQ_h \vv= \{\vQ_0\vv, \vtQ_b\vv\}.
$$
Obviously, the price to pay is that $\vtQ_h$ no longer satisfies the highly desired commutative property \eqref{eq:commutative}.
This brings a lot trouble to the theoretical analysis of the weak Galerkin formulation \eqref{dispro}, as we will see later.

Using the Bramble-Hilbert lemma \ref{bh}, we easily get the following result:
\begin{lemma}\label{Qh}
Let $\vu\in [H^{r+1}(\Omega)]^2$ and $p\in H^{t+1}(\Omega)$, where $0\leq r\leq \alpha$ and $0\leq t\leq\sigma$. Then
\begin{equation*}
\begin{aligned}
\sum_{\e\in\calT_h}\|\vu-\vQ_0\vu\|^2_{ \e}+\sum_{\e\in\calT_h}h_{\e}^2\|\nabla(\vu-\vQ_0\vu)\|^2_{ \e}\lesssim h^{2(r+1)}|\vu|^2_{r+1,\Omega_h},\\
\sum_{\e\in\calT_h}\|p-\bbQ_hp\|^2_{ \e}+\sum_{\e\in\calT_h}h_{\e}^2\|\nabla(p-\bbQ_hp)\|^2_{ \e}\lesssim h^{2(t+1)}|p|^2_{t+1,\Omega_h}.
\end{aligned}
\end{equation*}
\end{lemma}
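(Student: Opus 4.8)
The plan is to prove both estimates elementwise and then sum, exploiting the $L^2$-optimality of the projections together with the Bramble--Hilbert lemma \ref{bh} and the inverse inequality \ref{inv}. Since $\Omega$ is convex we have $\Omega_h\subset\Omega$, so for each $\e\in\calT_h$ the restriction $\vu|_\e$ lies in $[H^{r+1}(\e)]^2$, the seminorm $|\vu|_{r+1,\e}$ is well defined, and summing the local bounds reproduces $|\vu|_{r+1,\Omega_h}^2=\sum_{\e}|\vu|_{r+1,\e}^2$ on the right-hand side. I will treat only the estimate for $\vu$; the one for $p$ is identical after replacing $\vQ_0$ by $\bbQ_h$, $\alpha$ by $\sigma$ and $r$ by $t$, using $t\le\sigma$.

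First I would fix $\e\in\calT_h$ and let $\vu^{r+1}_\e$ be the (componentwise) averaged Taylor polynomial of $\vu$ supplied by Lemma \ref{bh} with $m=r+1$; since its degree is at most $r\le\alpha$, it belongs to $[P_\alpha(\e)]^2$. Because $\vQ_0$ is the best $[P_\alpha(\e)]^2$-approximation in $\|\cdot\|_{0,\e}$, the $s=0$ case of Lemma \ref{bh} gives
\begin{equation*}
\|\vu-\vQ_0\vu\|_{0,\e}\le\|\vu-\vu^{r+1}_\e\|_{0,\e}\lesssim h_\e^{r+1}|\vu|_{r+1,\e},
\end{equation*}
which settles the first term in the estimate. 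For the gradient term I would split $\nabla(\vu-\vQ_0\vu)=\nabla(\vu-\vu^{r+1}_\e)+\nabla(\vu^{r+1}_\e-\vQ_0\vu)$. The first piece is controlled by the $s=1$ case of Lemma \ref{bh}, namely $\|\nabla(\vu-\vu^{r+1}_\e)\|_{0,\e}\lesssim h_\e^{r}|\vu|_{r+1,\e}$. The second piece is a polynomial, so the inverse inequality \ref{inv} and the previous display yield
\begin{equation*}
\|\nabla(\vu^{r+1}_\e-\vQ_0\vu)\|_{0,\e}\lesssim h_\e^{-1}\|\vu^{r+1}_\e-\vQ_0\vu\|_{0,\e}\lesssim h_\e^{-1}\big(\|\vu^{r+1}_\e-\vu\|_{0,\e}+\|\vu-\vQ_0\vu\|_{0,\e}\big)\lesssim h_\e^{r}|\vu|_{r+1,\e}.
\end{equation*}

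Combining the two pieces gives $h_\e\|\nabla(\vu-\vQ_0\vu)\|_{0,\e}\lesssim h_\e^{r+1}|\vu|_{r+1,\e}$. Squaring both local bounds, summing over $\e\in\calT_h$, and invoking the quasi-uniformity assumption \textbf{A3} to replace each $h_\e$ by $h$ completes the argument. I do not anticipate a genuine obstacle here; the only point requiring care is that the Bramble--Hilbert and inverse-inequality constants remain uniform over the (possibly irregular, small-edged) boundary polygons, which is exactly why Lemmas \ref{bh} and \ref{inv} were stated with constants independent of the shape of $\e$.
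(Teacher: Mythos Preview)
Your argument is correct and is exactly the standard Bramble--Hilbert--plus--inverse-inequality route the paper has in mind; the paper itself does not give a detailed proof but simply states that the lemma follows from Lemma~\ref{bh}. One minor remark: since $h_\e\le h$, the replacement of $h_\e^{2(r+1)}$ by $h^{2(r+1)}$ in the final step does not actually require the quasi-uniformity assumption \textbf{A3}.
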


\subsection{ Existence and uniqueness of the discrete solution}\label{subsection: solution}

Since $a_h(\cdot, \cdot)$ is indeed an inner-product on $\vV_h$,
according to the standard theory of mixed finite element methods \cite{boffi2013mixed},
System (\ref{dispro}) admits a unique solution as long as the following discrete inf-sup condition holds:
\begin{equation}
\sup _{\vv \in \vV_h} \frac{|b_h(\vv,q)|}{\|\vv\|_{\vV_h}} \gtrsim\|q\|_{0,\Omega_h}\qquad \forall\, q \in \Psi_h.\label{inf-sup}
\end{equation}
The standard way to prove the discrete inf-sup condition is to use the continuous inf-sup condition together with a stable projection onto the discrete space,
as is the case in \cite{wang2014weak}. However, following this strategy is not easy on domains with curved boundary.
Below we shall explain why. Firstly, the continuous inf-sup condition must be defined on $\Omega$ instead of $\Omega_h$
so that the constant in it does not depend on $h$ or $s$. However, a function $q\in \Psi_h$ is mean value free on $\Omega_h$ but not necessarily on $\Omega$,
and hence can not be used directly in the continuous inf-sup condition.
Secondly, as pointed out earlier, the projection $\vQ_h$ satisfies the commutative property \eqref{eq:commutative} but only maps functions into $\bm{W}_h$,
while $\vtQ_h$ maps functions into $\vV_h$ at the price of violating the commutative property.
Neither of them can serve directly in a traditional proof of the discrete inf-sup condition.
In fact, the proof of the inf-sup condition (\ref{inf-sup}) turns out to be non-trivial:

\begin{lemma} \label{lbb}The discrete $\inf$-$\sup$ condition (\ref{inf-sup}) holds when $h$ is sufficiently small.
\end{lemma}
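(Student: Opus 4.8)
The plan is to prove the inf-sup condition by constructing, for each $q\in\Psi_h$, an explicit test function $\vv\in\vV_h$ built from the solution of a continuous auxiliary problem on $\Omega$. First I would resolve the mean-value mismatch flagged in the text. Extending each polynomial $q|_K$ to the adjacent crescents $M_e$ (the natural extension used in Lemma \ref{lem:inv}) produces a function $\bar q$ on all of $\Omega$ with $\bar q=q$ on $\Omega_h$; subtracting its mean gives $\tilde q=\bar q-|\Omega|^{-1}\int_\Omega\bar q\dx\in L^2_0(\Omega)$. Using Lemma \ref{lem:inv} to control $\sum_e\|\bar q\|_{0,M_e}^2$ together with the mean-freeness of $q$ on $\Omega_h$, one checks $\|\tilde q\|_{0,\Omega}\lesssim\|q\|_{0,\Omega_h}$. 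I would then invoke the continuous inf-sup condition on the convex domain $\Omega$ (equivalently, solve the Neumann problem $-\Delta\phi=\tilde q$, $\partial_n\phi=0$ and set $\vw=\nabla\phi$) to obtain $\vw\in[H^1(\Omega)]^2$ with $\vw\cdot\vtn=0$ on $\partial\Omega$, $\nabla\cdot\vw=\tilde q$, and $\|\vw\|_{1,\Omega}\lesssim\|\tilde q\|_{0,\Omega}\lesssim\|q\|_{0,\Omega_h}$. The candidate is $\vv=\vtQ_h\vw\in\vV_h$.

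Next I would evaluate $b_h(\vv,q)=-(\nabla_w\cdot\vtQ_h\vw,q)_{\Omega_h}$ by comparing $\vtQ_h\vw$ with $\vQ_h\vw$, which differ only in that $\vtQ_b\vw=\vzero$ on boundary edges. For the $\vQ_h$ part the commutative property (Lemma \ref{Projection}) gives $\nabla_w\cdot\vQ_h\vw=\pi_h(\nabla\cdot\vw)=\pi_h\tilde q$; since $\sigma\le\beta$ one has $(\pi_h\tilde q,q)_{\Omega_h}=(\tilde q,q)_{\Omega_h}=(q,q)_{\Omega_h}=\|q\|_{0,\Omega_h}^2$, using $\bar q=q$ on $\Omega_h$ and the mean-freeness of $q$. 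The remaining contribution comes from $\vtQ_h\vw-\vQ_h\vw$, whose weak divergence on a boundary element $K$ is tested only against the boundary flux; choosing the prescribed normal $\vn_e=\vn$ on $e\in\calE_h^B$ and using that $Q_b$ reproduces $q|_e\in P_\beta(e)$, this term collapses to $\sum_{e\in\calE_h^B}\langle\vw\cdot\vn,q\rangle_e$.

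The heart of the argument, and what I expect to be the main obstacle, is estimating $\sum_e\langle\vw\cdot\vn,q\rangle_e$, which is precisely the error introduced by the loss of commutativity. The key observation is that $\vw\cdot\vtn=0$ holds on the curved piece $\we$, so applying the divergence theorem on the crescent $M_e$ (whose boundary is $e\cup\we$) yields
\[
\langle\vw\cdot\vn,q\rangle_e=-\int_{M_e}\big(\nabla\bar q\cdot\vw+\bar q\,\nabla\cdot\vw\big)\dx.
\]
I would then bound the two volume terms by Cauchy--Schwarz on each $M_e$ and sum: $\sum_e\|\vw\|_{0,M_e}^2\lesssim s^2\|\vw\|_{1,\Omega}^2$ from Lemma \ref{lem:OmegaOmegah}, while $\sum_e\|\nabla\bar q\|_{0,M_e}^2$ and $\sum_e\|\bar q\|_{0,M_e}^2$ are controlled by Lemma \ref{lem:inv} and the inverse inequality (Lemma \ref{inv}). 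Tracking the powers of $s$ and $h$ and using $s\le h$ gives $\big|\sum_e\langle\vw\cdot\vn,q\rangle_e\big|\lesssim h^{1/2}\|q\|_{0,\Omega_h}^2$. Hence for $h$ small, $|b_h(\vv,q)|\ge\|q\|_{0,\Omega_h}^2-Ch^{1/2}\|q\|_{0,\Omega_h}^2\ge\tfrac12\|q\|_{0,\Omega_h}^2$.

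Finally I would establish $\|\vv\|_{\vV_h}\lesssim\|q\|_{0,\Omega_h}$. The $\|\vQ_0\vw\|_{\Omega_h}$ part and the interior-edge jumps follow from the standard projection and trace estimates (Lemmas \ref{lem:trace} and \ref{Qh}), bounded by $\|\vw\|_{1,\Omega}$. The only delicate piece is the boundary-edge jump, where $\vtQ_b\vw=\vzero$ forces control of $\sum_{e\in\calE_h^B}h^{-1}\|\vQ_0\vw\cdot\vn\|_{0,e}^2$; after splitting off $(\vQ_0\vw-\vw)\cdot\vn$, the essential term $\sum_e h^{-1}\|\vw\cdot\vn\|_{0,e}^2$ is again treated using $\vw\cdot\vtn=0$ on $\we$ together with the normal discrepancy $|\vn-\vtn|\lesssim s$ from Assumption \textbf{A4}, which makes $\vw\cdot\vn=\vw\cdot(\vn-\vtn)$ of size $O(s)$ on $\we$; transporting this across $M_e$ via the trace estimate of Lemma \ref{lem:OmegaOmegah} and using the finite-overlap property in Assumption \textbf{A6} yields a bound by $\|\vw\|_{1,\Omega}^2$. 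Combining the lower bound of the third step with $\|\vv\|_{\vV_h}\lesssim\|\vw\|_{1,\Omega}\lesssim\|q\|_{0,\Omega_h}$ then proves \eqref{inf-sup} for $h$ sufficiently small.
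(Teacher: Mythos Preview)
Your argument is correct and structurally parallel to the paper's, but differs in one substantive choice: the auxiliary function $\vw$. The paper takes $\vw\in[H_0^1(\Omega)]^2$ with $\nabla\cdot\vw=\bar q$ (the standard Girault--Raviart construction), so that the \emph{full vector} $\vw$, not just its normal component, vanishes on $\partial\Omega$. This makes the two boundary estimates nearly immediate: both the perturbation $\sum_K\langle\vQ_b\vw\cdot\vn,q\rangle_{\partial K\cap\partial\Omega_h}$ and the stabilizer contribution $\sum_K h_K^{-1}\|\vQ_b\vw\cdot\vn\|_{0,\partial K\cap\partial\Omega_h}^2$ are bounded directly via $\|\vw\cdot\vn\|_{0,e}\le\|\vw\|_{0,e}\lesssim s\|\vw\|_{1,M_e}$, the $H_0^1$-inequality of Lemma~\ref{lem:OmegaOmegah}, with no divergence theorem on crescents and no normal-discrepancy transport.

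Your choice $\vw=\nabla\phi$ from the Neumann problem gives only $\vw\cdot\vtn|_{\partial\Omega}=0$, which forces the longer route: you convert the boundary pairing into a volume integral over $M_e$ (the trick the paper postpones to Lemma~\ref{lemma3.13}) and, for the stabilizer bound, you must combine the $O(s)$ discrepancy $|\vn-\vtn|$ with a transfer estimate across $M_e$. Two small caveats. First, the inequality you invoke from Lemma~\ref{lem:OmegaOmegah} is stated only for $H_0^1(\Omega)$ functions, which your $\vw$ is not; what your argument actually needs is the elementary variant $\|v\|_{0,e}^2\lesssim\|v\|_{0,\we}^2+s^2\|\nabla v\|_{0,M_e}^2$ for general $v\in H^1(M_e)$, obtained by integrating $\partial_{\hat y}v$ along vertical segments in the local coordinates of Assumption~\textbf{A4}. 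Second, having $\vw=\nabla\phi\in[H^1(\Omega)]^2$ requires $H^2$-regularity of the Neumann problem, which the convexity assumption supplies but which the paper's $H_0^1$ construction does not need. With these adjustments your proof is complete; the paper's choice of $\vw$ simply buys a shorter argument.
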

\begin{proof}
  Each $q \in \Psi_h$ is a function defined on $\Omega_h$. 
  Note that it can be naturally extended to $\Omega$ by filling the gap $\Omega\backslash\Omega_h$ with the same polynomial values on neighboring mesh elements.
  For simplicity, we still denote this extension by $q$.
  Now $q$ is mean-value free on $\Omega_h$. Define $\bar{q}=q-\frac{1}{|\Omega|}\int_{\Omega\setminus\Omega_h} q \dx$.
  Obviously $\bar{q}$ is mean-value free on $\Omega$. Moreover, by the triangle inequality, Lemma \ref{lem:inv},
  and the fact that $s\le h \le O(1)$, one has
\begin{equation*}
\begin{aligned}
\|\bar{q}\|_{0,\Omega}^2&= 
 \bigg(\|q\|_{0,\Omega_h}^2+\sum_{e\in\calE_h^B}\|q\|_{0,M_e}^2\bigg)+\frac{1}{|\Omega|}\left(\int_{\Omega\setminus\Omega_h} q\dx\right)^2\\
&\lesssim \|q\|_{0,\Omega_h}^2+h^{-1}s^2\|q\|_{0,\Omega_h}^2+|\Omega\backslash\Omega_h| \int_{\Omega\backslash\Omega_h} |q|^2 \dx\\
&= \|q\|_{0,\Omega_h}^2+h^{-1}s^2\|q\|_{0,\Omega_h}^2+ |\Omega\backslash\Omega_h| \sum_{e\in\calE_h^B}\|q\|_{0,M_e}^2\\
&\lesssim \|q\|_{0,\Omega_h}^2+h^{-1}s^2\|q\|_{0,\Omega_h}^2\lesssim \|q\|_{0,\Omega_h}^2.\\
\end{aligned}
\end{equation*}

It can be shown (see, e.g.,\cite{girault2012finite}) that  there exists a $\bm{w} \in\left[H_0^{1}(\Omega)\right]^{2}$ such that $\nabla \cdot \bm{w}=\bar{q}$
and $\|\bm{w}\|_{1,\Omega} \lesssim\|\bar{q}\|_{0,\Omega} \lesssim \|q\|_{0,\Omega_h}$.
By Lemma \ref{Projection} and the fact that $ \sigma \le \beta,$ we have
$$
\left(\nabla_{w} \cdot \vQ_h\bm{w}, q\right)_{\Omega_h}=\left(\pi_h(\nabla \cdot \bm{w}), q\right)_{\Omega_h}=(\nabla \cdot \bm{w}, q)_{\Omega_h}=(\bar{q},q)_{\Omega_h}=\|q\|_{0,\Omega_h}^{2}.
$$
Combine the above equation with the definitions of $\vQ_h$, $\vtQ_h$ and $\nabla_w\cdot$, one gets
$$
\begin{aligned}
\left|\left(\nabla_{w} \cdot \vtQ_h\bm{w}, q\right)_{\Omega_h} \right|
&=\left| \left(\nabla_w \cdot \vQ_h\bm{w}, q\right)_{\Omega_h}-\sum_{\e\in\calT_h^B}\langle \vQ_b\bm{w}\cdot\vn,q\rangle_{\partial\e\cap\partial\Omega_h} \right|\\
&\geq \|q\|_{0,\Omega_h}^{2}-\left( \sum_{K \in \calT_h^B} h_K^{-1}\|\vQ_b \bm{w} \cdot \vn\|_{0,\partial K\cap\partial\Omega_h}^{2}\right)^{1 / 2}
\left( \sum_{K \in \calT_h^B} h_K\|q\|_{0,\partial K\cap\partial\Omega_h}^{2}\right)^{1 / 2}.
\end{aligned}
$$

Check the right-hand side of the above inequality. First, by the trace inequality and the inverse inequality
(lemmas \ref{lem:trace}-\ref{inv}), it is clear that
\begin{equation} \label{eq:q}
\left( \sum\limits_{K \in \calT_h^B} h_K\|q\|_{0,\partial K\cap\partial\Omega_h}^{2}\right)^{1 / 2}\lesssim\|q\|_{0,\Omega_h}.
\end{equation}

Next, using $\bm{w}|_{\partial\Omega}=\vzero$ and Lemma \ref{lem:OmegaOmegah}, we have
\begin{equation}\label{w}
\begin{aligned}
\left(\sum_{K \in \calT_h^B} h_K^{-1}\|\vQ_b \bm{w} \cdot \vn\|_{0,\partial K\cap\partial\Omega_h}^{2}\right)^{1 / 2} & \le \left( \sum_{K \in \calT_h^B} h_K^{-1}\|\bm{w} \cdot \vn\|_{0,\partial K\cap\partial\Omega_h}^{2}\right)^{1 / 2}\\
   & \lesssim h^{-\frac{1}{2}}s \|\bm{w}\|_{1,\Omega}
\lesssim h^{-\frac{1}{2}}s\|q\|_{0,\Omega_h}.
\end{aligned}
\end{equation}
Hence when $h$ is sufficiently small,
\begin{equation*}
\begin{aligned}
\left|(\nabla_w\cdot \vtQ_h\bm{w} ,q) \right|\ge \left(1-O(h^{-\frac{1}{2}}s) \right)\|q\|^2_{0,\Omega_h} \ge \left(1-O(h^{\frac{1}{2}}) \right)\|q\|^2_{0,\Omega_h}\gtrsim \|q\|^2_{0,\Omega_h}.
\end{aligned}
\end{equation*}
Using the fact that $\vQ_0 \vw = \vQ_b\vQ_0 \vw$ and the approximation property of $\vQ_0$, one gets
$$
\begin{aligned}
h_K^{-1}\left\|\left(\vQ_0 \bm{w}-\vQ_b \bm{w}\right) \cdot \vn\right\|_{0,\partial K}^{2} & \le h_K^{-1}\left\|\vQ_0 \bm{w}-\vQ_b \bm{w}\right\|_{0,\partial K}^{2} \\
& \le h_K^{-1}\left\|\vQ_0 \bm{w}-\bm{w}\right\|_{0,\partial K}^{2} \\
& \lesssim\|\nabla \bm{w}\|_{0,K}^{2}.
\end{aligned}
$$
We then have
$$
\begin{aligned}
\| \vtQ_h\bm{w}\|_{\vV_h}=&\left(
\|\vQ_0 \bm{w}\|_{0,\Omega_h}^{2}+\rho \sum_{K \in \calT_h} h_K^{-1}\|\left(\vQ_0 \bm{w}-\vtQ_b \bm{w}\right) \cdot \vn\|_{0,\partial K}^{2}\right)^{1 / 2}\\
\leq&\left(
\|\vQ_0 \bm{w}\|_{0,\Omega_h}^{2}+\rho \sum_{K \in \calT_h} h_K^{-1}\|\left(\vQ_0 \bm{w}-\vQ_b \bm{w}\right) \cdot \vn\|_{0,\partial K}^{2}\right)^{1 / 2}\\
&+\left(\rho \sum_{K \in \calT_h^B} h_K^{-1}\|\vQ_b \bm{w} \cdot \vn\|_{0,\partial K\cap\partial\Omega_h}^{2}\right)^{1 / 2}\\
&\lesssim \|\vw\|_{1,\Omega} + h^{-\frac{1}{2}}s\|\vw\|_{1,\Omega} \\
& \lesssim \|q\|_{0,\Omega_h}.
\end{aligned}
$$

Combining the above gives
$$
\sup _{\vv \in \vV_h} \frac{|b_h(\vv,q)|}{\|\vv\|_{\vV_h}} \ge \frac{|b_h(\vtQ_h\vw,q)|}{\|\vtQ_h\vw\|_{\vV_h}} \gtrsim \|q\|_{0,\Omega_h}.
$$
This completes the proof of the lemma.
\end{proof}

By the standard theory of the mixed finite elements \cite{boffi2013mixed}, we know that system \eqref{dispro} admits a unique solution.
Moreover, the discrete inf-sup condition ensures that the discrete operator in (\ref{dispro}) is stable in the sense that that unique solution to
$$
\begin{cases}
a_h(\vu_h, \vv) + b_h(\vv, p_h)=F(\vv) \quad &\forall \vv \in \vV_h,\\
b_h(\vu_h, q)  =G(q) \quad &\forall q \in \Psi_h, 
\end{cases}
$$
satisfies
\begin{equation} \label{eq:stability}
  \|\vu_h\|_{\vV_h} + \|p_h\|_{\Psi_h} \lesssim \|F\|_{\vV_h'} + \|G\|_{\Psi_h'}.
\end{equation}
The stability result \eqref{eq:stability} is essential to the error analysis to be given next.

\subsection{Error analysis}

To analyze the approximation error of the weak Galerkin discretization \eqref{dispro},
we first give two lemmas that help to simplify the derivation of error equations.
The proof of these two lemmas use quite standard techniques and will be given in Appendix \ref{appendix1}.
\begin{lemma}\label{lemma1}
The solution $\vu$ and $p$ to system (\ref{primalproblem})-\eqref{primalcondition} satisfy
$$
a_h\left(\vtQ_h\vu, \vv\right)+b_h\left(\vv,  \bbQ_hp\right)=c(\vQ_h \vu, \vv)+l_s(\vv) -l_{\mathrm{div}}(\vv)\qquad\forall\, \vv \in \vv_h,
$$
where $c(\cdot,\cdot)$ is the stabilization term defined in \eqref{eq:stabterm}
and the linear functionals  $l_{s}(\cdot)$ and $l_{\mathrm{div}}(\cdot)$ are defined by
$$
\begin{aligned}
l_s(\vv) &=\rho \sum_{K \in \calT_h^B} h_K^{-1}\left\langle\vu \cdot \vn,\left(\vv_0-\vv_b\right) \cdot \vn\right\rangle_{\partial K\cap\partial\Omega_h}, \\
l_{\mathrm{div}}(\vv) &=\sum_{K \in \calT_h}\langle(\vv_0-\vv_b) \cdot \vn, p-\bbQ_h p\rangle_{\partial K}.
\end{aligned}
$$
\end{lemma}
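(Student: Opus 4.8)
The plan is to verify the identity directly by expanding both bilinear forms on the left-hand side and reorganizing the resulting volume and boundary integrals until the stabilization term $c(\vQ_h\vu,\vv)$ and the two functionals $l_s$ and $l_{\mathrm{div}}$ emerge. The only tools needed are the definitions of $a_h$, $b_h$, $\nabla_w\cdot$ and the projections, elementwise integration by parts, the PDE $\vu=-\nabla p$ from \eqref{primalproblem}, and the degree restrictions in \eqref{eq:alphabetagamma}.

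First I would treat $a_h(\vtQ_h\vu,\vv)$. Since $\vtQ_h\vu$ and $\vQ_h\vu$ share the interior component $\vQ_0\vu$ and differ only on $\calE_h^B$, where $\vtQ_b\vu=\vzero$ while $\vQ_b\vu=(Q_b(\vu\cdot\vn_e))\vn_e$, the stabilization part of $a_h(\vtQ_h\vu,\vv)$ splits as $c(\vQ_h\vu,\vv)$ plus a boundary remainder supported on $\partial K\cap\partial\Omega_h$. On such a boundary edge $\vv\in\vV_h$ forces $\vv_b=\vzero$, so the test factor is just $\vv_0\cdot\vn\in P_\alpha(e)=P_\beta(e)$; because $Q_b$ is the $L^2$ projection onto $P_\beta(e)$, I can drop it and replace $\vQ_b\vu\cdot\vn$ by $\vu\cdot\vn$ inside the pairing. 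This converts the remainder exactly into $l_s(\vv)$, leaving $a_h(\vtQ_h\vu,\vv)=(\vQ_0\vu,\vv_0)_{\Omega_h}+c(\vQ_h\vu,\vv)+l_s(\vv)$.

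It then remains to show $(\vQ_0\vu,\vv_0)_{\Omega_h}+b_h(\vv,\bbQ_hp)=-l_{\mathrm{div}}(\vv)$. For the first term I would use $\vu=-\nabla p$ together with the fact that $\vQ_0$ is the $L^2$ projection onto $[P_\alpha(K)]^2$ and $\vv_0$ is a test polynomial, so $(\vQ_0\vu,\vv_0)_K=-(\nabla p,\vv_0)_K$; an elementwise integration by parts rewrites this as $\sum_K(p,\nabla\cdot\vv_0)_K-\sum_K\langle p,\vv_0\cdot\vn\rangle_{\partial K}$. For $b_h(\vv,\bbQ_hp)$ I would apply the definition of the weak divergence (legitimate since $\bbQ_hp\in P_\sigma\subset P_\beta$ by \eqref{eq:alphabetagamma}) and integrate by parts once more, using $\sigma\ge\beta-1=\alpha-1$ to replace $\bbQ_hp$ by $p$ in the volume pairing against $\nabla\cdot\vv_0\in P_{\alpha-1}$. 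Adding the two contributions cancels the volume terms and leaves $-\sum_K\langle\vv_0\cdot\vn,p-\bbQ_hp\rangle_{\partial K}-\sum_K\langle\vv_b\cdot\vn,\bbQ_hp\rangle_{\partial K}$.

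The decisive step is the bookkeeping of these surviving boundary integrals, and the identity follows once I show $\sum_{K\in\calT_h}\langle\vv_b\cdot\vn,p\rangle_{\partial K}=0$. This is where the structure of $\vV_h$ is essential: on each interior edge $\vv_b=v_b\vn_e$ is single-valued while the two outward normals are opposite and $p$ is continuous, so the two contributions cancel; on each boundary edge $\vv_b=\vzero$. This zero sum lets me rewrite $-\sum_K\langle\vv_b\cdot\vn,\bbQ_hp\rangle_{\partial K}$ as $+\sum_K\langle\vv_b\cdot\vn,p-\bbQ_hp\rangle_{\partial K}$, which combines with the $\vv_0$ pairing into exactly $-l_{\mathrm{div}}(\vv)$. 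I expect this normal-vector/continuity cancellation, together with keeping straight which edge terms are controlled by $\vv\in\vV_h$ (hence $\vv_b=\vzero$ on $\calE_h^B$), to be the main bookkeeping obstacle; the polynomial-degree swaps are routine consequences of \eqref{eq:alphabetagamma}.
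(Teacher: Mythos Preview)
Your proposal is correct and follows essentially the same route as the paper's proof in Appendix~\ref{appendix1}: both split $c(\vtQ_h\vu,\vv)=c(\vQ_h\vu,\vv)+l_s(\vv)$ via the $Q_b$-projection property on boundary edges, expand $b_h(\vv,\bbQ_hp)$ through the weak divergence and integration by parts, use $\sigma\ge\alpha-1$ to swap $\bbQ_hp$ for $p$ against $\nabla\cdot\vv_0$, and invoke the interior-edge cancellation $\sum_K\langle\vv_b\cdot\vn,p\rangle_{\partial K}=0$ (the paper applies this cancellation at the start in~\eqref{2.8}, you apply it at the end, but the content is identical).
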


\begin{lemma}\label{lemma2}
The solution $\vu$ to problem (\ref{primalproblem})-\eqref{primalcondition} satisfies
$$
b_h\left( \vtQ_h  \vu, q\right)=-(g, q)_{\Omega_h} + l_b(q)\qquad\forall\,q \in \Psi_h,
$$
where the linear functional  $l_b(\cdot)$ is defined by
$$l_b(q)=\sum_{\e\in\calT_h^B}\langle  {\vu}\cdot {\vn},q\rangle_{\partial\e\cap\partial\Omega_h} .$$
\end{lemma}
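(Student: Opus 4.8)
The plan is to evaluate $b_h(\vtQ_h\vu, q)$ by comparing the modified projection $\vtQ_h\vu$ against the unmodified projection $\vQ_h\vu$, whose weak divergence is controlled by the commutative property of Lemma \ref{Projection}. The two projections share the same interior component $\vQ_0\vu$ and differ only in their boundary traces on $\calE_h^B$, where $\vtQ_b\vu=\vzero$ while $\vQ_b\vu=(Q_b(\vu\cdot\vn_e))\vn_e$. Accordingly, I would split
\[
b_h(\vtQ_h\vu, q) = b_h(\vQ_h\vu, q) + \big[b_h(\vtQ_h\vu, q) - b_h(\vQ_h\vu, q)\big]
\]
and handle the two pieces separately, using that $\vu\in[H^1(\Omega)]^2\subset[H^1(\Omega_h)]^2$ (since $\Omega_h\subset\Omega$) so Lemma \ref{Projection} applies.

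For the first piece I would invoke Lemma \ref{Projection} to write $\nabla_w\cdot\vQ_h\vu=\pi_h(\nabla\cdot\vu)$. Since every $q\in\Psi_h$ restricts to a polynomial in $P_\sigma(\e)\subseteq P_\beta(\e)$ on each element and $\pi_h$ is the $L^2$-projection onto $P_\beta(\e)$, the projection identity gives $(\pi_h(\nabla\cdot\vu),q)_\e=(\nabla\cdot\vu,q)_\e$; combined with $\nabla\cdot\vu=g$ on $\Omega\supseteq\Omega_h$ this yields $b_h(\vQ_h\vu,q)=-(g,q)_{\Omega_h}$. For the difference, I would use the definition of the weak divergence element by element: because $\vtQ_h\vu-\vQ_h\vu$ has vanishing interior part and a boundary trace supported only on $\calE_h^B$, the only surviving contribution is the edge term $\langle(\vtQ_b\vu-\vQ_b\vu)\cdot\vn, q\rangle_{\partial\e\cap\partial\Omega_h}=-\langle\vQ_b\vu\cdot\vn, q\rangle_{\partial\e\cap\partial\Omega_h}$, summed over $\e\in\calT_h^B$. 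Finally I would collapse $\vQ_b\vu\cdot\vn$ to $\vu\cdot\vn$: on each boundary edge $\vn=(\vn_e\cdot\vn)\vn_e$ with $\vn_e\cdot\vn=\pm1$, so that $\langle\vQ_b\vu\cdot\vn, q\rangle_e=(\vn_e\cdot\vn)\langle Q_b(\vu\cdot\vn_e), q\rangle_e=\langle\vu\cdot\vn, q\rangle_e$, where the last step uses that $Q_b$ is the $L^2$-projection onto $P_\beta(e)$ and $q|_e\in P_\beta(e)$. Assembling the edge terms produces exactly $l_b(q)$, and adding the first piece gives the claimed identity.

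I do not expect a genuine obstacle here; the computation is routine rather than delicate. The points that demand care are purely bookkeeping: confirming that the chain $\sigma\le\beta=\alpha$ is precisely what licenses both projection identities, and tracking the distinction between the prescribed edge normal $\vn_e$ and the element-outward normal $\vn$ so that the normal component passes correctly through $Q_b$. Conceptually, the surviving term $l_b(q)$ is the fingerprint of having forced $\vtQ_b\vu=\vzero$ on $\calE_h^B$ in order to land in $\vV_h$: it is exactly the boundary contribution that the commutative property would otherwise cancel, and it is nonzero precisely because $\vu\cdot\vn$ on the straight boundary $\partial\Omega_h$ need not vanish even though $\vu\cdot\vtn=0$ on the curved boundary $\partial\Omega$.
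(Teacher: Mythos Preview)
Your proposal is correct and follows essentially the same route as the paper: split $b_h(\vtQ_h\vu,q)$ through $b_h(\vQ_h\vu,q)$, apply the commutative property (Lemma~\ref{Projection}) together with $\sigma\le\beta$ to reduce the first piece to $-(g,q)_{\Omega_h}$, and read off the boundary correction from the definition of $\nabla_w\cdot$ using that $\vtQ_h\vu-\vQ_h\vu$ has zero interior part and that $q|_e\in P_\beta(e)$ lets $Q_b$ drop. Your extra bookkeeping on the $\vn_e$ versus $\vn$ distinction is sound and slightly more explicit than the paper's one-line ``follows from the property of the projection $\vQ_b$''.
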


With the aid of lemmas \ref{lemma1}-\ref{lemma2}, we can easily derive the error equations.
Let $(\vu, p)$ be the solution to problem (\ref{primalproblem})-\eqref{primalcondition},
and $(\vu_h,p_h)$ be the solution to the weak Galerkin discretization (\ref{dispro}).
Define 
\begin{equation} \label{eq:errdef}
 \ve_{ {\vu}}=\vtQ_h  {\vu}-\vu_h=\left\{\vQ_0  {\vu}- {\vu}_0, \vtQ_b  {\vu}- {\vu}_b\right\},\quad e_{ p}=\bbQ_h  p-p_h.
\end{equation}
Using lemmas \ref{lemma1}-\ref{lemma2}, we clearly have
\begin{equation}\label{erroreuqation1}
\begin{cases}
a_h\left(\ve_{ {\vu}}, \vv\right)+b_h\left(\vv, e_{ p}\right) = c(\vQ_h \vu, \vv)+l_s(\vv)-l_{\text{div}}(\vv)  \qquad &\forall\,  \vv \in \vV_h, \\
b_h\left(\ve_{ {\vu}}, q\right) = l_b(q) \quad &\forall\, q \in \Psi_h.
\end{cases}
\end{equation}

We shall first derive the upper bounds for the right-hand side of (\ref{erroreuqation1}).

\begin{lemma} \label{Berize}
  Let $u\in \vV\cap [H^{ r+1}(\Omega)]^2$ and $p\in\Psi\cap H^{t+1}(\Omega)$ be the solution to problem (\ref{primalproblem})-\eqref{primalcondition},
  where $0 \le r\le \alpha$ and $0 \le t \le \sigma$.
  Then
$$
\begin{aligned}
|c( {\vQ}_h\bm{ {u}},\vv)| &\lesssim  h^{r}\|\vu\|_{r +1,\Omega}\|\vv\|_{\vV_h} \quad &&\forall\, \vv\in\vV_h,\\
|l_{\operatorname{div}}(\vv)| &\lesssim h^{t+1}\|p\|_{t+1,\Omega}\|\vv\|_{\vV_h} \quad &&\forall\, q\in \Psi_h.
\end{aligned}
$$
\end{lemma}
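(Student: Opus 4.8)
The plan is to handle both estimates with the same two-step template: apply the Cauchy--Schwarz inequality on each $\partial K$, insert the weights $h_K^{\pm 1/2}$, and then split the resulting double sum into one factor controlled by $\|\vv\|_{\vV_h}$ and one factor that is a projection error, to which the trace inequality (Lemma \ref{lem:trace}) and the approximation estimate (Lemma \ref{Qh}) apply. Throughout I would use the quasi-uniformity $h_K\approx h$ from assumption \textbf{A3}, and note that since $\Omega_h\subset\Omega$, the seminorms $|\cdot|_{r+1,\Omega_h}$ and $|\cdot|_{t+1,\Omega_h}$ appearing in Lemma \ref{Qh} are bounded by the corresponding full norms on $\Omega$.

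For the first bound, I would begin with the algebraic identity that makes the term small. Since $\beta=\alpha$, on each edge $e$ the normal trace $(\vQ_0\vu)\cdot\vn$ is a polynomial of degree $\le\beta$ and hence is fixed by $Q_b$; together with $(\vQ_b\vu)\cdot\vn=Q_b(\vu\cdot\vn)$ this gives
\[
(\vQ_0\vu-\vQ_b\vu)\cdot\vn=Q_b\big((\vQ_0\vu-\vu)\cdot\vn\big)\quad\text{on }e.
\]
As $Q_b$ is an $L^2$-projection it is a contraction, whence $\|(\vQ_0\vu-\vQ_b\vu)\cdot\vn\|_{0,\partial K}\le\|\vQ_0\vu-\vu\|_{0,\partial K}$. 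Applying Cauchy--Schwarz to $c(\vQ_h\vu,\vv)$ and factoring off $(\rho\sum_{K}h_K^{-1}\|(\vv_0-\vv_b)\cdot\vn\|_{0,\partial K}^2)^{1/2}\le\|\vv\|_{\vV_h}$, it remains to estimate $\sum_K h_K^{-1}\|\vQ_0\vu-\vu\|_{0,\partial K}^2$. The trace inequality converts this, via $h_K\approx h$, into $\sum_K\big(h^{-2}\|\vQ_0\vu-\vu\|_{0,K}^2+\|\nabla(\vQ_0\vu-\vu)\|_{0,K}^2\big)$, and both pieces are bounded by $h^{2r}|\vu|_{r+1,\Omega_h}^2$ through Lemma \ref{Qh}. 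Taking the square root yields the factor $h^{r}\|\vu\|_{r+1,\Omega}$.

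The second bound is the same template without the projection identity. Writing $l_{\mathrm{div}}(\vv)=\sum_K\langle(\vv_0-\vv_b)\cdot\vn,\,p-\bbQ_h p\rangle_{\partial K}$, I would pair the weight $h_K^{-1/2}$ with the flux jump and $h_K^{1/2}$ with the pressure error, so that Cauchy--Schwarz produces $(\sum_K h_K^{-1}\|(\vv_0-\vv_b)\cdot\vn\|_{0,\partial K}^2)^{1/2}\lesssim\|\vv\|_{\vV_h}$ times $(\sum_K h_K\|p-\bbQ_h p\|_{0,\partial K}^2)^{1/2}$. The trace inequality and Lemma \ref{Qh} bound the latter by $h^{t+1}|p|_{t+1,\Omega_h}\le h^{t+1}\|p\|_{t+1,\Omega}$, which finishes the estimate.

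The calculations are routine; the only point requiring genuine care is the identity in the first estimate, where the constraint $\beta=\alpha$ in \eqref{eq:alphabetagamma} is used in an essential way to ensure that $(\vQ_0\vu)\cdot\vn$ survives $Q_b$ untouched. Were $\beta<\alpha$, one would pick up an additional, uncontrolled contribution and the clean $h^{r}$ rate would be lost, so I would flag explicitly where this hypothesis enters.
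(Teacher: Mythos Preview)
Your proposal is correct and follows essentially the same argument as the paper: Cauchy--Schwarz with $h_K^{\pm 1/2}$ weights, the trace inequality, and the approximation bounds of Lemma~\ref{Qh}, with the key observation for the first estimate being that $(\vQ_0\vu)\cdot\vn$ is left invariant by $Q_b$ because $\beta=\alpha$. Your phrasing of this last point via the identity $(\vQ_0\vu-\vQ_b\vu)\cdot\vn=Q_b((\vQ_0\vu-\vu)\cdot\vn)$ and the contraction property of $Q_b$ is in fact slightly cleaner than the paper's, which writes the full-vector relation $\vQ_0\vu=\vQ_b\vQ_0\vu$ and passes through $\|\vQ_0\vu-\vQ_b\vu\|_{0,\partial K}$ rather than restricting to the normal component at the outset.
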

\begin{proof}
Note that $\vQ_0\vu = \vQ_b(\vQ_0\vu)$ on each edge in $\calT_h$.
Hence by the Cauchy-Schwarz inequality, the trace inequality (Lemma \ref{lem:trace}) and the approximation property of $\vQ_0$, we have
\begin{equation} \label{eq:sbound}
\begin{aligned}
|c\left(\vQ_h  \vu, \vv\right)| &= \left|\rho  \sum_{K \in \calT_h } h_K^{-1}\left\langle\left(\vQ_0  \vu -\vQ_b \vu \right) \cdot \vn,\left( \vv_0-\vv_b\right) \cdot \vn\right\rangle_{\partial K}\right| \\
&\le \rho\left(\sum_{K \in \calT_h} h_K^{-1}\left\|\vQ_0  \vu- \vQ_b\vu\right\|_{0,\partial K}^{2}\right)^{1 / 2}
     \left(\sum_{K \in \calT_h} h_K^{-1}\left\| (\vv_0-\vv_b)\cdot\vn \right\|_{0,\partial K}^{2}\right)^{1 / 2} \\
&\le \rho\left(\sum_{K \in \calT_h} h_K^{-1}\left\|\vQ_0  \vu- \vu\right\|_{0,\partial K}^{2}\right)^{1 / 2}
     \left(\sum_{K \in \calT_h} h_K^{-1}\left\|\left( \vv_0-\vv_b\right) \cdot \vn\right\|_{0,\partial K}^{2}\right)^{1 / 2} \\
&\lesssim  h^{r}\|\vu\|_{r+1, \Omega}\|\vv\|_{\vV_h}.
\end{aligned}
\end{equation}

Similarly, the upper bound for $l_{\mathrm{div}}(\vv)$ follows directly from Lemma \ref{lem:trace} and the approximation property of $\bbQ_h$, that is,
$$
\begin{aligned}
|l_{\mathrm{div}}(\vv)| &= \left|\sum_{K \in \calT_h}\langle(\vv_0-\vv_b) \cdot \vn,  p-\bbQ_h  p\rangle_{\partial K}\right| \\
&\lesssim\left(\sum_{K \in \calT_h} h_K^{-1}\left\|(\vv_0-\vv_b) \cdot \vn\right\|_{0,\partial K}^{2}\right)^{1 / 2}h^{t +1}\| p\|_{t +1, \Omega_h}\\
&\lesssim h^{t+1}\|p\|_{t+1, \Omega}\|\vv\|_{\vV_h}.
\end{aligned}
$$
This completes the proof of the lemma.
\end{proof}

\begin{lemma}\label{lemma3.13}
 Assume that $\vu\in \vV\cap [H^2(\Omega)]^2$ is the solution to problem (\ref{primalproblem})-\eqref{primalcondition}, then 
 \begin{equation}\label{lbq}
\begin{aligned}
|l_b(q)| &\lesssim h^{-\frac{3}{2}} s^2 \|\vu\|_{2,\Omega}\|q\|_{0,\Omega_h}&\forall\,  q\in\Psi_h,\\
|l_s(\vv)| &\lesssim h^{-\frac{1}{2}} s \|\vu\|_{2,\Omega}\|\vv\|_{\vV_h}&\forall\, \vv\in\vV_h.\\
\end{aligned}
\end{equation}
\end{lemma}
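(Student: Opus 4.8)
The plan is to treat $l_s(\cdot)$ and $l_b(\cdot)$ by two \emph{different} mechanisms, since the extra weight $h_K^{-1}$ in $l_s(\cdot)$ changes which estimate is sharp. The common starting observation is that $\vu\cdot\vn$ is small on each boundary edge $e$: the Neumann condition gives $\vu\cdot\vtn=0$ on the curved arc $\tilde{e}$, while $\vn$ and $\vtn$ differ only by $O(s)$ by Assumption \textbf{A4}. I would make this precise via the map $\vsig$ of \textbf{A4}: writing a point of $e$ as $(\hat{x},0)$ and its image on $\tilde{e}$ as $(\hat{x},\gamma(\hat{x}))$, and subtracting the vanishing quantity $\vu(\hat{x},\gamma(\hat{x}))\cdot\vtn(\hat{x},\gamma(\hat{x}))=0$, one gets the exact identity
$$\vu(\hat{x},0)\cdot\vn(\hat{x},0)=\big[\vu(\hat{x},0)-\vu(\hat{x},\gamma(\hat{x}))\big]\cdot\vtn(\hat{x},\gamma(\hat{x}))+\vu(\hat{x},0)\cdot\big[\vn(\hat{x},0)-\vtn(\hat{x},\gamma(\hat{x}))\big].$$

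For $l_s(\cdot)$ I would apply Cauchy--Schwarz edge by edge; the factor $\big(\sum_K h_K^{-1}\|(\vv_0-\vv_b)\cdot\vn\|_{0,\partial K}^2\big)^{1/2}\lesssim\|\vv\|_{\vV_h}$ is immediate, so everything reduces to the edge bound $\sum_{e\in\calE_h^B}\|\vu\cdot\vn\|_{0,e}^2\lesssim s^2\|\vu\|_{1,\Omega}^2$. This I obtain from the identity above: the first term is dominated by $\int_0^{|\gamma|}|\partial_{\hat{y}}\vu|\,dt$ and, since $|\gamma|\lesssim s^2$, contributes $\lesssim s^2|\vu|_{1,M_e}^2$ after a Cauchy--Schwarz in $\hat{y}$; the second term is $\le C_4's\,|\vu(\hat{x},0)|$, where the trace $\|\vu\|_{0,e}$ must be routed through the \emph{curved} trace $\|\vu\|_{0,\tilde{e}}$ (again by moving from $(\hat{x},0)$ to $(\hat{x},\gamma)$) so that no spurious $h^{-1}$ enters. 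Summing over $e$ and invoking the global trace inequality on $\Omega$ gives $s^2\|\vu\|_{1,\Omega}^2$; multiplying by the $h^{-1}$ weight then yields the claimed $h^{-1/2}s$ rate.

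For $l_b(\cdot)$ the same Cauchy--Schwarz route only gives $h^{-1/2}s$, which is not sharp enough, so instead I would integrate by parts over the crescent $M_e$. Since $\partial M_e=e\cup\tilde{e}$ with outward normals $-\vn$ and $\vtn$ respectively (using $\Omega_h\subset\Omega$ for convex $\Omega$), and $\vu\cdot\vtn=0$ on $\tilde{e}$, the divergence theorem collapses to
$$\langle\vu\cdot\vn,q\rangle_e=-\int_{M_e}\big[(\nabla\cdot\vu)\,q+\vu\cdot\nabla q\big]\dx.$$
Estimating the two volume terms by Cauchy--Schwarz and summing, I would use Lemma \ref{lem:OmegaOmegah} to get $\sum_e\|\nabla\cdot\vu\|_{0,M_e}^2\lesssim s^2\|\vu\|_{2,\Omega}^2$ and $\sum_e\|\vu\|_{0,M_e}^2\lesssim s^2\|\vu\|_{1,\Omega}^2$, and Lemma \ref{lem:inv} (with the inverse inequality, Lemma \ref{inv}, for $\nabla q$) to get $\sum_e\|q\|_{0,M_e}^2\lesssim h^{-1}s^2\|q\|_{0,\Omega_h}^2$ and $\sum_e\|\nabla q\|_{0,M_e}^2\lesssim h^{-3}s^2\|q\|_{0,\Omega_h}^2$. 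The dominant contribution is the $\vu\cdot\nabla q$ term, which produces exactly $h^{-3/2}s^2\|\vu\|_{2,\Omega}\|q\|_{0,\Omega_h}$.

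The main obstacle I anticipate is recognizing that the two bounds genuinely require different mechanisms: the crescent integration-by-parts is essential for the sharp $s^2$ in $l_b(\cdot)$, yet because of the $h_K^{-1}$ weight it is strictly \emph{worse} than the direct edge estimate for $l_s(\cdot)$. A secondary subtlety, which is easy to overlook, is the edge bound for $l_s(\cdot)$: one must estimate $\|\vu\|_{0,e}$ through the curved boundary trace $\|\vu\|_{0,\tilde{e}}$ rather than the flat trace inequality, otherwise a factor $h^{-1/2}$ is lost and the rate degrades from $h^{-1/2}s$ to $h^{-1}s$.
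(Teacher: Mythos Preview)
Your treatment of $l_b(\cdot)$ coincides with the paper's: integration by parts over each crescent $M_e$, then Lemma~\ref{lem:OmegaOmegah} for the $\vu$-factors and Lemma~\ref{lem:inv} (together with the inverse inequality) for the $q$-factors, with the $\vu\cdot\nabla q$ term dominating.

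For $l_s(\cdot)$ you take a genuinely different route. You bound $\sum_{e\in\calE_h^B}\|\vu\cdot\vn\|_{0,e}^2$ directly from the pointwise identity, using the fundamental theorem of calculus across the crescent for the $[\vu(\hx,0)-\vu(\hx,\gamma)]\cdot\vtn$ piece and the normal-mismatch bound from \textbf{A4} for the other piece, routing $\|\vu\|_{0,e}$ through $\|\vu\|_{0,\tilde e}$ so that only the global trace on $\partial\Omega$ is invoked. The paper instead \emph{reuses} the crescent mechanism from $l_b$: it extends the edge polynomial $(\vv_0-\vv_b)\cdot\vn_e$ to a two-variable polynomial $((\vv_0-\vv_b)\cdot\vn_e)^*$ constant in $\hat y$, subtracts the vanishing contribution on $\tilde e$, and converts $\langle\vu\cdot\vn_e,(\vv_0-\vv_b)\cdot\vn_e\rangle_e$ into $(\nabla\cdot\vu,(\cdot)^*)_{M_e}+(\vu,\nabla(\cdot)^*)_{M_e}$; the crucial estimates are $\sum_e\|(\cdot)^*\|_{0,M_e}^2\lesssim h s^2\|\vv\|_{\vV_h}^2$ and $\sum_e\|\nabla(\cdot)^*\|_{0,M_e}^2\lesssim h\|\vv\|_{\vV_h}^2$. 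Both routes deliver the same $h^{-1/2}s$ rate, so your remark that the crescent approach is ``strictly worse'' for $l_s$ is not accurate --- with the extension trick it is equally sharp. What your argument buys is that it avoids constructing the extension $(\cdot)^*$ and, as a bonus, only uses $\vu\in H^1(\Omega)$ for the $l_s$ bound; what the paper's argument buys is uniformity, handling both $l_b$ and $l_s$ by a single device.
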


\begin{figure}[H]
\begin{center}
\includegraphics[width=1.5in]{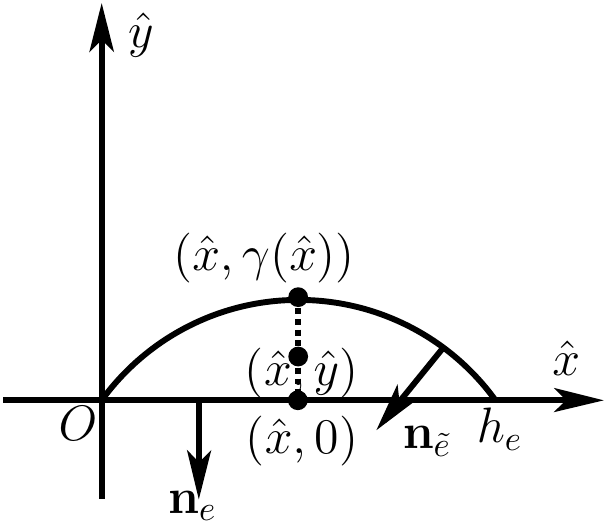}\\
\caption{$M_e$ in the local coordinate system.}
\label{maping2}
\end{center}
\end{figure}

\begin{proof}
  For each $e\in\calE_h^B$, the crescent-shaped region $M_e$ is surrounded by the straight edge $e$ and the curved edge $\tilde{e}$.
  Let $\vn_{e}$ and $\vn_{\we}$ be unit normal vectors on $e$ and $\we$, respectively, with directions shown in Figure \ref{maping2}.
  It is just for convenience that we draw both $\vn_{e}$ and $\vn_{\we}$ as downward pointing, and the direction has no affect to the final result.
  
  As mentioned in the beginning of the proof of Lemma \ref{lbb}, a function $q\in\Psi_h$ can be naturally extended to a function defined on $\Omega$,
  which is still denoted by $q$ for simplicity.
  Using Lemma \ref{lem:inv} and the inverse inequality we get
  $$
  \sum_{e\in\calE_h^B}\|q\|_{0,M_e}^2 \lesssim h^{-1}s^2\|q\|_{0,\Omega_h}^2, \qquad
  \sum_{e\in\calE_h^B}\|\nabla q\|_{0,M_e}^2 \lesssim h^{-3}s^2\|q\|_{0,\Omega_h}^2.
  $$
Combining the above and noticing that the solution $\vu$ to problem (\ref{primalproblem}) satisfies $\vu\cdot\vn_{\we}|_{\we}=0$, we have
$$
\begin{aligned}
  |l_b(q)| = \left|-\sum_{e\in\calE_h^B} \langle \vu\cdot\vn_e, q \rangle_{e} \right| 
    &=\left|-\sum_{e\in\calE_h^B} \bigg( \langle \vu\cdot\vn_e, q \rangle_{e}  - \langle \vu\cdot\vn_{\we}, q \rangle_{\we} \bigg)\right| \\
    &=\left|-\sum_{e\in\calE_h^B} \bigg( (\nabla\cdot\vu, q)_{M_e} + (\vu, \nabla q)_{M_e} \bigg)\right| \\
    &\leq \|\nabla\cdot\vu\|_{0,\Omega\setminus\Omega_h} \|q\|_{0,\Omega\setminus\Omega_h} + \|\vu\|_{0,\Omega\setminus\Omega_h} \|\nabla q\|_{0,\Omega\setminus\Omega_h}\\
    &\lesssim s\|\vu\|_{2,\Omega}\, h^{-\frac{1}{2}}s\|q\|_{0,\Omega_h}  + s\|\vu\|_{1,\Omega} \, h^{-\frac{3}{2}}s\|q\|_{0,\Omega_h} \\
    &\lesssim h^{-\frac{3}{2}}s^2\|\vu\|_{2,\Omega}\|q\|_{0,\Omega_h},
\end{aligned}
$$
where in the second last step we have used Lemma \ref{lem:OmegaOmegah}.
This completes the proof of the first inequality in \eqref{lbq}.

For any $\vv=\{\vv_0,\vv_b\}\in\vV_h$, note that $((\vv_0-\vv_b)\cdot\vn_e)|_{e}$ is a polynomial of $\hat{x}$
in the local coordinate system $\hat{x}$-$\hat{y}$ as shown in Figure \ref{maping2}.
This polynomial form can be viewed as the restriction of a two-variable polynomial $((\vv_0-\vv_b)\cdot\vn_e)^*$, which depends only on $\hat{x}$, on edge $e$.
Then
$$
\begin{aligned}
  \sum_{e\in\calE_h^B}\|((\vv_0-\vv_b)\cdot\vn_e)^*\|_{0,M_e}^2
  &= \sum_{e\in\calE_h^B}  \int_0^{h_e} \int_0^{\gamma(\hat{x})} |((\vv_0-\vv_b)\cdot\vn_e)^*|^2 \dhy\dhx\\
  &\lesssim \sum_{e\in\calE_h^B}  \int_0^{h_e} s^2 |(\vv_0-\vv_b)\cdot\vn_e|^2 \dhx\\
  &\lesssim h s^2 \|\vv\|_{\vV_h}^2.
\end{aligned}
$$
Similarly, 
$$
\begin{aligned}
\sum_{e\in\calE_h^B}\|\nabla((\vv_0-\vv_b)\cdot\vn_e)^*\|_{0,M_e}^2
&= \sum_{e\in\calE_h^B} \int_0^{h_e} \int_0^{\gamma(\hat{x})}  |\frac{\partial ((\vv_0-\vv_b)\cdot\vn_e)^*}{\partial \hat{x}}|^2 \dhy\dhx \\
&\lesssim \sum_{e\in\calE_h^B}s^2|(\vv_0-\vv_b)\cdot\vn_e|^2_{1,e} \\
&\lesssim \sum_{e\in\calE_h^B} \|(\vv_0-\vv_b)\cdot\vn_e\|^2_{0,e} \\
&\lesssim h\|\vv\|_{\vV_h}^2.
\end{aligned}
$$

Combining the above and using the fact that $\vu\cdot {\vn}_{\we}|_{\we}=0$ together with Lemma \ref{lem:OmegaOmegah}, we have
$$
\begin{aligned}
|l_s(\vv)|&= \left|\rho \sum_{\e\in\calT_h^B} h_K^{-1} \langle  \vu\cdot \vn, (\vv_0-\vv_b)\cdot\vn\rangle_{\partial K\cap \partial\Omega_h} \right|\\
 &\lesssim h^{-1}{\sum_{e\in\calE_h^B} \bigg| \langle\vu\cdot {\vn}_e,(\vv_0-\vv_b)\cdot\vn_e\rangle_{e} -\langle\vu\cdot {\vn}_{\we},((\vv_0-\vv_b)\cdot\vn_e)^*\rangle_{\we}} \bigg|\\
&=h^{-1}{\sum_{e\in\calE_h^B} \bigg| (\nabla\cdot{\vu},((\vv_0-\vv_b)\cdot\vn_e)^*)_{M_e} + ({\vu},\nabla ((\vv_0-\vv_b)\cdot\vn_e)^*)_{M_e}} \bigg|\\
& \leq h^{-1} \bigg( s\|\vu\|_{2,\Omega} \, h^{\frac{1}{2}} s \|\vv\|_{\vV_h} + s\|\vu\|_{1,\Omega} \,h^{\frac{1}{2}} \|\vv\|_{\vV_h}  \bigg) \\
&\lesssim h^{-\frac{1}{2}} s\|\vu\|_{2,\Omega}\|\vv\|_{\vV_h}.
\end{aligned}
$$
This completes the proof of the lemma.
\end{proof}

Now, we are able to present the error estimate:
\begin{theorem} \label{th1}  Let $\vu$ and $p$ satisfy the conditions in Lemma \ref{Berize} and Lemma \ref{lemma3.13}.
  The error $\ve_{\vu}$ and $e_p$ satisfy
$$
\left\|\ve_{ {\vu}}\right\|_{\vV_h}+\left\|e_{ p}\right\|_{0,\Omega_h} \lesssim h^r\|\vu\|_{r+1, \Omega} + h^{t+1}\|p\|_{t+1, \Omega } + h^{-\frac{1}{2}} s\|\vu\|_{2,\Omega},
$$
where $0 \le r\le \alpha$ and $0 \le t \le \sigma$.
\end{theorem}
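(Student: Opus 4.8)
The plan is to read the estimate off directly from the abstract stability of the discrete saddle-point operator, so that the theorem becomes an assembly of the bounds already in hand. The error equations \eqref{erroreuqation1} say precisely that the pair $(\ve_{\vu}, e_p)$ solves the discrete system governed by $a_h$ and $b_h$ with right-hand side functionals
$$
F(\vv) = c(\vQ_h\vu,\vv) + l_s(\vv) - l_{\mathrm{div}}(\vv), \qquad G(q) = l_b(q).
$$
Since the discrete inf-sup condition has already been established in Lemma \ref{lbb}, the stability bound \eqref{eq:stability} applies verbatim. Recalling that the $\Psi_h$-norm is just the $L^2(\Omega_h)$-norm, it yields
$$
\|\ve_{\vu}\|_{\vV_h} + \|e_p\|_{0,\Omega_h} \lesssim \|F\|_{\vV_h'} + \|G\|_{\Psi_h'}.
$$
The entire task then reduces to estimating the two dual norms.

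For $\|F\|_{\vV_h'}$ I would simply collect the three bounds already proved, each of which is of the form $(\text{quantity})\,\|\vv\|_{\vV_h}$: Lemma \ref{Berize} controls $|c(\vQ_h\vu,\vv)|$ by $h^r\|\vu\|_{r+1,\Omega}\|\vv\|_{\vV_h}$ and $|l_{\mathrm{div}}(\vv)|$ by $h^{t+1}\|p\|_{t+1,\Omega}\|\vv\|_{\vV_h}$, while the second inequality of Lemma \ref{lemma3.13} controls $|l_s(\vv)|$ by $h^{-1/2}s\,\|\vu\|_{2,\Omega}\|\vv\|_{\vV_h}$. Dividing by $\|\vv\|_{\vV_h}$ and taking the supremum over $\vv\in\vV_h$ gives
$$
\|F\|_{\vV_h'} \lesssim h^r\|\vu\|_{r+1,\Omega} + h^{t+1}\|p\|_{t+1,\Omega} + h^{-\frac12}s\,\|\vu\|_{2,\Omega}.
$$
For $\|G\|_{\Psi_h'}$, the first inequality of Lemma \ref{lemma3.13} directly gives $\|G\|_{\Psi_h'} \lesssim h^{-3/2}s^2\,\|\vu\|_{2,\Omega}$.

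The only point that needs a moment's care is reconciling the $h^{-3/2}s^2$ contribution with the stated bound, which contains no such term. Here I would invoke the standing relation $s\le h$: it gives $h^{-3/2}s^2 = (h^{-1/2}s)(h^{-1}s) \le h^{-1/2}s$, so the $G$-contribution is dominated by the $l_s$-contribution $h^{-1/2}s\,\|\vu\|_{2,\Omega}$ already present in $\|F\|_{\vV_h'}$ and can simply be absorbed into it. Summing the two dual-norm bounds and discarding the redundant term then produces exactly the asserted estimate. I do not expect any genuine obstacle in this final step: all the analytic difficulty sits upstream, in the nonstandard proof of the inf-sup condition underlying the stability estimate \eqref{eq:stability} and in the crescent-region boundary estimates of Lemma \ref{lemma3.13}; the theorem itself is bookkeeping together with the $s\le h$ absorption.
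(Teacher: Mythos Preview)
Your proposal is correct and matches the paper's own proof essentially line for line: the paper's argument is the one-sentence ``follows immediately from the mixed finite element theory, Inequality \eqref{eq:stability}, the error equation \eqref{erroreuqation1}, lemmas \ref{Berize}--\ref{lemma3.13}, and the fact that $s\le h$,'' and you have simply unpacked that sentence, including the $s\le h$ absorption of the $h^{-3/2}s^2$ term into $h^{-1/2}s$.
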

\begin{proof} 
  The theorem follows immediately from the mixed finite element theory,
  Inequality (\ref{eq:stability}), the error equation (\ref{erroreuqation1}), lemmas \ref{Berize}-\ref{lemma3.13},
  and the fact that $s\le h$.
\end{proof}

The error bound in Theorem \ref{th1} consists of two parts:
an approximation error $h^r\|\vu\|_{r+1, \Omega} + h^{t+1}\|p\|_{t+1, \Omega}$
and a consistency error $h^{-\frac{1}{2}} s\|\vu\|_{2,\Omega}$.
The consistency error comes purely from the discrepancy between $\Omega$ an $\Omega_h$.
It vanishes if $\Omega=\Omega_h$.
Here we are only interested in the case when $\Omega$ has curved boundary and hence $\Omega\neq\Omega_h$.
To fully examine the effect of the consistency error, we set
$\alpha=\beta \ge 1$, $\sigma=\alpha-1$ and assume that the exact solution satisfies $\vu\in [H^{\alpha+1}(\Omega)]^2$, $p\in H^{\alpha}(\Omega)$,
so that the approximation error reaches its optimal order $O(h^{\alpha})$.
Then by adjusting $s$, we get the following results:
\begin{corollary}\label{lemma3.15}  
  Assuming the polynomial approximation error reaches the optimal $O(h^{\alpha})$ with $\alpha\ge 1$.
  On domains with curved boundary, when $s=O(h)$, the errors $\bm{e}_{\vu}$ and $e_p$ satisfy
$$
\left\|\ve_{ {\vu}}\right\|_{\vV_h}+\left\|e_{ p}\right\|_{0,\Omega_h} \lesssim O(h^{\frac{1}{2}}).
$$
\end{corollary}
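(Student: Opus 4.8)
The plan is to obtain the corollary as a direct specialization of Theorem \ref{th1}. The starting point is the general error bound
$$
\|\ve_{\vu}\|_{\vV_h} + \|e_p\|_{0,\Omega_h} \lesssim h^r \|\vu\|_{r+1,\Omega} + h^{t+1}\|p\|_{t+1,\Omega} + h^{-\frac{1}{2}} s \,\|\vu\|_{2,\Omega},
$$
which holds for any $0 \le r \le \alpha$ and $0 \le t \le \sigma$. Since $r$ and $t$ may be chosen freely within these ranges, I would pick them to maximize the powers of $h$ in the first two (approximation) terms, subject to the regularity that is actually available.

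First, with the corollary's assumptions $\sigma = \alpha - 1$, $\vu \in [H^{\alpha+1}(\Omega)]^2$ and $p \in H^{\alpha}(\Omega)$, I would take $r = \alpha$ and $t = \sigma = \alpha - 1$. These are the largest admissible indices, and the assumed regularity guarantees that $\|\vu\|_{\alpha+1,\Omega}$ and $\|p\|_{\alpha,\Omega}$ are both finite. Under this choice the first two terms become $h^{\alpha}\|\vu\|_{\alpha+1,\Omega}$ and $h^{\alpha}\|p\|_{\alpha,\Omega}$, i.e.\ the optimal $O(h^{\alpha})$ approximation order.

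Next I would substitute $s = O(h)$ into the consistency term $h^{-\frac{1}{2}} s\,\|\vu\|_{2,\Omega}$, which collapses to $h^{-\frac{1}{2}} \cdot O(h) \cdot \|\vu\|_{2,\Omega} = O(h^{\frac{1}{2}})$. The final step is to compare the three contributions: since $\alpha \ge 1$ implies $h^{\alpha} \le h \le h^{\frac{1}{2}}$ for $h \le 1$, the two $O(h^{\alpha})$ approximation terms are dominated by the $O(h^{\frac{1}{2}})$ consistency term, which yields the claimed estimate.

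There is no genuine technical obstacle here; the corollary is a bookkeeping consequence of Theorem \ref{th1}. The only point worth emphasizing is conceptual rather than computational: the culprit is the factor $h^{-\frac{1}{2}}$ multiplying $s$ in the consistency error. Even though $s = O(h)$ already makes the raw geometric defect small, this negative power of $h$ erodes exactly one half order, so the scheme cannot do better than $O(h^{\frac{1}{2}})$ on curved domains no matter how large $\alpha$ is. This is precisely the surprising accuracy loss advertised in the introduction, and it is what motivates the boundary-correction remedy developed in the next section.
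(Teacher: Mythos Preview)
Your proposal is correct and matches the paper's approach exactly: the paper also derives the corollary directly from Theorem~\ref{th1} by choosing $r=\alpha$, $t=\sigma=\alpha-1$ (so the approximation error is $O(h^\alpha)$) and then substituting $s=O(h)$ into the consistency term $h^{-1/2}s$ to obtain $O(h^{1/2})$, which dominates since $\alpha\ge 1$. There is nothing to add.
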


Numerical results in Section \ref{section5} will show that the result in Corollary \ref{lemma3.15} is indeed sharp.

\begin{corollary} \label{lemma3.16}
  Assuming that the polynomial approximation error reaches the optimal $O(h^{\alpha})$ with $\alpha\ge 1$.
  On domains with curved boundary, when $s=O(h^{\alpha+\frac{1}{2}})$, the errors $\bm{e}_{\vu}$ and $e_p$ reach an optimal
$$
\left\|\ve_{ {\vu}}\right\|_{\vV_h}+\left\|e_{ p}\right\|_{0,\Omega_h} \lesssim O(h^{\alpha}).
$$
\end{corollary}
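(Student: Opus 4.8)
The plan is to obtain this as a direct specialization of Theorem \ref{th1}, with the boundary-edge parameter $s$ tuned so that the consistency error is pushed down to the level of the approximation error. First I would fix the polynomial degrees exactly as in the discussion preceding Corollary \ref{lemma3.15}, namely $\alpha=\beta\ge 1$ and $\sigma=\alpha-1$, and invoke the stated regularity $\vu\in[H^{\alpha+1}(\Omega)]^2$, $p\in H^{\alpha}(\Omega)$. These choices permit the largest admissible Sobolev indices $r=\alpha$ and $t=\sigma=\alpha-1$ in Theorem \ref{th1}, so that the first two terms of its error bound become $h^{\alpha}\|\vu\|_{\alpha+1,\Omega}+h^{\alpha}\|p\|_{\alpha,\Omega}$, i.e.\ the approximation part is already $O(h^{\alpha})$.

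Next I would treat the remaining consistency term $h^{-\frac{1}{2}}s\|\vu\|_{2,\Omega}$ from Theorem \ref{th1}. The only substantive observation is that substituting $s=O(h^{\alpha+\frac{1}{2}})$ gives
$$
h^{-\frac{1}{2}}\,s\,\|\vu\|_{2,\Omega}=O\!\left(h^{-\frac{1}{2}}\cdot h^{\alpha+\frac{1}{2}}\right)\|\vu\|_{2,\Omega}=O(h^{\alpha}),
$$
where $\|\vu\|_{2,\Omega}$ is finite since $\alpha\ge 1$ forces $\vu\in[H^{\alpha+1}(\Omega)]^2\subset[H^2(\Omega)]^2$. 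Thus the consistency contribution is brought to the same order $O(h^{\alpha})$ as the approximation error, which is precisely the balance that the scaling $s=O(h^{\alpha+\frac{1}{2}})$ is engineered to achieve.

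Combining the two bounds through the triangle inequality already present in Theorem \ref{th1} then yields
$$
\left\|\ve_{\vu}\right\|_{\vV_h}+\left\|e_{p}\right\|_{0,\Omega_h}\lesssim h^{\alpha},
$$
as claimed. I do not expect any genuinely hard step: the argument is essentially an arithmetic balancing of exponents. The single point that deserves a word of care, rather than a difficulty per se, is that one must ensure a mesh with boundary edges as short as $s=O(h^{\alpha+\frac{1}{2}})$ can still be taken to satisfy assumptions \textbf{A1}--\textbf{A6} with $h$-independent constants (in particular \textbf{A5} and the bound of Remark \ref{rem:A5A6}, which here allows $O(h^{\frac{1}{2}-\alpha})$ boundary edges per element). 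This is exactly the multiple-short-edge construction advertised in the introduction, so Theorem \ref{th1} applies on such meshes and the corollary follows at once.
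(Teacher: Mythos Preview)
Your proposal is correct and matches the paper's own reasoning: the corollary is stated without a separate proof in the paper, being presented as an immediate specialization of Theorem~\ref{th1} with the parameter choices $\alpha=\beta\ge 1$, $\sigma=\alpha-1$, $r=\alpha$, $t=\alpha-1$ and $s=O(h^{\alpha+\frac{1}{2}})$, exactly as you outline. Your additional remark about the mesh assumptions \textbf{A1}--\textbf{A6} accommodating the short-edge construction is appropriate and consistent with the paper's setup.
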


When $h$ is small, setting $s=O(h^{\alpha+\frac{1}{2}})$ means to approximate the curved boundary $\partial\Omega$ with
multiple short edges. Numerical results in Section \ref{section5}
will show that for moderate $\alpha$ (our experiments take $\alpha=1,\,2,\,3$), this approach seem to work quite well.


\section{A modified weak Galerkin method}{\label{section4}}
In this section, we construct a modified weak Galerkin discretization for problem (\ref{primalproblem})-\eqref{primalcondition},
\wangI{for which the consistency error improves from $O(h^{-\frac{1}{2}} s)$ to $O(h^{-\frac{1}{2}} s^2)$.
}

Previously we have defined $\vn$ and $\vtn$ as the outward normal on $e\in \calE_h^B$ and $\we$, respectively.
By using the map $\vsig$ as shown in Figure \ref{sigma}, the definition of $\vtn$ can be pulled to $e$ through $\vtn\circ \vsig$.
We still denote this pullback by $\vtn$, i.e., $\vtn$ is now also well-defined on $e\in \calE_h^B$.
For simplicity of notation, on interior edges $e\in \calE_h^I$ we just set $\vtn=\vn$. 
Now we introduce the following modified bilinear forms
\begin{equation}
  \begin{aligned}
    \sh(\vu, \vv) &=\rho \sum_{K \in \calT_h} h_K^{-1}\left\langle\left(\vu_0-\vu_b\right) \cdot \vtn,\left(\vv_0-\vv_b\right) \cdot \vtn\right\rangle_{\partial K}, \\
    \ah(\vu, \vv) &= \left( \vu_0, \vv_0\right)_{\Omega_h} + \sh(\vu, \vv),
\end{aligned}
\end{equation}
 while the definition $b_h(\vv, q) = -(\nabla_w\cdot \vv, q)$ remains unchanged.
\begin{remark}
  The definition of $b_h(\cdot,\cdot)$ depends on $\nabla_w\cdot$ which in turn depends on the choice of outward normal vectors on each $\e\in \calT_h$.
  We emphasize that the definition of $\nabla_w\cdot$ still uses $\vn$ instead of $\vtn$.
  Hence $b_h(\cdot,\cdot)$ remains unchanged.
\end{remark}

The modified weak Galerkin formulation for problem (\ref{primalproblem})-\eqref{primalcondition} reads as follows:
{\it Find $\vu_h \in \vV_h$ and $p_h \in \Psi_h$ such that}
\begin{equation}\label{dispro2}
\begin{cases}
\ah(\vu_h, \vv) + b_h(\vv, p_h)=0 \quad &\forall\, \vv \in \vV_h,\\
b_h(\vu_h, q) -\langle\vu_0\cdot\vn -\overline{\vu_0\cdot\vn } ,q\rangle_{\partial\Omega_h}  =-(g, q)_{\Omega_h}\quad &\forall\, q \in \Psi_h, 
\end{cases}
\end{equation}
where $\langle\vu_0\cdot\vn-\overline{\vu_0\cdot\vn } ,q\rangle_{\partial\Omega_h}$ in the second equation is a boundary correction term with
 \liul{$$\overline{\vu_0\cdot\vn }|_e = \frac{1 }{|e|}\int_{e}\vu_0\cdot\vn \ds,\quad \forall e\in\calE_h^B.$$}

\begin{remark} \label{rem:overline}
  \wangI{The term $\overline{\vu_0\cdot\vn}$ is added for two purposes.
    First, it ensures that the left-hand side of the second equation of \eqref{dispro2} vanishes for $q=const$.
    Then, under the discrete inf-sup condition to be proved later,
    the kernels of both the stiffness matrix for \eqref{dispro2} and its transpose are $q\equiv const$.
    As discussed earlier in Remark \ref{rem:discretecompat},
    one does not need to worry about the `compatibility' issue for the modified problem \eqref{dispro2}.
    Second, it gives an $O(s)$ asymptotic rate to the boundary correction term, which will play an important role in the current proof
    of the discrete inf-sup condition.}
\end{remark}

Similar to the analysis in Section \ref{section3}, its not hard to see that
$$
\|\vv\|_{\nnorm} =\sqrt{\ah(\vv,\vv)}
$$
is a well-defined norm on $\vV_h$.
Moreover, we have
\begin{lemma} \label{lem:normequivance}
  One has $\|\vv\|_{\nnorm}\approx \|\vv\|_{\vV_h}$ for all $\vv\in \vV_h$.
\end{lemma}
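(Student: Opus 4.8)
The plan is to treat $\|\cdot\|_{\nnorm}$ as an $O(s)$ perturbation of $\|\cdot\|_{\vV_h}$. The two norms share the identical volume term $\|\vv_0\|_{\Omega_h}^2$ and differ only through their stabilization parts, which use $\vtn$ in $\sh(\cdot,\cdot)$ versus $\vn$ in $\|\cdot\|_{\vV_h}$. Since we have set $\vtn=\vn$ on every interior edge $e\in\calE_h^I$, this discrepancy is supported entirely on the boundary edges $e\in\calE_h^B$, where Assumption \textbf{A4} supplies the smallness estimate $|\vtn-\vn|\lesssim s$. The whole argument rests on showing that this perturbation is harmless once combined with the trace and inverse inequalities and the mesh hypothesis $s\le h$.

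To establish $\|\vv\|_{\nnorm}\lesssim\|\vv\|_{\vV_h}$, I would split, on each $\partial K$, $(\vv_0-\vv_b)\cdot\vtn=(\vv_0-\vv_b)\cdot\vn+(\vv_0-\vv_b)\cdot(\vtn-\vn)$ and apply Minkowski's inequality in the $h_K^{-1}$-weighted $\ell^2$ norm. The first piece reproduces exactly the stabilization term of $\|\vv\|_{\vV_h}$ and is therefore bounded by $\|\vv\|_{\vV_h}$. For the second piece, note that on each $e\in\calE_h^B$ one has $\vv_b=0$ for $\vv\in\vV_h$, so the term reduces to $\vv_0\cdot(\vtn-\vn)$; using $|\vtn-\vn|\lesssim s$ followed by the trace inequality (Lemma \ref{lem:trace}) and the inverse inequality (Lemma \ref{inv}) gives, per boundary element, $\|\vv_0\cdot(\vtn-\vn)\|_{0,\partial K}^2\lesssim s^2\|\vv_0\|_{0,\partial K}^2\lesssim s^2 h^{-1}\|\vv_0\|_{0,K}^2$. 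Multiplying by $h_K^{-1}\approx h^{-1}$ and summing over $K\in\calT_h^B$ produces the factor $h^{-2}s^2\le 1$, so the second piece is controlled by $\|\vv_0\|_{0,\Omega_h}\le\|\vv\|_{\vV_h}$. Combining the two contributions yields $\sh(\vv,\vv)^{1/2}\lesssim\|\vv\|_{\vV_h}$, whence $\|\vv\|_{\nnorm}\lesssim\|\vv\|_{\vV_h}$.

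The reverse bound $\|\vv\|_{\vV_h}\lesssim\|\vv\|_{\nnorm}$ follows verbatim after interchanging the roles of $\vn$ and $\vtn$, since $|\vn-\vtn|$ obeys the same $O(s)$ estimate; here I would instead use $\|\vv_0\|_{0,\Omega_h}\le\|\vv\|_{\nnorm}$ and $\sh(\vv,\vv)^{1/2}\le\|\vv\|_{\nnorm}$. Putting the two inequalities together gives the claimed equivalence, with constants depending only on $\rho$ and the mesh regularity constants of \textbf{A1}--\textbf{A6}, not on $h$ or $s$. The only point demanding care — and the sole place the argument could fail — is the bookkeeping of the powers of $h$ and $s$: the cross term accumulates the factor $h^{-2}s^2$, which is bounded precisely because $s\le h$. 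The $O(h/s)$ multiplicity of boundary edges within a single $K\in\calT_h^B$ (Remark \ref{rem:A5A6}) is already absorbed when the boundary-edge sum is bounded by the full $\|\vv_0\|_{0,\partial K}^2$, so no separate edge-counting is required.
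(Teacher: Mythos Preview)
Your argument is correct and coincides with the paper's own proof: both isolate the cross term $\rho\sum_{K}h_K^{-1}\|(\vv_0-\vv_b)\cdot(\vtn-\vn)\|_{0,\partial K\cap\partial\Omega_h}^2$, use $\vv_b=\vzero$ on $\partial\Omega_h$, Assumption \textbf{A4}, and the trace plus inverse inequalities to bound it by $\|\vv_0\|_{0,\Omega_h}^2$, then conclude by the triangle inequality. Your observation that the $O(h/s)$ edge multiplicity is absorbed once the boundary-edge norm is dominated by the full $\|\vv_0\|_{0,\partial K}$ is accurate; the paper lists this count among its ingredients but it is not strictly needed in the estimate as you wrote it.
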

\begin{proof}
By the mesh assumptions \textbf{A4} and \textbf{A6},
the trace and the inverse inequality (lemmas \ref{lem:trace}-\ref{inv}),
the facts that $\vv_b=\vzero$ on $\partial\Omega_h$ and each $\e\in\calT_h^B$
  contains at most $O(\frac{h}{s})$ edges in $\calE_h^B$,
one gets
  $$
    \rho \sum_{\e \in \calT_h} h_{\e}^{-1}\left\|\left(\vv_0-\vv_b\right) \cdot (\vtn-\vn)\right\|_{0,\partial \e\cap \partial\Omega_h}^2 \lesssim \|\vv_0\|_{0,\Omega_h}^2.
  $$
    The lemma then follows from the above inequality, the definitions of $\|\cdot\|_{\vV_h}$ and $\|\cdot\|_{\nnorm}$, and the triangle inequality.
\end{proof}

Denote the left-hand side of the second equation in \eqref{dispro2} by
$$
b_{h,1}(\vu_h, q) = b_h(\vu_h, q) -\langle\vu_0\cdot\vn -\overline{\vu_0\cdot\vn } ,q\rangle_{\partial\Omega_h}.
$$
System \eqref{dispro2} is non-symmetric. Non-symmetric mixed systems have been studied in \cite{Bernardi88,Ciarlet03, Nicolaides82}.
We follow the theoretical framework in \cite{Nicolaides82}. Since $\ah(\cdot,\cdot)$ is coercive on the entire $\vV_h$, 
stability of \eqref{dispro2} only requires that both $b_h(\cdot,\cdot)$ and $b_{h,1}(\cdot,\cdot)$ satisfy the discrete inf-sup condition
(see \cite{Nicolaides82} and also Remark 4.2.7 in \cite{boffi2013mixed}), which we prove in the following lemma:
\begin{lemma}\label{lb1lb2}
  \wangI{When $h$ and $\frac{s}{h}$ are sufficiently small,}
  both $b_h(\cdot,\cdot)$ and $b_{h,1}(\cdot,\cdot)$ satisfy the discrete $\inf$-$\sup$ condition under the modified norm $\|\cdot\|_{\nnorm}$, i.e.,
  \begin{equation}\label{inf-supnew}
  \begin{aligned}  
    \sup _{\vv \in \vV_h} \frac{|b_h(\vv,q)|}{\|\vv\|_{\nnorm}} &\gtrsim\|q\|_{0,\Omega_h} \qquad \forall\, q \in \Psi_h ,\\
    \sup _{\vv \in \vV_h} \frac{|b_{h,1}(\vv,q)|}{\|\vv\|_{\nnorm}} &\gtrsim\|q\|_{0,\Omega_h} \qquad \forall\, q \in \Psi_h.
    \end{aligned}
  \end{equation}
\end{lemma}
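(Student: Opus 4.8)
The plan is to handle the two inf-sup conditions in \eqref{inf-supnew} separately. The first one, for $b_h(\cdot,\cdot)$, is essentially free: Lemma \ref{lbb} already establishes it under $\|\cdot\|_{\vV_h}$, and since Lemma \ref{lem:normequivance} gives $\|\vv\|_{\nnorm}\approx\|\vv\|_{\vV_h}$, replacing the denominator by the equivalent norm only alters the hidden constant, so $\sup_{\vv}|b_h(\vv,q)|/\|\vv\|_{\nnorm}\gtrsim\|q\|_{0,\Omega_h}$ follows at once. All the real work goes into the condition for $b_{h,1}(\cdot,\cdot)$. For this I would reuse the test-function construction from Lemma \ref{lbb}: given $q\in\Psi_h$, extend it to $\Omega$, subtract its mean over $\Omega$ to get a mean-value-free $\bar q$ with $\|\bar q\|_{0,\Omega}\lesssim\|q\|_{0,\Omega_h}$, pick $\vw\in[H_0^1(\Omega)]^2$ with $\nabla\cdot\vw=\bar q$ and $\|\vw\|_{1,\Omega}\lesssim\|q\|_{0,\Omega_h}$, and test with $\vtQ_h\vw$.

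Writing $b_{h,1}(\vtQ_h\vw,q)=b_h(\vtQ_h\vw,q)-\langle(\vQ_0\vw)\cdot\vn-\overline{(\vQ_0\vw)\cdot\vn},q\rangle_{\partial\Omega_h}$, the proof of Lemma \ref{lbb} already delivers $|b_h(\vtQ_h\vw,q)|\ge(1-O(h^{-1/2}s))\|q\|_{0,\Omega_h}^2$. Hence the crux is to show that the boundary-correction term is a genuinely small perturbation, namely $O(s/h)\|q\|_{0,\Omega_h}^2$; this is the step on which the whole lemma hinges and the one I expect to be the main obstacle. A naive Cauchy--Schwarz that bounds $\|(\vQ_0\vw)\cdot\vn-\overline{(\vQ_0\vw)\cdot\vn}\|_{0,e}$ by a Poincar\'e inequality on $e$ while estimating $\|q\|_{0,e}$ by a trace inequality only yields an $O(1)$ bound, which is useless since it matches the order of the main term.

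The resolution is to exploit the mean-value-free structure on \emph{both} factors simultaneously: since $(\vQ_0\vw)\cdot\vn-\overline{(\vQ_0\vw)\cdot\vn}$ has zero mean on each $e\in\calE_h^B$, one may replace $q$ by $q-\bar q_e$ there, $\bar q_e$ being the edge average of $q$. I would then bound the first factor by $\|(\vQ_0\vw)\cdot\vn-\overline{(\vQ_0\vw)\cdot\vn}\|_{0,e}\lesssim h_e\|\nabla\vQ_0\vw\|_{0,e}\lesssim s\,h^{-1/2}\|\vw\|_{1,K}$, using Poincar\'e on $e$ together with the trace and inverse inequalities and the fact that $\nabla\vQ_0\vw$ is a polynomial (so no second derivatives of $\vw$ are needed), and bound the second factor by $\|q-\bar q_e\|_{0,e}\lesssim h_e\|\nabla q\|_{0,e}\lesssim s\,h^{-3/2}\|q\|_{0,K}$ in the same manner. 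Multiplying, summing over $e$, and using $h_e\approx s$ (Assumption \textbf{A5}) together with the fact that each $K\in\calT_h^B$ carries at most $O(h/s)$ boundary edges (Remark \ref{rem:A5A6}) to control the overcounting, the edge sums collapse to $O(s/h)\|\vw\|_{1,\Omega}\|q\|_{0,\Omega_h}\lesssim(s/h)\|q\|_{0,\Omega_h}^2$; this is exactly the extra order promised in Remark \ref{rem:overline}.

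Finally I would assemble the pieces. Combining the lower bound for $b_h(\vtQ_h\vw,q)$ with the $O(s/h)$ bound for the correction term and using the triangle inequality gives $|b_{h,1}(\vtQ_h\vw,q)|\ge(1-O(h^{-1/2}s)-O(s/h))\|q\|_{0,\Omega_h}^2\gtrsim\|q\|_{0,\Omega_h}^2$ once $h$ and $s/h$ are sufficiently small. Since $\|\vtQ_h\vw\|_{\nnorm}\approx\|\vtQ_h\vw\|_{\vV_h}\lesssim\|q\|_{0,\Omega_h}$ by Lemma \ref{lem:normequivance} and the bound on $\|\vtQ_h\vw\|_{\vV_h}$ obtained inside the proof of Lemma \ref{lbb}, dividing yields $\sup_{\vv}|b_{h,1}(\vv,q)|/\|\vv\|_{\nnorm}\ge|b_{h,1}(\vtQ_h\vw,q)|/\|\vtQ_h\vw\|_{\nnorm}\gtrsim\|q\|_{0,\Omega_h}$, which completes the argument.
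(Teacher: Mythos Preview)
Your proof is correct and follows the same overall strategy as the paper: the inf-sup for $b_h$ comes directly from Lemma~\ref{lbb} and the norm equivalence in Lemma~\ref{lem:normequivance}, and for $b_{h,1}$ you reuse the test function $\vtQ_h\vw$ from the proof of Lemma~\ref{lbb} and show that the boundary-correction term $\langle(\vQ_0\vw)\cdot\vn-\overline{(\vQ_0\vw)\cdot\vn},q\rangle_{\partial\Omega_h}$ is an $O(s/h)\|q\|_{0,\Omega_h}^2$ perturbation.

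The only place where your route differs is in how you extract that $O(s/h)$. You exploit the mean-zero structure to replace $q$ by $q-\bar q_e$ and then apply a one-dimensional Poincar\'e inequality on \emph{both} factors. The paper instead uses a weighted Cauchy--Schwarz with weights $h_K^{\pm 1/2}$, bounds the $q$-factor directly by $\|q\|_{0,\Omega_h}$ via \eqref{eq:q}, and applies Poincar\'e only on the $\vQ_0\vw$-factor. In fact your assertion that the single-Poincar\'e route yields only $O(1)$ is a slight miscount: if one first sums the squared edge norms over $\partial K\cap\partial\Omega_h$ and then applies the trace/inverse inequalities at the element level (as the paper does), the $O(h/s)$ overcounting you worry about never appears, and one obtains
\[
\Bigl(\sum_{K\in\calT_h^B} h_K^{-1}\|\vQ_0\vw\cdot\vn-\overline{\vQ_0\vw\cdot\vn}\|_{0,\partial K\cap\partial\Omega_h}^2\Bigr)^{1/2}\lesssim \frac{s}{h}\|\vw\|_{1,\Omega}
\]
directly. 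Your double-Poincar\'e trick is therefore an unnecessary detour, though it is valid and recovers the same $O(s/h)$ rate.
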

\begin{proof}
  Using lemmas \ref{lbb} and \ref{lem:normequivance}, one immediately gets the discrete inf-sup condition for $b_h(\cdot,\cdot)$ \wangI{when $h$ is sufficiently small}.
  As for $b_{h,1}(\cdot,\cdot)$, through a careful examination of the proof of Lemma \ref{lbb}, 
  \wangI{it is not hard
  to see that one only needs to prove 
  (using the same $\vw$ as in the proof of Lemma \ref{lbb}) 
  \begin{equation} \label{eq:lbbmod}
  \begin{aligned}
  \left(\sum_{K \in \calT_h^B} h_K^{-1}\|\vQ_0 \bm{w} \cdot \vn - \overline{\vQ_0 \bm{w} \cdot \vn }\|_{0,\partial K\cap\partial\Omega_h}^{2}\right)^{1 / 2}
  \left(\sum_{K \in \calT_h^B} h_K\|q\|_{0,\partial K\cap\partial\Omega_h}^{2}\right)^{1 / 2}
  &\le C(h,s) \|q\|_{0,\Omega_h}^2 \\
  &< \|q\|_{0,\Omega_h}^2,
  \end{aligned}
  \end{equation}
  with $C(h,s)$ strictly bounded below $1$.
  By \eqref{eq:q}, this reduces to proving a bound for
  $\sum_{K \in \calT_h^B} h_K^{-1}\|\vQ_0 \bm{w} \cdot \vn- \overline{\vQ_0 \bm{w} \cdot \vn }\|_{0,\partial K\cap\partial\Omega_h}^{2}$,
  which can be achieved by using the trace inequality and the inverse inequality}
    \liul{$$
  \begin{aligned}
    \left(\sum_{K \in \calT_h^B} h_K^{-1}\|\vQ_0 \bm{w} \cdot \vn - \overline{\vQ_0 \bm{w} \cdot \vn }\|_{0,\partial K\cap\partial\Omega_h}^{2} \right)^{\frac{1}{2}}
    &\lesssim  \left(\sum_{K \in \calT_h^B} h_K^{-1}\left(\sum_{e\subset\partial K\cap\partial\Omega_h}s^2\|\nabla(\vQ_0 \bm{w}) \|_{0,e}^{2} \right)\right)^{\frac{1}{2}}\\
    &\lesssim \left( s^2\sum_{K \in \calT_h^B} h_K^{-1}\|\nabla(\vQ_0 \bm{w}) \|_{0,\partial K\cap\partial\Omega_h}^{2} \right)^{\frac{1}{2}}\\
   &\lesssim \left( s^2\sum_{K \in \calT_h^B} h^{-2}\|\nabla(\vQ_0 \bm{w}) \|_{0,K}^{2} \right)^{\frac{1}{2}}\\
   &\lesssim \frac{s}{h}\|\bm{w}\|_{1,\Omega},
  \end{aligned}
  $$}
   
  \wangI{Thus the coefficient $C(h,s)$ in \eqref{eq:lbbmod} is of order $O(\frac{s}{h})$. It can be strictly bounded below $1$
    if $\frac{s}{h}$ is sufficiently small.
  }
  The rest of the proof follows directly from the proof of Lemma \ref{lbb}.
\end{proof}

\wangI{
  \begin{remark} \label{rem:sh}
    When the curved boundary of $\partial\Omega$ is approximated by multiple short edges,
    we usually have $\frac{s}{h}$ being sufficiently small and hence Lemma \ref{lb1lb2} holds.
    The proof does not work for the case of $s=h$, i.e., when no refinement is imposed on boundary edges.
    However, several numerical experiments to be presented in Section \ref{section5} show that
    the modified discrete scheme appears to be stable even when $s=h$.
    We suspect that a better discrete inf-sup condition, with less restrictions on $s$ and $h$, can be proved using more advanced skills.
  \end{remark}

  \begin{remark}
    The proof of Lemma \ref{lb1lb2} depends on the $O(s)$ approximation provided by $\overline{\vQ_0 \bm{w} \cdot \vn }$,
    which is one of the reasons for introducing this term, as pointed out in Remark \ref{rem:overline}.
    An interesting question is whether Lemma \ref{lb1lb2} holds or not with the piecewise average $\overline{\vQ_0 \bm{w} \cdot \vn }$
    replaced by a global average on $\partial\Omega_h$, which also satisfies the compatibility condition discussed in Remark \ref{rem:overline}.
    Our numerical experiments (not reported in this article) suggest that this is possible.
    But it remains to be proved and the proof appears to be non-trivial.
  \end{remark}
}


According to Theorem 3.1 in \cite{Nicolaides82}, the inf-sup conditions \eqref{inf-supnew} ensure that the discrete problem (\ref{dispro2}) admits a unique solution
\wangI{when $h$ and $\frac{s}{h}$ are sufficiently small}.
Moreover, the discrete operator in (\ref{dispro2}) is stable in the sense that the unique solution to
$$
\begin{cases}
\ah(\vu_h, \vv) + b_h(\vv, p_h)=F(\vv) \quad &\forall\, \vv \in \vV_h,\\
b_{h,1}(\vu_h, q)  =G(q) \quad &\forall\, q \in \Psi_h, 
\end{cases}
$$
satisfies
\begin{equation} \label{eq:stabilityNew}
  \|\vu_h\|_{\nnorm} + \|q\|_{\Psi_h} \lesssim \|F\|_{\vV_h'} + \|G\|_{\Psi_h'}.
\end{equation}

\subsection{Error analysis}
Next we analyze the error of the modified discretization (\ref{dispro2}).
Again we first derive the error equations.
An argument similar to the one used in the proof of Lemma \ref{lemma1} shows that
\begin{lemma}\label{lemma11}
The solution $\vu$ and $p$ to problem (\ref{primalproblem}) satisfy
$$
  \ah\left(\vtQ_h\vu, \vv\right)+b_h\left(\vv,  \bbQ_hp\right)=\sh (\vQ_h \vu, \vv)+l_{s,1}(\vv)-l_{\mathrm{div}}(\vv)
  \qquad \forall\, \vv \in \vV_h,
$$
where $l_{\mathrm{div}}(\cdot)$ is defined as in Lemma \ref{lemma1} and $l_{s,1}(\cdot)$ is defined by
$$
l_{s,1}(\vv) =\rho \sum_{K \in \calT_h} h_K^{-1}\left\langle \vQ_b\vu \cdot \vtn,\left(\vv_0-\vv_b\right) \cdot \vtn \right\rangle_{\partial K\cap\partial\Omega_h}.
$$
\end{lemma}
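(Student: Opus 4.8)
The plan is to mirror the proof of Lemma \ref{lemma1} given in Appendix \ref{appendix1}, tracking carefully only the terms affected by replacing $\vn$ with $\vtn$ in the stabilization form, since everything else is unchanged. I begin by expanding $\ah(\vtQ_h\vu,\vv)=(\vQ_0\vu,\vv_0)_{\Omega_h}+\sh(\vtQ_h\vu,\vv)$ and handling the volume part first. Because $\vv_0|_K\in[P_\alpha(K)]^2$, the definition of the $L^2$ projection gives $(\vQ_0\vu,\vv_0)_{\Omega_h}=(\vu,\vv_0)_{\Omega_h}$, and the first equation of \eqref{primalproblem} lets me substitute $\vu=-\nabla p$.

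Next I would integrate by parts element by element. On each $K$ one has $\nabla\cdot\vv_0\in P_{\alpha-1}(K)\subseteq P_\sigma(K)$ thanks to the condition $\sigma\ge\beta-1=\alpha-1$ in \eqref{eq:alphabetagamma}, so $p$ may be replaced by $\bbQ_h p$ inside the volume integral $(p,\nabla\cdot\vv_0)_K$. Expanding $b_h(\vv,\bbQ_h p)$ via the definition of $\nabla_w\cdot$, the interior gradient contributions $(\nabla\bbQ_h p,\vv_0)_K$ cancel those produced by integration by parts, leaving only edge terms. Splitting $\vv_0\cdot\vn=(\vv_0-\vv_b)\cdot\vn+\vv_b\cdot\vn$ isolates the factors $\langle\bbQ_h p-p,(\vv_0-\vv_b)\cdot\vn\rangle_{\partial K}$, whose sum over $\calT_h$ is exactly $-l_{\mathrm{div}}(\vv)$, together with a residual $\sum_{K}\langle p,\vv_b\cdot\vn\rangle_{\partial K}$. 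Since $p\in H^1(\Omega)$ is single-valued across each interior edge where the two outward normals are opposite, and $\vv_b=\vzero$ on every $e\in\calE_h^B$, this residual vanishes. Hence the volume part contributes precisely $-l_{\mathrm{div}}(\vv)$, identical to Lemma \ref{lemma1}.

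It remains to split the stabilization term. Using $\vtQ_b\vu=\vQ_b\vu$ on $\calE_h^I$ and $\vtQ_b\vu=\vzero$ on $\calE_h^B$, I would write $\sh(\vtQ_h\vu,\vv)=\sh(\vQ_h\vu,\vv)+\rho\sum_{K\in\calT_h^B}h_K^{-1}\langle\vQ_b\vu\cdot\vtn,(\vv_0-\vv_b)\cdot\vtn\rangle_{\partial K\cap\partial\Omega_h}$, the second piece being exactly $l_{s,1}(\vv)$. Combining the three contributions then yields the claimed identity. The one genuine departure from the proof of Lemma \ref{lemma1}, and the point that warrants care, is that $\vtn$ is the pullback of the curved normal through $\vsig$ and is therefore \emph{not} constant along $e$; consequently $(\vv_0-\vv_b)\cdot\vtn$ is no longer a polynomial on $e$, so the projection identity $\langle\vQ_b\vu\cdot\vn,\cdot\rangle_e=\langle\vu\cdot\vn,\cdot\rangle_e$ that turned $\vQ_b\vu$ into $\vu$ in the original argument fails. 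This is precisely why $l_{s,1}(\vv)$ must be stated in terms of $\vQ_b\vu$ rather than $\vu$, in contrast with $l_s(\vv)$ of Lemma \ref{lemma1}.
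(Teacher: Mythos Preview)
Your proof is correct and follows essentially the same route as the paper's: the paper explicitly says the argument is identical to that of Lemma \ref{lemma1} except that \eqref{2.11} is replaced by the splitting $\sh(\vtQ_h\vu,\vv)=\sh(\vQ_h\vu,\vv)+l_{s,1}(\vv)$, which is precisely what you do. Your closing remark explaining why the projection identity $\langle\vQ_b\vu\cdot\vn,\cdot\rangle_e=\langle\vu\cdot\vn,\cdot\rangle_e$ no longer applies (because $(\vv_0-\vv_b)\cdot\vtn$ is not a polynomial on $e$) is a helpful clarification that the paper leaves implicit.
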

\begin{proof}
  The proof is similar to the one for Lemma \ref{lemma1} given in Appendix \ref{appendix1}.
  One simply replaces \eqref{2.11} by
  $$
\begin{aligned}
\sh(\vtQ_h\vu, \vv) &= \rho \sum_{K \in \calT_h} h_K^{-1}\langle(\vQ_0\vu-\vtQ_b\vu) \cdot \vtn,(\vv_0-\vv_b) \cdot \vtn\rangle_{\partial K},&\\
&=\sh(\vQ_h\vu, \vv) + \rho \sum_{K \in \calT_h} h_K^{-1}\langle{\vQ}_b\vu \cdot \vtn,(\vv_0-\vv_b) \cdot \vtn\rangle_{\partial K\cap\partial\Omega_h},&\\
&= \sh(\vQ_h\vu, \vv) + l_{s,1}(\vv).
\end{aligned}
$$
\end{proof}

Let $\vu$, $p$ be the solution to problem (\ref{primalproblem}),
and $\vu_h$, $p_h$ be the solution to the modified weak Galerkin formulation (\ref{dispro2}).
Consider the error $\ve_{\vu}$ and $e_p$ defined in \eqref{eq:errdef}.
By lemmas \ref{lemma11} and \ref{lemma2}, the error to problem \eqref{dispro2} satisfies
\begin{equation}\label{erroreuqation2}
\begin{cases}
\ah\left(\ve_{\vu}, \vv\right)+b_h\left(\vv, e_p\right) =\sh(\vQ_h \vu, \vv)+l_{s,1}(\vv)-l_{\text{div}}(\vv) \quad &\forall \vv\in\vV_h, \\
b_{h,1}\left(\ve_{\vu}, q\right)= l_{b,1}(q) +l_{b,2}(q)\quad &\forall q \in \Psi_h,
\end{cases}
\end{equation}
where
$$
\begin{aligned}
l_{b,1}(q) &= \langle (\vu-\vQ_0\vu)\cdot\vn - \overline{(\vu-\vQ_0\vu)\cdot\vn }, q\rangle_{\partial\Omega_h},\\
l_{b,2}(q) &= \langle\overline{\vu\cdot\vn },q\rangle_{\partial\Omega_h}.\\
\end{aligned}
$$

We first derive an upper bound for $\sh(\cdot,\cdot)$, $l_{s,1}(\cdot)$ and $l_{b,1}(\cdot)$.
\begin{lemma} \label{Berize2}For any $ 0 \le r\le \alpha$ and $0 \le t \le \sigma$,
  assume that $\vu\in \vV\cap [H^{\max \{r+1,3\}}(\Omega)]^2, p\in\Psi\cap H^{t+1}(\Omega)$ are the solutions to problem (\ref{primalproblem})-\eqref{primalcondition},
  and $(\vv,q) \in (\vV_h,\Psi_h)$; then we have
\begin{equation*}
\begin{aligned}
\sh(\vQ_h\vu, \vv)&\lesssim  h^r\|\vu\|_{r+1,\Omega} \|\vv\|_{\nnorm},\\
l_{s,1}(\vv)&\lesssim \left( {h^{-\frac{1}{2}}s^2\|\vu\|_{3,\Omega}} + h^r\|\vu\|_{r+1,\Omega} \right) \|\vv\|_{\nnorm}, \\
l_{b,1}(q) &\lesssim h^r\|\vu\|_{r+1,\Omega} \|q\|_{0,\Omega_h}.
\end{aligned}
\end{equation*}
\end{lemma}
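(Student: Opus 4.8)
I need to bound three quantities, each measuring a boundary defect. The plan is to handle them in increasing order of difficulty, reusing the techniques already developed in lemmas \ref{Berize} and \ref{lemma3.13}, but now with $\vtn$ in place of $\vn$ in the stabilization terms.

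For $\sh(\vQ_h\vu,\vv)$, the estimate is essentially identical to the bound for $c(\vQ_h\vu,\vv)$ in Lemma \ref{Berize}. I would apply Cauchy--Schwarz to split off the factor $\bigl(\sum_K h_K^{-1}\|(\vv_0-\vv_b)\cdot\vtn\|_{0,\partial K}^2\bigr)^{1/2}$, which is controlled by $\|\vv\|_{\nnorm}$ thanks to Lemma \ref{lem:normequivance} (or directly by the definition of the modified norm). The remaining factor $\bigl(\sum_K h_K^{-1}\|(\vQ_0\vu-\vQ_b\vu)\cdot\vtn\|_{0,\partial K}^2\bigr)^{1/2}$ is handled exactly as in \eqref{eq:sbound}: since $\vQ_0\vu=\vQ_b(\vQ_0\vu)$ on each edge, I bound it by $\|(\vQ_0\vu-\vu)\cdot\vtn\|$, drop the unit vector $\vtn$, and invoke the trace inequality together with the approximation property of $\vQ_0$ (Lemma \ref{Qh}) to get $h^r\|\vu\|_{r+1,\Omega}$.

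For $l_{b,1}(q)$, I would exploit the mean-zero structure of the integrand. Writing $w=(\vu-\vQ_0\vu)\cdot\vn$, the term is $\langle w-\bar w,q\rangle_{\partial\Omega_h}$ where $\bar w$ is the edgewise average of $w$; subtracting the average lets me also subtract the edgewise average of $q$, so I can pair $w$ against $q-\bar q$ on each boundary edge. Applying Cauchy--Schwarz edge-by-edge, then the trace and approximation estimates for $\vu-\vQ_0\vu$ to bound $\|w\|_{0,e}$ by $h^{r+1/2}\|\vu\|_{r+1}$-type quantities, and finally the inverse/trace machinery to absorb the $q$ factor into $\|q\|_{0,\Omega_h}$ as in \eqref{eq:q}, should deliver the claimed $h^r\|\vu\|_{r+1,\Omega}\|q\|_{0,\Omega_h}$.

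The main obstacle is $l_{s,1}(\vv)$, which carries the sharp $h^{-1/2}s^2$ consistency rate and is the whole point of the modification. Here the key is that $\vu\cdot\vtn=\vu\cdot\vn_{\we}=0$ on the true boundary $\we$ (pulled back via $\vsig$), so I can rewrite the boundary integral $\langle\vQ_b\vu\cdot\vtn,(\vv_0-\vv_b)\cdot\vtn\rangle_e$ as a difference between integrals on $e$ and on $\we$, then convert it to an integral over the crescent $M_e$ via integration by parts exactly as in the proof of Lemma \ref{lemma3.13}. I expect to split $\vQ_b\vu\cdot\vtn$ into $(\vQ_b\vu-\vu)\cdot\vtn$, whose boundary contribution gives the $h^r\|\vu\|_{r+1,\Omega}$ approximation term, plus a $\vu\cdot\vtn$ piece that, using $\vu\cdot\vn_{\we}|_{\we}=0$ and the bounds $\sum_e\|q\|_{0,M_e}^2\lesssim h^{-1}s^2\|q\|^2$ and $\sum_e\|\nabla q\|_{0,M_e}^2\lesssim h^{-3}s^2\|q\|^2$ applied to the polynomial extension $((\vv_0-\vv_b)\cdot\vtn)^*$, yields the improved $h^{-1/2}s^2\|\vu\|_{3,\Omega}$ rate. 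The delicate point — and the reason $H^3$ regularity on $\vu$ is required — is that one gains an extra power of $s$ compared to the original $l_s$ estimate precisely because the $\vtn$-based stabilization annihilates the leading-order normal mismatch, so the remaining defect involves a second-order Taylor expansion of $\vu$ across $M_e$ rather than a first-order one.
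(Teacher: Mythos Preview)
Your plan for $\sh(\vQ_h\vu,\vv)$ matches the paper exactly. For $l_{b,1}(q)$ you are slightly over-engineering: since the edgewise mean subtraction is an $L^2$-projection, one simply drops it via $\|w-\bar w\|_{0,e}\le\|w\|_{0,e}$ and then applies Cauchy--Schwarz, the trace inequality, the approximation of $\vQ_0$, and \eqref{eq:q}; no need to transfer the average to $q$.

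The real gap is in $l_{s,1}(\vv)$. Your plan to ``convert it to an integral over the crescent $M_e$ via integration by parts exactly as in the proof of Lemma \ref{lemma3.13}'' breaks down for two reasons. First, on a boundary edge $e$ the quantity $(\vv_0-\vv_b)\cdot\vtn=\vv_0\cdot\vtn$ is \emph{not} a polynomial in the edge parameter, because $\vtn$ on $e$ is by definition the pullback $\vtn\circ\vsig$ of the curved-boundary normal and therefore varies along $e$; hence there is no polynomial extension $((\vv_0-\vv_b)\cdot\vtn)^*$ to which your crescent estimates could apply. Second, the divergence-theorem identity used in Lemma \ref{lemma3.13} relies on $\vn_e$ being the outward normal of $M_e$ along $e$; the pullback $\vtn$ does not play that role, so the crucial identity $\int_e\vu\cdot\vn_e\,\phi-\int_{\we}\vu\cdot\vn_{\we}\,\phi^*=\int_{M_e}\nabla\cdot(\vu\phi^*)$ is simply not available with $\vtn$ in place of $\vn_e$.

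The paper takes a direct pointwise route for the $\vu\cdot\vtn$ piece (after the same splitting $\vQ_b\vu\cdot\vtn=\vu\cdot\vtn-(\vu-\vQ_b\vu)\cdot\vtn$ that you propose). Since $\vtn$ at $(\hat x,0)\in e$ is by construction the true normal at $(\hat x,\gamma(\hat x))\in\we$, the fundamental theorem of calculus along the vertical segment, together with $\vu\cdot\vtn|_{\we}=0$, gives
\[
|(\vu\cdot\vtn)(\hat x,0)|=\Bigl|\Bigl(\int_0^{\gamma(\hat x)}\partial_{\hat y}\vu\,d\hat y\Bigr)\cdot\vtn(\hat x,\gamma(\hat x))\Bigr|\lesssim s^2\|\nabla\vu\|_{L^\infty(\Omega)},
\]
using $|\gamma(\hat x)|\lesssim s^2$ from Assumption \textbf{A4}. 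Squaring, integrating over $e$, summing, and applying the Sobolev embedding $H^3(\Omega)\hookrightarrow W^{1,\infty}(\Omega)$ then gives $\sum_K h_K^{-1}\|\vu\cdot\vtn\|_{0,\partial K\cap\partial\Omega_h}^2\lesssim h^{-1}s^4\|\vu\|_{3,\Omega}^2$. So the extra power of $s$ comes not from a second-order Taylor expansion of $\vu$, but from the $O(s^2)$ crescent thickness in \textbf{A4} combined with the specific choice of $\vtn$ as the pullback normal; and the $H^3$ regularity is needed precisely for the $L^\infty$ Sobolev embedding, not for a higher-order remainder.
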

\begin{proof}
  The bound for $\sh(\cdot,\cdot)$ follows from a trivial modification of \eqref{eq:sbound}.
  The bound for $l_{b,1}(\cdot)$ follows from the Cauchy-Schwarz inequality, the trace inequality, the approximation property of $\vQ_0$
  and Inequality \eqref{eq:q}.
  We are left with estimating $l_{s,1}(\cdot)$. Note that
  $$
  \begin{aligned}
    l_{s,1}(\vv)
    &= \rho \sum_{K \in \calT_h^B} h_K^{-1}\left\langle \vQ_b\vu \cdot \vtn,\left(\vv_0-\vv_b\right) \cdot \vtn \right\rangle_{\partial K\cap \partial\Omega_h}\\
    &\lesssim \left(\sum_{\e\in\calT_h^B}h^{-1}_K\| \vQ_b \vu\cdot \vtn\|_{0,\partial\e\cap\partial\Omega_h}^2\right)^{1/2} \|\vv\|_{\nnorm},\\
    &\lesssim \left(\sum_{\e\in\calT_h^B}h^{-1}_K\| \vu\cdot \vtn\|_{0,\partial\e\cap\partial\Omega_h}^2
       + \sum_{\e\in\calT_h^B}h^{-1}_K\| (\vu-\vQ_b\vu)\cdot \vtn\|_{0,\partial\e\cap\partial\Omega_h}^2 \right)^{1/2} \|\vv\|_{\nnorm} \\
    &\lesssim \left(\sum_{\e\in\calT_h^B}h^{-1}_K\| \vu\cdot \vtn\|_{0,\partial\e\cap\partial\Omega_h}^2 + h^{2r}\|\vu\|_{r+1}^2\right)^{1/2} \|\vv\|_{\nnorm},
  \end{aligned} 
  $$
  where in the last step we used the same argument as in \eqref{eq:sbound},
  together with the fact that $\|\vu-\vQ_b\vu\|_e \le \|\vu-\vQ_0\vu\|_e$ on each edge.
  
  Now we only need to estimate $\sum_{\e\in\calT_h^B} h^{-1}_K \|\vu\cdot\vtn\|_{0,\partial\e\cap\partial\Omega_h}^2$.
  We use the same argument as in the proof of Lemma 2.4.4 in \cite{wgcurved}.
  Reference \cite{wgcurved} is written in Chinese. For reader's convenience, we present the full detail of the estimate below.
  The key is to use $\vu\cdot \vtn|_{\partial\Omega}=0$ and the map $\vsig$ defined in Assumption {\bf A4}.
  In the local coordinate system $\hx$-$\hy$ as shown in Figure \ref{sigma}, the value of $\vu\cdot\vtn$
  at a point $(\hx,0)\in e$ satisfies
  $$
  \begin{aligned}
    \left|(\vu\cdot\vtn)(\hx,0)\right|
    &= \left|\left(\vu(\hx,\gamma(\hx)) - \int_0^{\gamma(\hx)} \frac{\partial \vu}{\partial \hy}\dhy  \right) \cdot\vtn(\hx,\gamma(\hx))\right| \\
    &=\left|\left(\int_0^{\gamma(\hx)} \frac{\partial \vu}{\partial \hy}\dhy\right)  \cdot\vtn(\hx, \gamma(\hx))\right|\\
    &\lesssim s^2 \|\nabla \vu\|_{L^{\infty}(\Omega)} .
  \end{aligned}
  $$
  Therefore, on each $e\in \calE_h^B$, one has
  $$
    \|\vu\cdot\vtn\|_e^2 = \int_0^{h_e} \left|(\vu\cdot\vtn)(\hx,0)\right|^2 \dhx \lesssim s^5 \|\nabla \vu\|_{L^{\infty}(\Omega)}^2. 
  $$
  Hence
  $$
  \sum_{\e\in\calT_h^B} h^{-1}_K \|\vu\cdot\vtn\|_{0,\partial\e\cap\partial\Omega_h}^2
   \lesssim  h^{-1} s^4 \|\nabla \vu\|_{L^{\infty}(\Omega)}^2 \lesssim h^{-1} s^4 \|\vu\|^2_{3,\Omega}.
  $$
  Combining the above, this completes the proof of the lemma. 
\end{proof}

Then we derive an upper bound for $l_{b,2}(\cdot)$.
\begin{lemma} \label{Berize3}
  For any $ 0 \le r\le \alpha$ and $0 \le t \le \sigma$,
  assume that 
  $\vu$ is the solution to (\ref{primalproblem})-\eqref{primalcondition},
  and $(\vv,q) \in (\vV_h,\Psi_h)$; then we have
\begin{equation*}
\begin{aligned}
l_{b,2}(q) &\lesssim h^{-\frac{1}{2}}s^2\|\vu\|_{3,\Omega} \|q\|_{0,\Omega_h}.
\end{aligned}
\end{equation*}
\end{lemma}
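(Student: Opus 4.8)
The plan is to exploit the essential boundary condition $\vu\cdot\vtn=0$ on $\partial\Omega$ together with the divergence theorem on each crescent $M_e$, instead of estimating $\vu\cdot\vn$ pointwise. First I would observe that $\overline{\vu\cdot\vn}$ is constant on each boundary edge $e\in\calE_h^B$, so that
\[
l_{b,2}(q)=\sum_{e\in\calE_h^B}\big(\overline{\vu\cdot\vn}|_e\big)\int_e q\ds.
\]
This is exactly the point of defining the correction term with a piecewise average: it decouples $\vu$ from $q$ and, as we will see, removes the $\nabla q$ contribution that degraded the bound for $l_b$ in Lemma \ref{lemma3.13}.

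Next I would apply the divergence theorem on $M_e$. Since $M_e$ is bounded by the straight edge $e$ and the curved arc $\we$, whose outward normals (relative to $M_e$) are $-\vn$ and $\vtn$ respectively, and since $\vu\cdot\vtn=0$ on $\we$, one obtains
\[
\int_e\vu\cdot\vn\ds=-\int_{M_e}\nabla\cdot\vu\dx,\qquad\text{so that}\qquad \overline{\vu\cdot\vn}|_e=-\frac{1}{|e|}\int_{M_e}\nabla\cdot\vu\dx.
\]
This identity is the heart of the argument. A naive splitting $\vu\cdot\vn=\vu\cdot\vtn+\vu\cdot(\vn-\vtn)$ gives only a pointwise bound of order $O(s)$, because $|\vn-\vtn|\lesssim s$ by Assumption \textbf{A4}; the divergence theorem captures the cancellation hidden in the average and trades it for the volume integral of $\nabla\cdot\vu$ over the thin crescent, where the smallness $|M_e|\lesssim h_e s^2$ can be used.

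With this identity in hand the remaining steps are routine. Using Cauchy–Schwarz on $M_e$ and on $e$, together with $|M_e|\lesssim h_e s^2$ and $|e|=h_e$, I would bound each summand by $s\,\|\nabla\cdot\vu\|_{0,M_e}\|q\|_{0,e}$, and then apply Cauchy–Schwarz in the sum to get
\[
|l_{b,2}(q)|\lesssim s\Big(\sum_{e\in\calE_h^B}\|\nabla\cdot\vu\|_{0,M_e}^2\Big)^{1/2}\Big(\sum_{e\in\calE_h^B}\|q\|_{0,e}^2\Big)^{1/2}.
\]
For the first factor I would invoke Lemma \ref{lem:OmegaOmegah} with $v=\nabla\cdot\vu\in H^1(\Omega)$, giving $\sum_e\|\nabla\cdot\vu\|_{0,M_e}^2\lesssim s^2\|\nabla\cdot\vu\|_{1,\Omega}^2\lesssim s^2\|\vu\|_{2,\Omega}^2$; for the second factor I would use the trace and inverse inequalities exactly as in \eqref{eq:q}, which yield $\sum_e\|q\|_{0,e}^2\lesssim h^{-1}\|q\|_{0,\Omega_h}^2$. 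Combining the two produces $|l_{b,2}(q)|\lesssim h^{-1/2}s^2\|\vu\|_{2,\Omega}\|q\|_{0,\Omega_h}$, and since $\|\vu\|_{2,\Omega}\le\|\vu\|_{3,\Omega}$ the stated estimate follows.

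The main obstacle is conceptual rather than computational: one must recognize that the pointwise size of $\vu\cdot\vn$ on the flat edge is only $O(s)$, so any approach estimating the integrand directly is doomed to lose a factor of $s$. The improvement to $O(s^2)$ comes entirely from combining the edgewise-constant average with the divergence theorem on $M_e$ powered by $\vu\cdot\vtn=0$; everything else is bookkeeping with the already-proved trace, inverse, and crescent estimates.
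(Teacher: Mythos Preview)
Your proof is correct and shares the paper's key step: converting $\int_e\vu\cdot\vn\ds$ into $\int_{M_e}\nabla\cdot\vu\dx$ via the divergence theorem on the crescent $M_e$ together with $\vu\cdot\vtn|_{\we}=0$. The difference is only in how the crescent integral is subsequently estimated. The paper bounds it pointwise, $\big|\int_{M_e}\nabla\cdot\vu\dx\big|\le |M_e|\,\|\nabla\cdot\vu\|_{L^\infty(\Omega)}\lesssim h_e s^2\|\vu\|_{1,\infty,\Omega}\lesssim h_e s^2\|\vu\|_{3,\Omega}$ (using the 2D embedding $H^3\hookrightarrow W^{1,\infty}$), obtains the uniform edge bound $|\overline{\vu\cdot\vn}|_e|\lesssim s^2\|\vu\|_{3,\Omega}$, and then pairs this with $\|q\|_{0,\partial\Omega_h}$. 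You instead apply Cauchy--Schwarz on $M_e$ and feed the resulting factor $\big(\sum_e\|\nabla\cdot\vu\|_{0,M_e}^2\big)^{1/2}$ into Lemma~\ref{lem:OmegaOmegah}. This is slightly more efficient: it avoids the $L^\infty$ embedding and produces the same $h^{-1/2}s^2$ rate under the weaker assumption $\vu\in[H^2(\Omega)]^2$ rather than $[H^3(\Omega)]^2$.
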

\begin{proof}
 We first derive an upper bound for $ \overline{\vu\cdot\vn }|_e$ on each $e\in \calE_h^B$ using integration by parts
  {\liul{\begin{equation*}
\begin{aligned}
 \big|\,\overline{\vu\cdot\vn }|_e\,\big|&= \left|\frac{1}{|e|}\int_e\vu\cdot\vn \ds\right| 
 =\frac{1}{|e|}\left|\int_e\vu\cdot\vn \ds-\int_{\tilde{e}}\vu\cdot\vtn \ds\right| \\
 &= \frac{1}{|e|}\left|\int_{M_e}\nabla\cdot\vu \dx\right|\\
 &\lesssim s^{-1}|M_e|\|\vu\|_{1,\infty,\Omega}\\
 &\lesssim s^2\|\vu\|_{3,\Omega}.
\end{aligned}
\end{equation*}}}
Combining the above with the trace and the inverse inequalities, we have the bound for $l_{b,2}(q)$ 
{\liul{\begin{equation*}
\begin{aligned}
l_{b,2}(q)& =\langle\overline{\vu\cdot\vn },q\rangle_{\partial\Omega_h}
\leq \sup_{e\in\calE_h^B} \big|\,\overline{\vu\cdot\vn }|_e\,\big|\int_{\partial\Omega_h}|q|ds\\
&\lesssim  s^2\|\vu\|_{3,\Omega}\|q\|_{0,\partial\Omega_h}\\
&\lesssim h^{-\frac{1}{2}} s^2\|\vu\|_{3,\Omega}\left(\sum_{K\in\calT_h^B}h_K\|q\|_{0,\partial K \cap \partial\Omega_h}^2\right)^{\frac{1}{2}}\\
&\lesssim h^{-\frac{1}{2}} s^2\|\vu\|_{3,\Omega}\|q\|_{\Omega_h}.
\end{aligned}
\end{equation*}}}
\end{proof}

Now, we are able to derive the error estimate:
\begin{theorem}\label{th2}
  Let $\vu$ and $p$ satisfy the condition in Lemma \ref{Berize2}. The error $\bm{e}_{\vu}$ and $e_p$ satisfy
$$
\left\|\ve_{ \vu}\right\|_{\nnorm}+\left\|e_{ p}\right\|_{0,\Omega_h} \lesssim  h^{r }\|\vu\|_{r +1, \Omega }+h^{t +1}\|p\|_{t +1, \Omega }+h^{-\frac{1}{2}}s^2\|\vu\|_{3,\Omega}.
$$
where $0 \le r\le \alpha $ and $0 \le t \le \sigma$.
\end{theorem}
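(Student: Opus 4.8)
The plan is to recognize the error system \eqref{erroreuqation2} as a concrete instance of the abstract non-symmetric saddle-point problem whose stability was already established in \eqref{eq:stabilityNew}, and then merely to feed in the functional bounds proved in the preceding lemmas. Concretely, I would set
$$
F(\vv) = \sh(\vQ_h \vu, \vv) + l_{s,1}(\vv) - l_{\mathrm{div}}(\vv), \qquad G(q) = l_{b,1}(q) + l_{b,2}(q),
$$
so that $(\ve_{\vu}, e_p)$ is precisely the solution pair driving \eqref{eq:stabilityNew}. The theorem then collapses to estimating $\|F\|_{\vV_h'}$ and $\|G\|_{\Psi_h'}$, since the stability constant in \eqref{eq:stabilityNew} is independent of the discretization parameters.

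Next I would bound the two functionals term by term. For $F$, the three contributions are controlled by the first two estimates of Lemma \ref{Berize2} (for $\sh(\vQ_h\vu,\cdot)$ and $l_{s,1}$) together with the $l_{\mathrm{div}}$ estimate carried over from Lemma \ref{Berize}; taking the supremum over $\vv$ in the unit ball yields
$$
\|F\|_{\vV_h'} \lesssim h^r\|\vu\|_{r+1,\Omega} + h^{-\frac{1}{2}}s^2\|\vu\|_{3,\Omega} + h^{t+1}\|p\|_{t+1,\Omega}.
$$
For $G$, the third estimate of Lemma \ref{Berize2} controls $l_{b,1}$ and Lemma \ref{Berize3} controls $l_{b,2}$, giving
$$
\|G\|_{\Psi_h'} \lesssim h^r\|\vu\|_{r+1,\Omega} + h^{-\frac{1}{2}}s^2\|\vu\|_{3,\Omega}.
$$

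One small bookkeeping point is that the $l_{\mathrm{div}}$ bound from Lemma \ref{Berize} is phrased in the original norm $\|\cdot\|_{\vV_h}$, whereas the other functional bounds and the stability estimate \eqref{eq:stabilityNew} are expressed in $\|\cdot\|_{\nnorm}$. I would therefore apply the norm equivalence of Lemma \ref{lem:normequivance} to recast the $l_{\mathrm{div}}$ contribution in $\|\cdot\|_{\nnorm}$ at the cost of a harmless constant. Inserting the two displayed bounds into \eqref{eq:stabilityNew} and collecting like terms (the repeated $h^r\|\vu\|_{r+1,\Omega}$ and $h^{-\frac{1}{2}}s^2\|\vu\|_{3,\Omega}$ factors simply coalesce) produces the claimed three-term estimate.

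The honest remark is that essentially no obstacle remains at this stage: the genuine difficulty was front-loaded into the functional estimates of Lemmas \ref{Berize2}--\ref{Berize3} (in particular the $s^2$ gain in $l_{s,1}$ and $l_{b,2}$, which is exactly where the boundary correction and the map $\vsig$ pay off) and into securing stability through the non-symmetric discrete inf-sup conditions of Lemma \ref{lb1lb2}. The only caveat worth flagging is that \eqref{eq:stabilityNew} was derived under the standing hypothesis that $h$ and $\frac{s}{h}$ are sufficiently small, so I would carry that same assumption into the statement of the theorem.
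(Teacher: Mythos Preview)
Your proposal is correct and matches the paper's own proof essentially line for line: the paper also invokes the stability result \eqref{eq:stabilityNew} (via the framework of \cite{Nicolaides82}) applied to the error system \eqref{erroreuqation2}, and then bounds the right-hand side using Lemmas \ref{Berize}, \ref{Berize2}, and \ref{Berize3}. Your explicit mention of the norm equivalence from Lemma \ref{lem:normequivance} to reconcile the $l_{\mathrm{div}}$ bound is a nice bookkeeping detail that the paper leaves implicit.
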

\begin{proof}
  The estimate follows from the theoretical framework in \cite{Nicolaides82}, the stability result \eqref{eq:stabilityNew},
  the error equation (\ref{erroreuqation2}), lemmas \ref{Berize} and \ref{Berize2}-\ref{Berize3}.
\end{proof}

\wangI{Similar to Corollary \ref{lemma3.16}, one can adjust $s$ to improve the error estimate. A simple calculation shows that
  an optimal $O(h^{\alpha})$ convergence requires $s= O(\sqrt{h^{\alpha+\frac{1}{2}}})$, which is less demanding comparing to
  the $s= O(h^{\alpha+\frac{1}{2}})$ in Corollary \ref{lemma3.16}. However, we still need to check the requirements
  for the discrete inf-sup condition stated in Lemma \ref{lb1lb2}.
  When $\alpha \ge 2$, by taking $s= O(h^{\alpha+\frac{1}{2}})$ one gets $\frac{s}{h}\le O(h^{\frac{1}{4}})$
  which goes to $0$ as $h$ goes to $0$, i.e., $\frac{s}{h}$ is sufficiently small whe $h$ is small.
  It is different for the case $\alpha=1$, where $O(\sqrt{h^{\alpha+\frac{1}{2}}}) = O(h^{\frac{3}{4}}) > O(h)$.
  Since we always have $s\le h$, optimal convergence holds as long as $\frac{s}{h}$ is sufficiently small.
 
\begin{corollary}\label{lemma4.10}
  Assuming the polynomial approximation error reaches the optimal $O(h^{\alpha})$ with $\alpha\ge 1$.
  For $\alpha=1$, when $h$ and $\frac{s}{h}$ are sufficiently small, the errors $\bm{e}_{\vu}$ and $e_p$ reach an optimal
  $$
  \left\|\ve_{ {\vu}}\right\|_{\nnorm}+\left\|e_{ p}\right\|_{0,\Omega_h}\lesssim O(h).
  $$
  For $\alpha \ge 2$, when $s= O(\sqrt{h^{\alpha+\frac{1}{2}}})$ and $h$ is sufficiently small,
  the errors $\bm{e}_{\vu}$ and $e_p$ reach an optimal  
  $$
  \begin{aligned}
    \left\|\ve_{ {\vu}}\right\|_{\nnorm}+\left\|e_{ p}\right\|_{0,\Omega_h} \lesssim O(h^{\alpha}).
  \end{aligned}
  $$
\end{corollary}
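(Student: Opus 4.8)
The plan is to obtain both estimates directly from Theorem \ref{th2} by fixing the optimal regularity indices and then balancing the geometric consistency term $h^{-\frac{1}{2}}s^2\|\vu\|_{3,\Omega}$ against the two approximation terms. First I would fix $\alpha=\beta$ and $\sigma=\alpha-1$, which is compatible with \eqref{eq:alphabetagamma}, and take $r=\alpha$, $t=\sigma=\alpha-1$ in Theorem \ref{th2}. Under the assumed regularity ($\vu\in[H^{\alpha+1}(\Omega)]^2$ and $p\in H^{\alpha}(\Omega)$, both covered by the hypothesis of Lemma \ref{Berize2}), the first two terms become $h^{\alpha}\|\vu\|_{\alpha+1,\Omega}$ and $h^{(\alpha-1)+1}\|p\|_{\alpha,\Omega}=h^{\alpha}\|p\|_{\alpha,\Omega}$, so the approximation part is already optimal at $O(h^{\alpha})$. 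It then remains to force $h^{-\frac{1}{2}}s^2=O(h^{\alpha})$, i.e. $s=O(\sqrt{h^{\alpha+\frac{1}{2}}})$, while at the same time satisfying the hypotheses of the stability result: $h$ and $\frac{s}{h}$ must be small enough for Lemma \ref{lb1lb2}, and hence the stability bound \eqref{eq:stabilityNew}, to apply.

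The delicate point, and the reason the statement splits into two cases, is the tension between the consistency balance $s=O(\sqrt{h^{\alpha+\frac{1}{2}}})$ and the inf-sup requirement $\frac{s}{h}\to 0$. For $\alpha\ge 2$ I would simply take $s=O(\sqrt{h^{\alpha+\frac{1}{2}}})$; a short computation gives $\frac{s}{h}=O(h^{(\alpha-\frac{3}{2})/2})$ with exponent at least $\frac{1}{4}>0$, so $\frac{s}{h}\to 0$ automatically as $h\to 0$. Then both Lemma \ref{lb1lb2} and the consistency balance hold simultaneously, and Theorem \ref{th2} yields the optimal $O(h^{\alpha})$. For $\alpha=1$ the nominal balance would demand $s=O(h^{3/4})$, which exceeds $h$ and is therefore incompatible both with the standing bound $s\le h$ and with $\frac{s}{h}\to 0$. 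Here I would argue differently: since always $s\le h$, the consistency term obeys $h^{-\frac{1}{2}}s^2\le h^{-\frac{1}{2}}h^2=h^{3/2}=O(h)$ with no boundary refinement needed for accuracy. Thus for $\alpha=1$ the only genuine constraint comes from the inf-sup hypothesis, which is exactly why the conclusion is stated for $h$ and $\frac{s}{h}$ sufficiently small rather than for a prescribed scaling of $s$.

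I expect the main difficulty to be bookkeeping rather than analytic: the substantive work is already carried out in Theorem \ref{th2} and in Lemma \ref{lb1lb2}, so the corollary reduces to inserting the optimal pair $r=\alpha$, $t=\alpha-1$, tracking the exponent in $h^{-\frac{1}{2}}s^2$, and checking that the chosen scaling of $s$ is consistent with $s\le h$ and with $\frac{s}{h}\to 0$. The one step that genuinely requires care is verifying that the hypotheses of the non-symmetric stability framework of \cite{Nicolaides82} invoked in Theorem \ref{th2} — which through Lemma \ref{lb1lb2} demand smallness of $\frac{s}{h}$ — are actually met by the chosen $s$ in each regime; it is precisely this verification that forces the separate treatment of $\alpha=1$ and $\alpha\ge 2$.
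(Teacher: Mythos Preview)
Your proposal is correct and follows essentially the same route as the paper: plug $r=\alpha$, $t=\alpha-1$ into Theorem~\ref{th2}, balance the consistency term $h^{-1/2}s^2$ against $h^{\alpha}$ to obtain $s=O(\sqrt{h^{\alpha+1/2}})$, and then verify that this choice is compatible with the inf-sup hypothesis of Lemma~\ref{lb1lb2}. Your treatment of the two cases---computing $\frac{s}{h}=O(h^{(\alpha-3/2)/2})$ with exponent $\ge\frac{1}{4}$ for $\alpha\ge 2$, and observing for $\alpha=1$ that $s\le h$ already forces $h^{-1/2}s^2\le h^{3/2}=O(h)$ so only the stability constraint $\frac{s}{h}\to 0$ remains---matches the paper's reasoning exactly.
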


\begin{remark} \label{rem:sh2}
  As mentioned previously in Remark \ref{rem:sh}, we suspect that the condition of $\frac{s}{h}$ being sufficiently small is not necessary,
  but are not yet able to prove it. Indeed, numerical results in Section \ref{section5} will show that the estimate in Corollary \ref{lemma4.10} is
  sharp in the case $\alpha=1$ with just $s=O(h)$.
\end{remark}

}




\section{Numerical examples}{\label{section5}}

In this section, we present a sequence of numerical examples to validate the accuracy of the original weak 
Galerkin method (\ref{dispro}) and the modified weak Galerkin method (\ref{dispro2}).
All experiments are done with polynomial degrees $\alpha=\beta=j$ and  $\sigma=j-1$, for $j\geq 1$,
which, according to theorems \ref{th1} and \ref{th2}, are the best choice to minimize the approximation error.
Throughout this section, we shall address such scheme using $P_j$-$P_j$-$P_{j-1}$, with various $j$s.

\medskip

\noindent{\bf{ Example 5.1. Rectangular domain}}

We first quickly present numerical results on the square domain $\Omega= (0,1)\times(0,1)$, which shall serve as a comparison group.
Uniform triangular meshes are used.
The exact solution is, 
$$
\vu(x,y)=\left(
\begin{aligned}
\pi\sin(\pi x)\cos(\pi y)\\
\pi\cos(\pi x)\sin(\pi y)
\end{aligned}\right),\,
  \qquad
p(x,y) = \cos(\pi x)\cos(\pi y),
$$
which satisfies a homogeneous Neumann boundary condition $\vu\cdot \vn=0\text{ on }\partial\Omega$
and $\int_{\Omega}p\dx = 0$.
In Figure \ref{p1p2ret},
it can be seen that the $P_j$-$P_j$-$P_{j-1}$ original WG schemes exhibits an optimal $(j+1)$th order convergence, as predicted.
\begin{figure}[H]
\begin{center}
\includegraphics[width=2in]{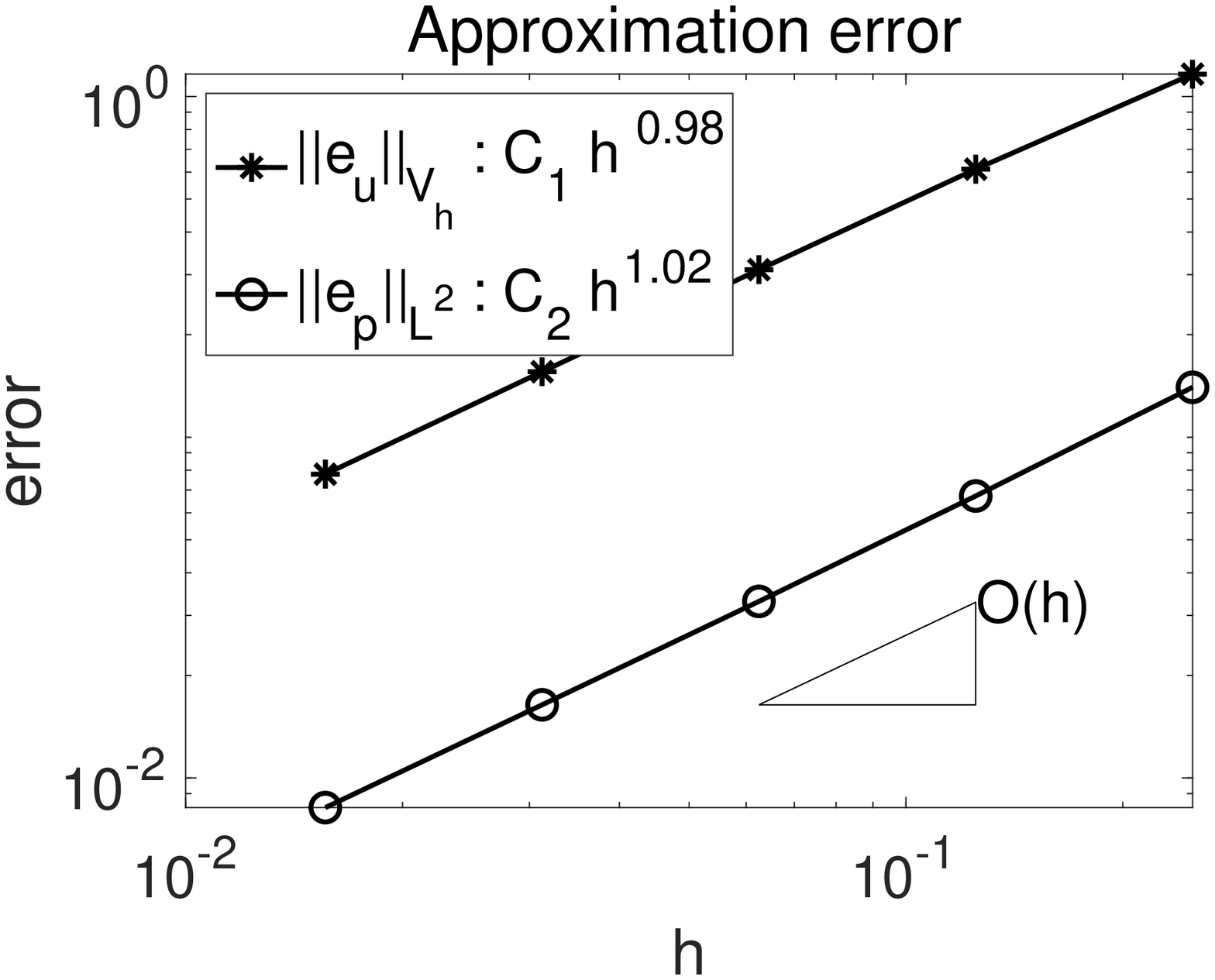} \includegraphics[width=2in]{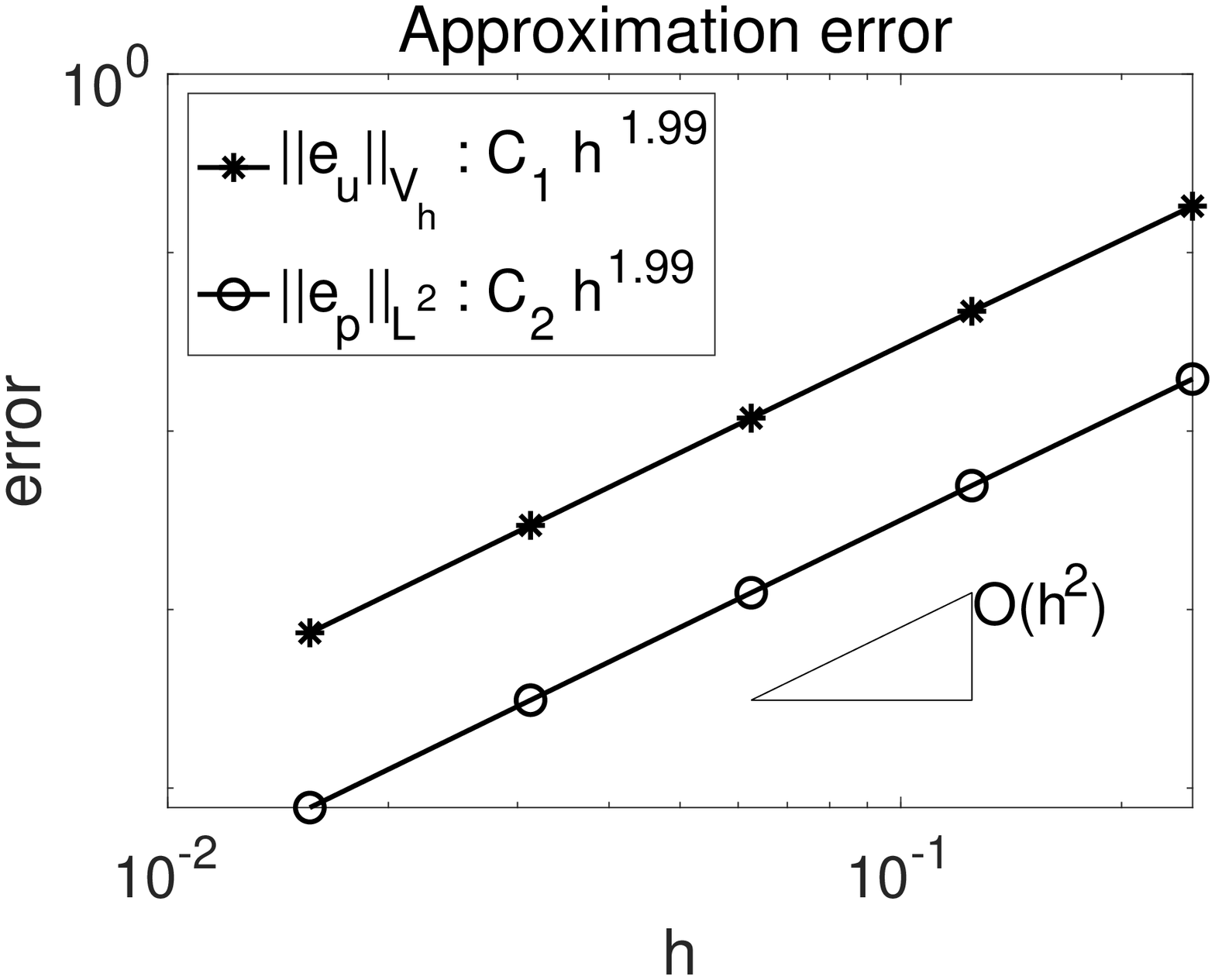}\\
\includegraphics[width=2in]{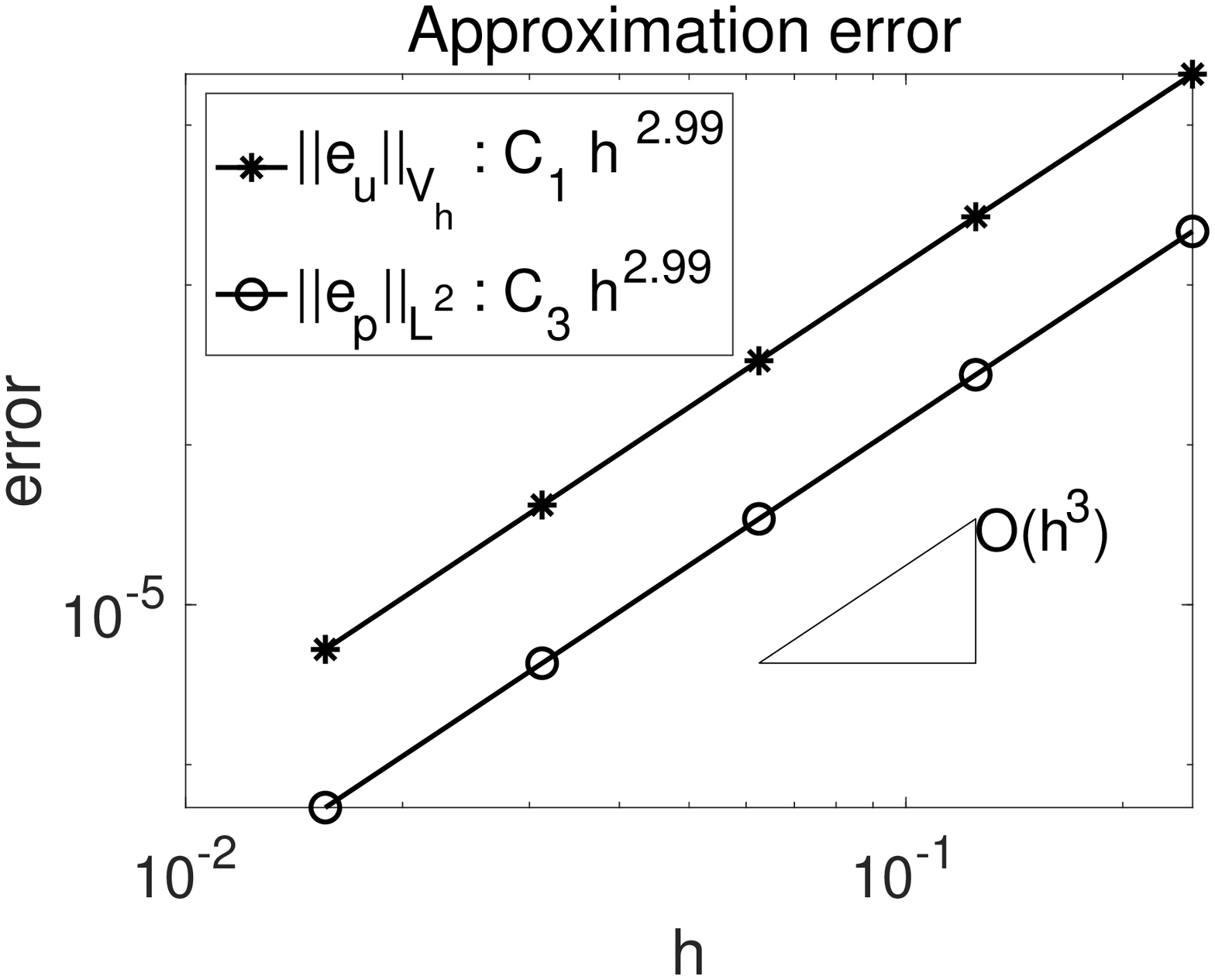}\includegraphics[width=2in]{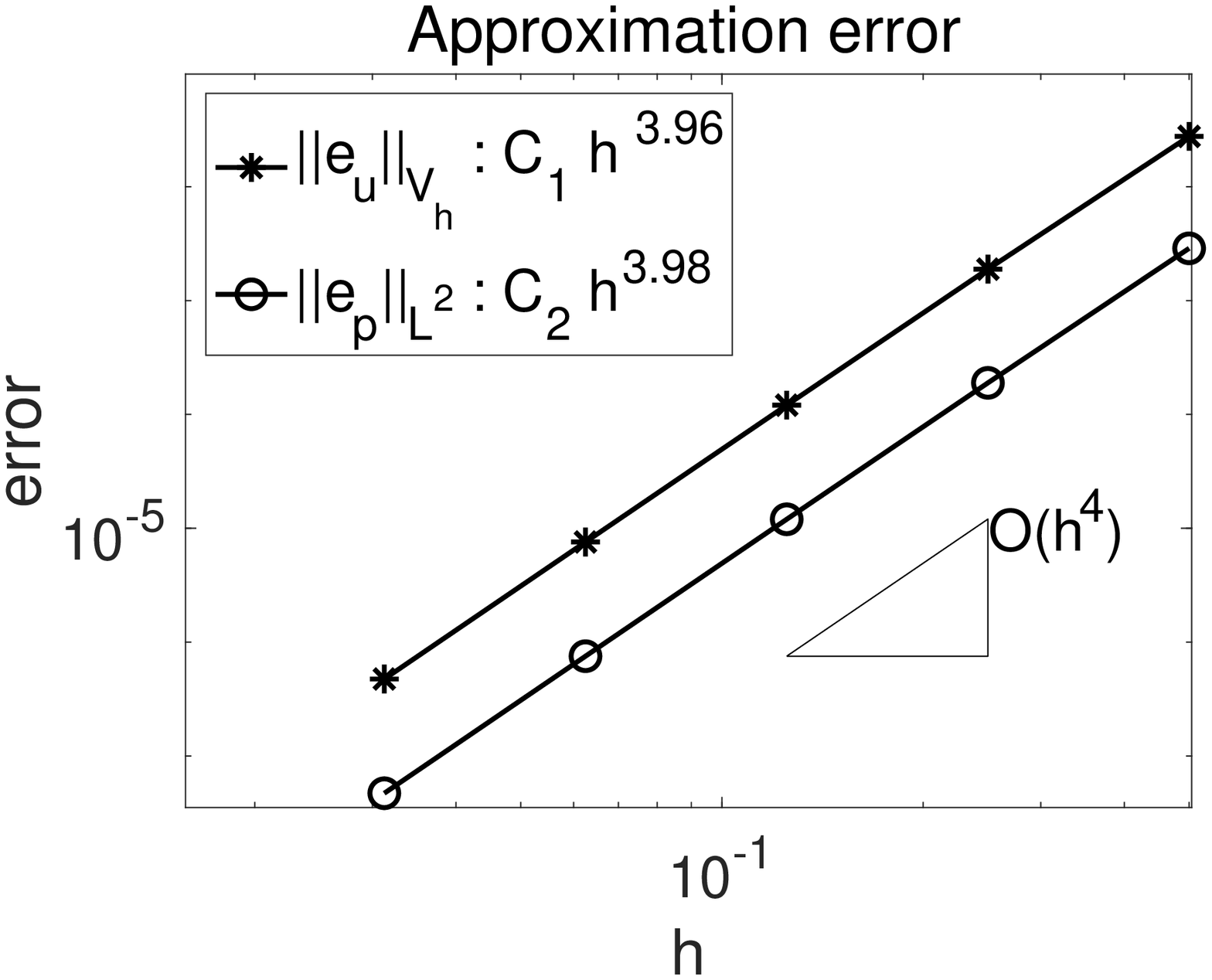}
\caption{Convergence rates (from left to right; top to bottom)
  for the $P_1$-$P_1$-$P_0$, $P_2$-$P_2$-$P_1$, $P_3$-$P_3$-$P_2$ and $P_4$-$P_4$-$P_3$ original WG scheme (\ref{dispro}) on a rectangular domain.}
\label{p1p2ret}
\end{center}
\end{figure}

\noindent{\bf{ Example 5.2. Unit disk}}

The domain is a unit disk $\Omega = \{(x,y)\,|\,x^2+y^2< 1\}$,
and the exact solution is 
$$
\vu(x,y)=\left(
\begin{aligned}
3x^2&+y^2-3\\
 &2xy
\end{aligned}\right),\,
\qquad
p(x,y) = 3x-x(x^2+y^2),
$$
which satisfies a homogeneous Neumann boundary condition and $\int_{\Omega}p\dx = 0$.

Three meshes, as shown in Figure \ref{circlemesh}, are considered.
The leftmost is a plain triangular mesh with $s=O(h)$.
According to \wangI{corollaries \ref{lemma3.15}, \ref{lemma4.10} and Remark \ref{rem:sh2}}, on this mesh we expect
to have $O(h^{\frac{1}{2}})$ convergence for all original WG schemes, optimal $O(h)$ convergence for the lowest-order modified WG scheme,
and $O(h^{\frac{3}{2}})$ convergence for all high-order modified WG schemes.
The mesh in the middle is a polygonal mesh with each curved side further divided into $\lceil h^{{\frac{1}{2}-j}}\rceil$ short straight edges,
where $\lceil\cdot\rceil$ means the ceiling.
In other words, we have $\frac{h}{s}=O(h^{{\frac{1}{2}-j}})$ which is equivalent to $s=O(h^{j+\frac{1}{2}})$.
In Figure \ref{circlemesh} the illustration is given for $j=2$.
According to Corollary \ref{lemma3.16}, this is the requirement for the original WG to exhibit optimal convergence.
The rightmost is a polygonal mesh with each curved side divided into $\lceil h^{\frac{3-2j}{4}}\rceil$ short straight edges,
or in other words, $s=O(\sqrt{h^{j+\frac{1}{2}}})$.
Again the illustration is given for $j=2$.
One can immediately see that it uses less short edges than the mesh in the middle.
According to Corollary \ref{lemma4.10}, this guarantees that the modified WG scheme has optimal convergence.

\begin{figure}[H]
\begin{center}
\includegraphics[width=2in]{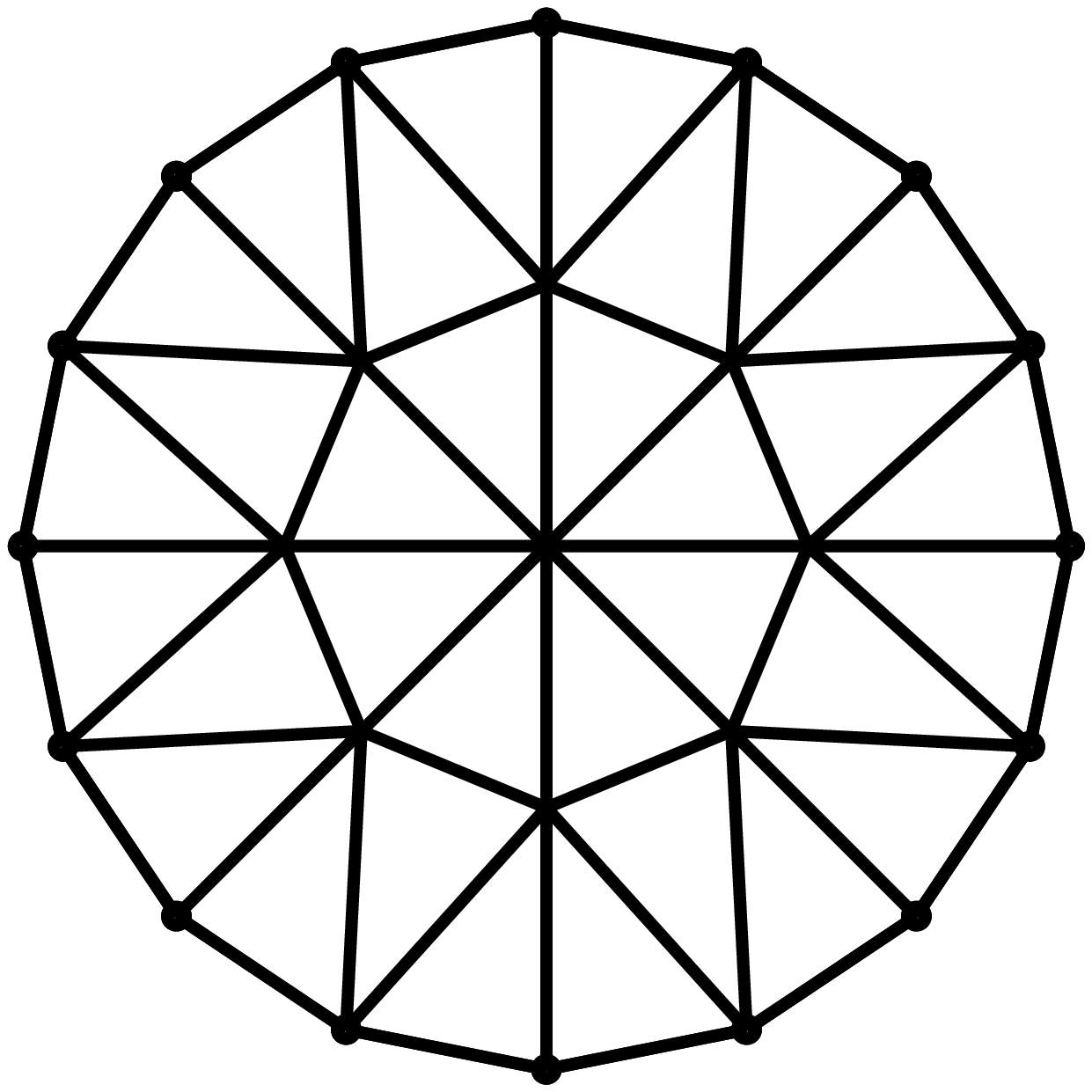} \includegraphics[width=2in]{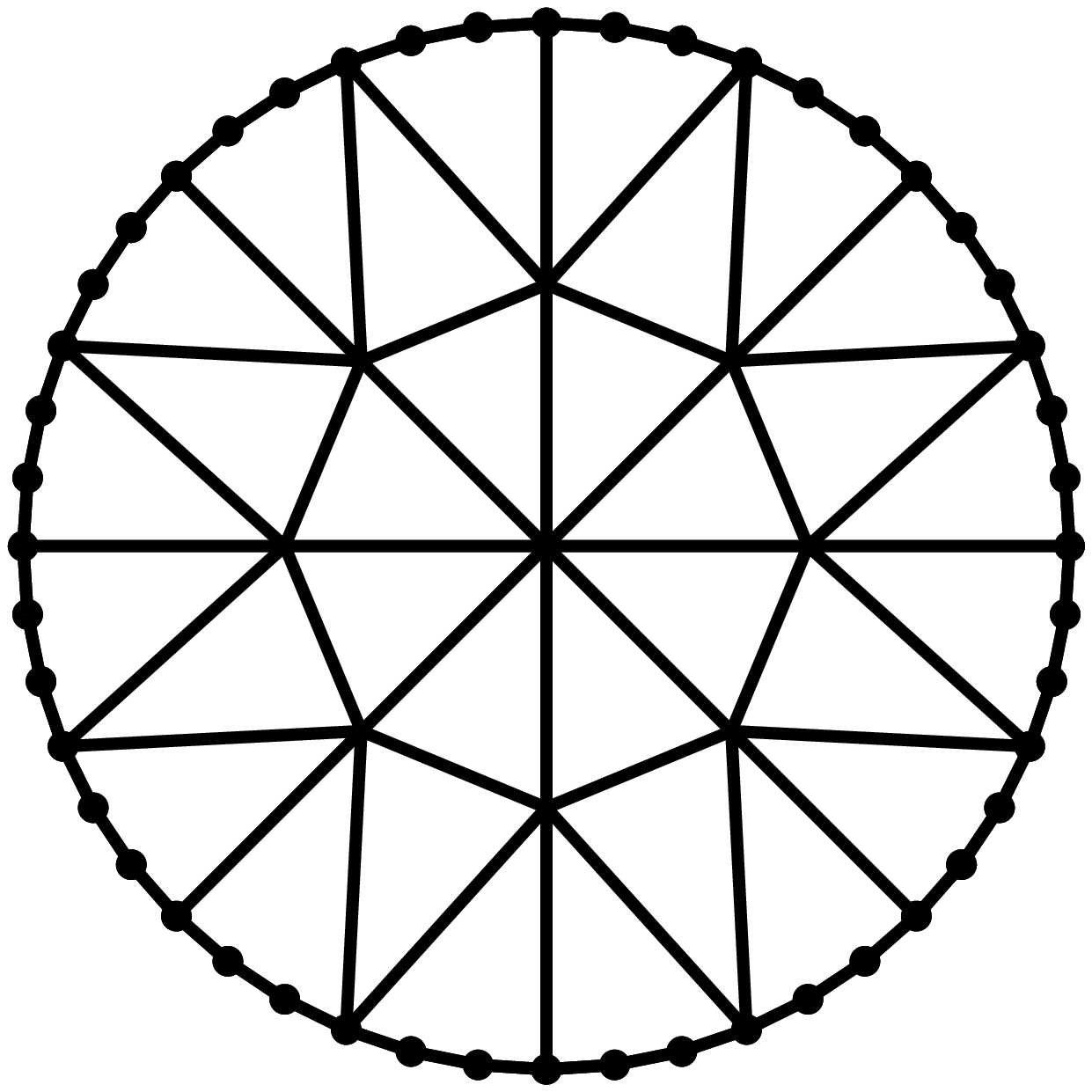} \includegraphics[width=2in]{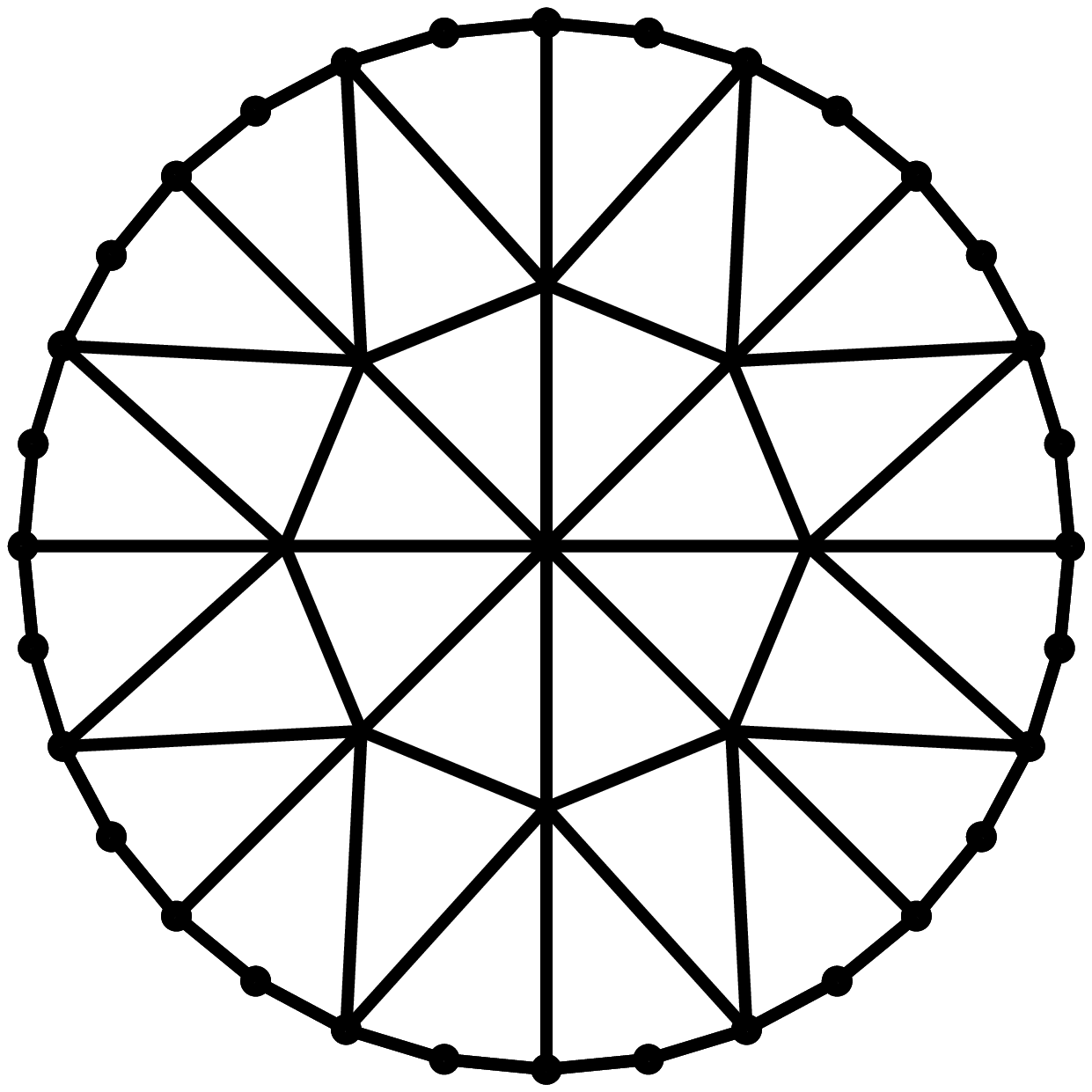} 
\caption{Triangular and polygonal meshes on the unit disk.}
\label{circlemesh}
\end{center}
\end{figure}
 
In figures \ref{circlewglong}-\ref{p1p2p3p4}, convergence rates of the original and modified WG schemes 
on the left mesh of Figure \ref{circlemesh} are reported.
The results agree with the conclusion in corollaries \ref{lemma3.15} and \ref{lemma4.10}.
Higher-order schemes with $j>2$ (we tested $j$ up to $4$) behave the same as the case of $j=2$, and hence are omitted to save space.
We notice an interesting fact that $\|e_{ p}\|_{0,\Omega_h}$ appears to have a better convergence rate than $\|\ve_{ {\vu}}\|_{\bm{V}_h}$,
which awaits further study in the future.

\begin{figure}[H]
\begin{center}
\includegraphics[width=2in]{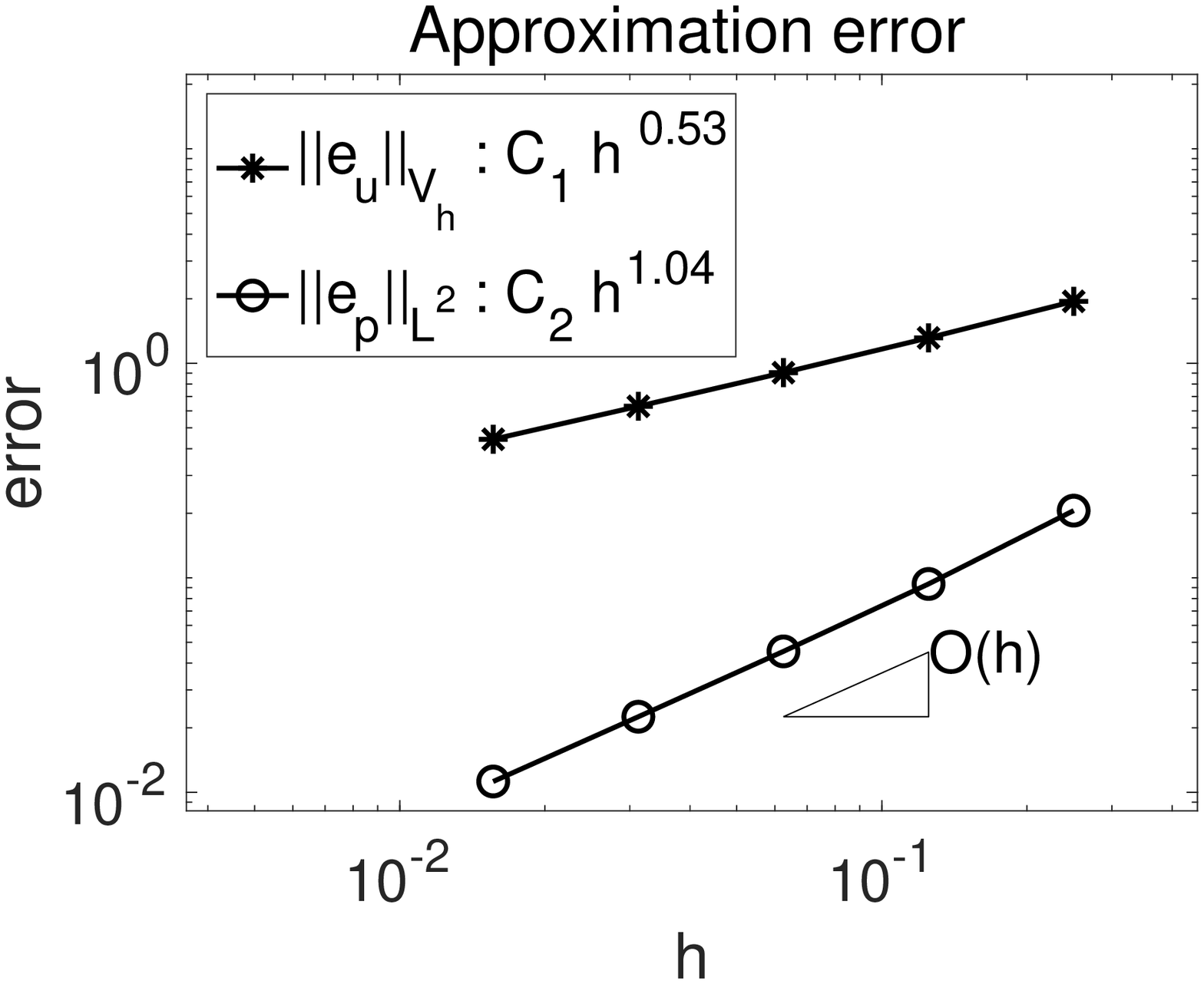} \includegraphics[width=2in]{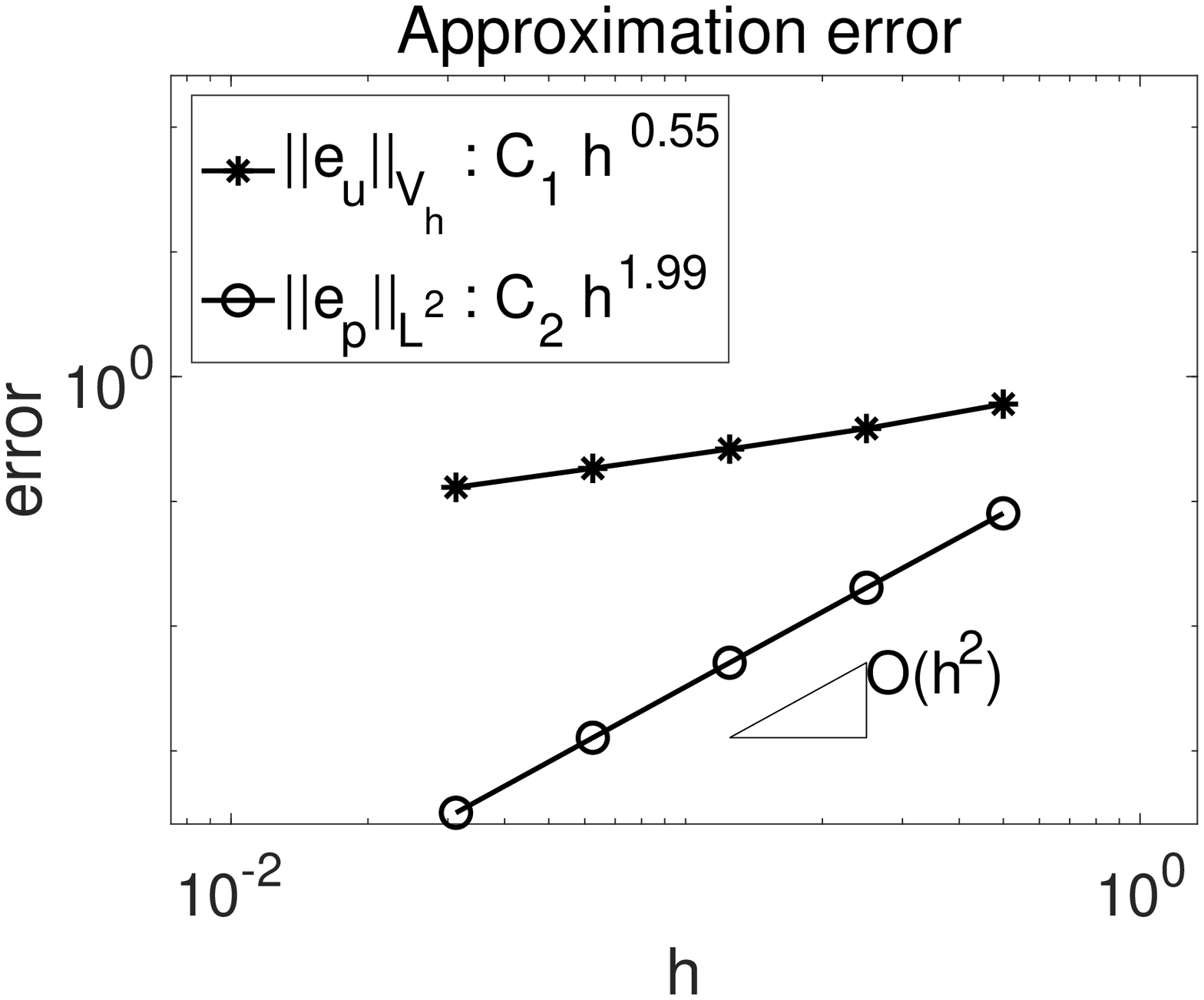} 
\caption{Convergence rates for the $P_1$-$P_1$-$P_0$ and $P_2$-$P_2$-$P_1$ original WG scheme (\ref{dispro})
  on the left mesh in Figure \ref{circlemesh}.}
\label{circlewglong}
\end{center}
\end{figure}
\begin{figure}[H]
\begin{center}
\includegraphics[width=2in]{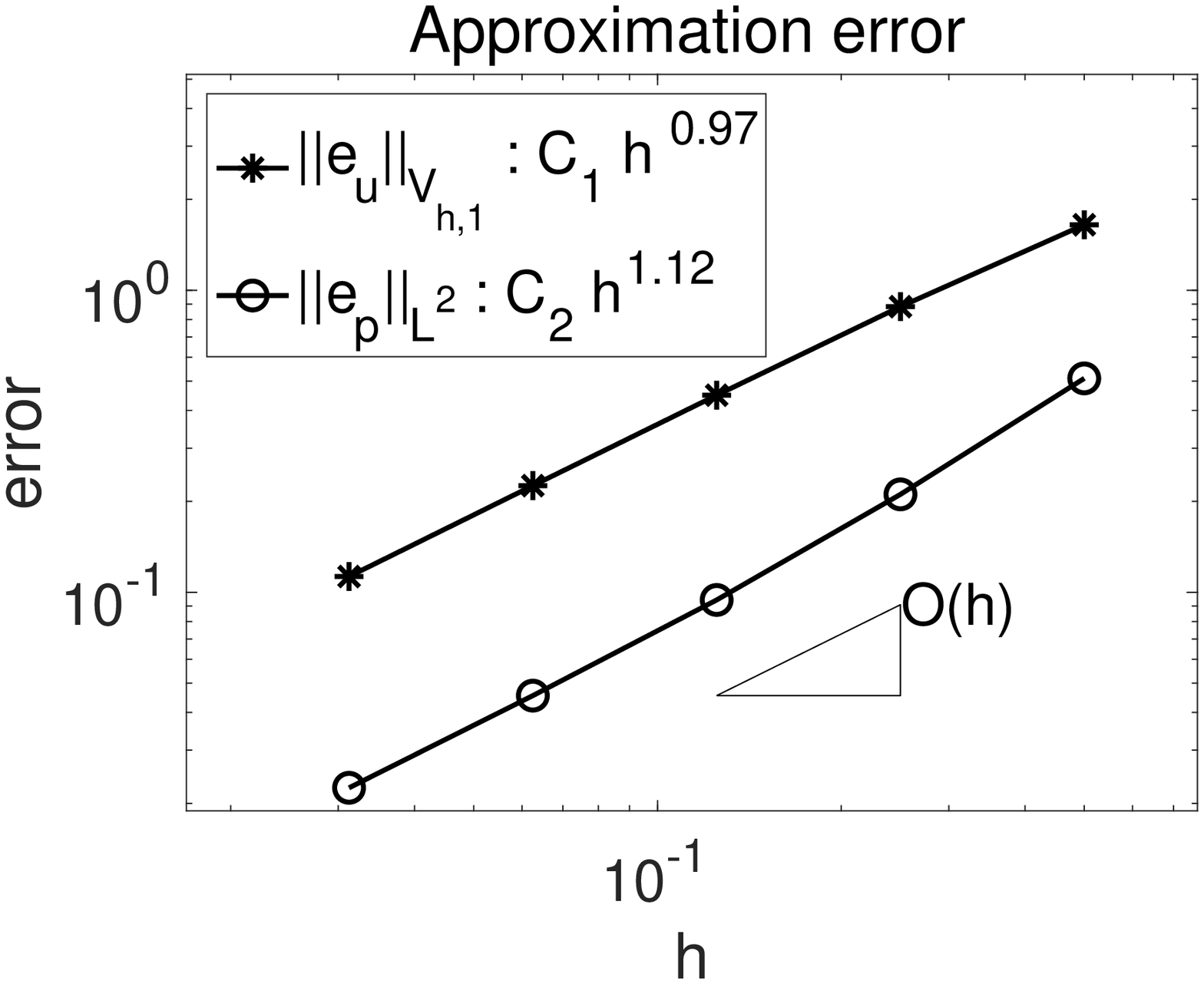}\includegraphics[width=2in]{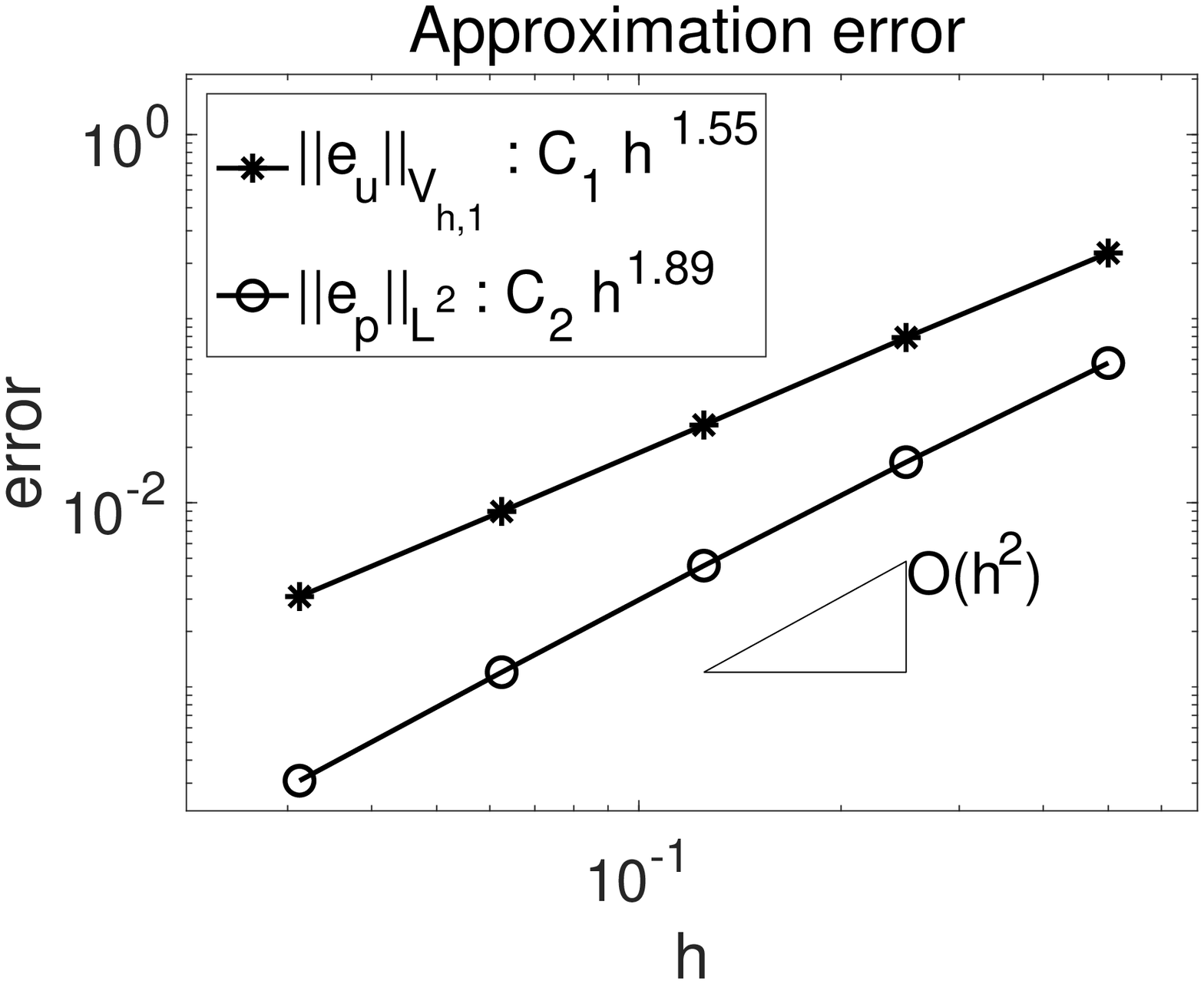}
\caption{Convergence rates for the $P_1$-$P_1$-$P_0$ and $P_2$-$P_2$-$P_1$ modified WG scheme (\ref{dispro2})
  on the left mesh in Figure \ref{circlemesh}. }
  
\label{p1p2p3p4}
\end{center}
\end{figure}

We then test the original and the modified WG scheme, respectively, on the middle and the right meshes in Figure \ref{circlemesh}.
The results are reported in figures \ref{p1p1p0}-\ref{p2p3p4m}.
Both are optimal, as predicted in corollaries \ref{lemma3.16} and \ref{lemma4.10}.

\begin{figure}[H]
\begin{center}
\includegraphics[width=2in]{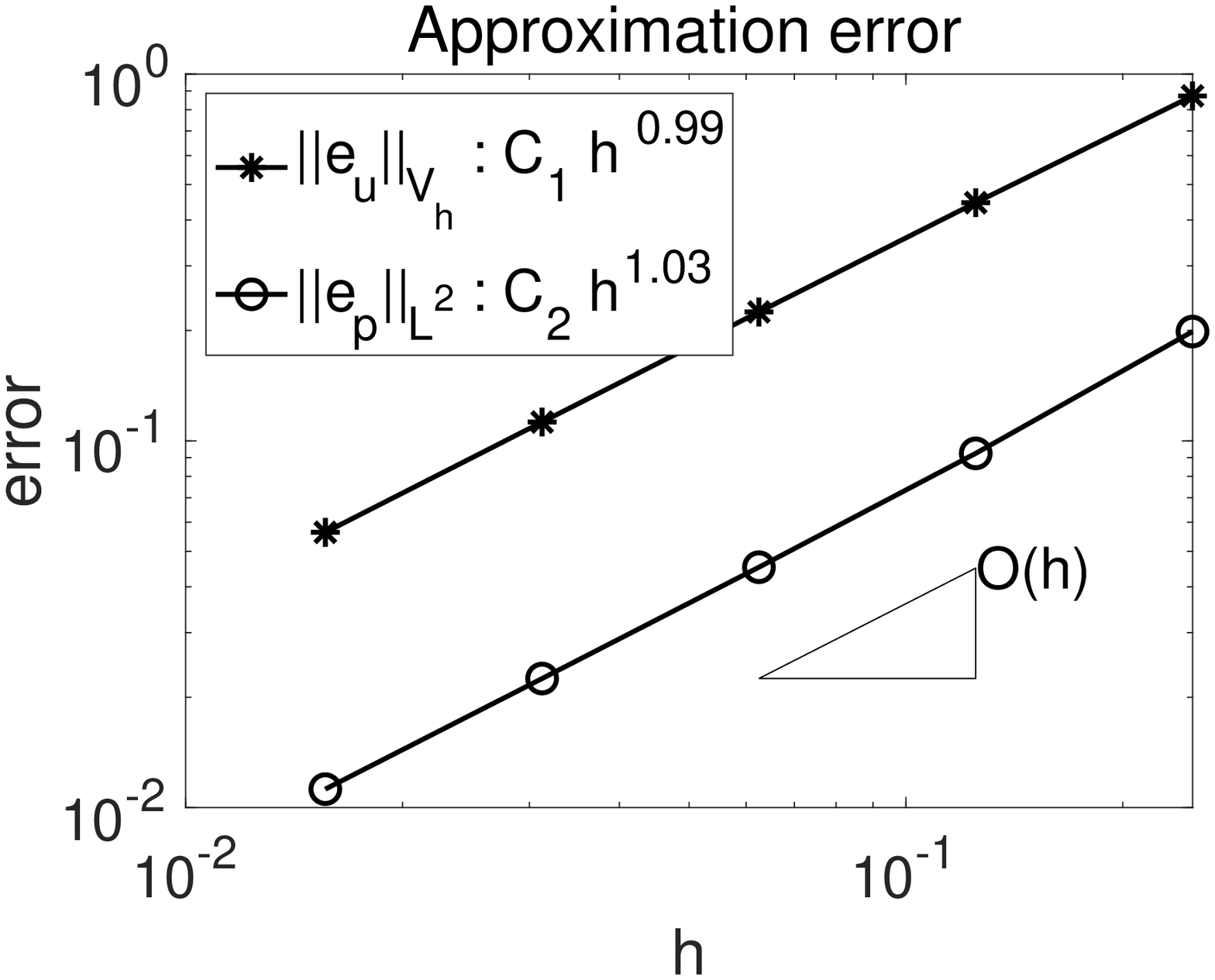} \includegraphics[width=2in]{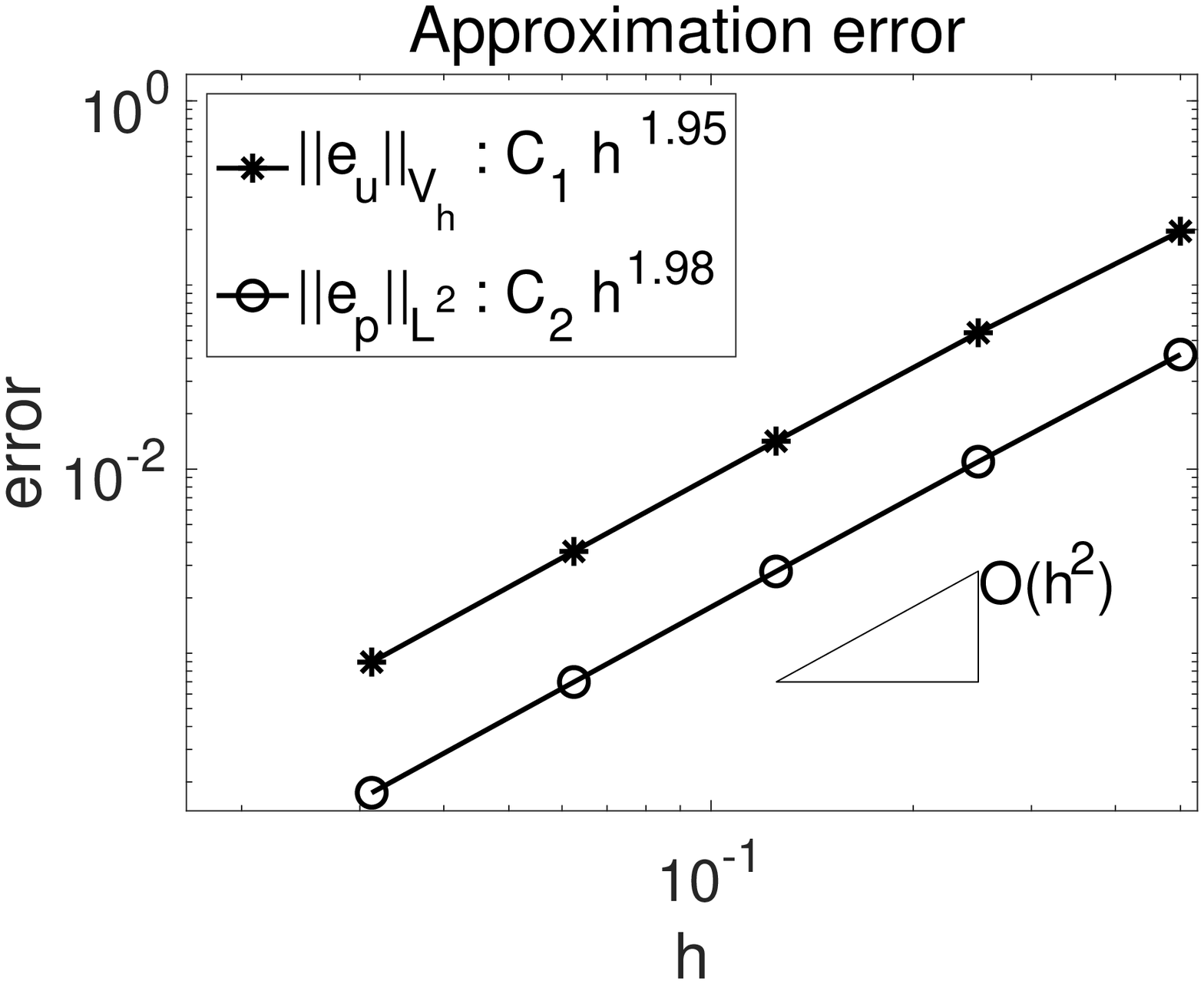} 
\caption{Convergence rates for the $P_1$-$P_1$-$P_0$ and $P_2$-$P_2$-$P_1$ original WG scheme  (\ref{dispro})
  on the middle mesh in Figure \ref{circlemesh}.}
\label{p1p1p0}
\end{center}
\end{figure}

\begin{figure}[H]
\begin{center}
\includegraphics[width=2in]{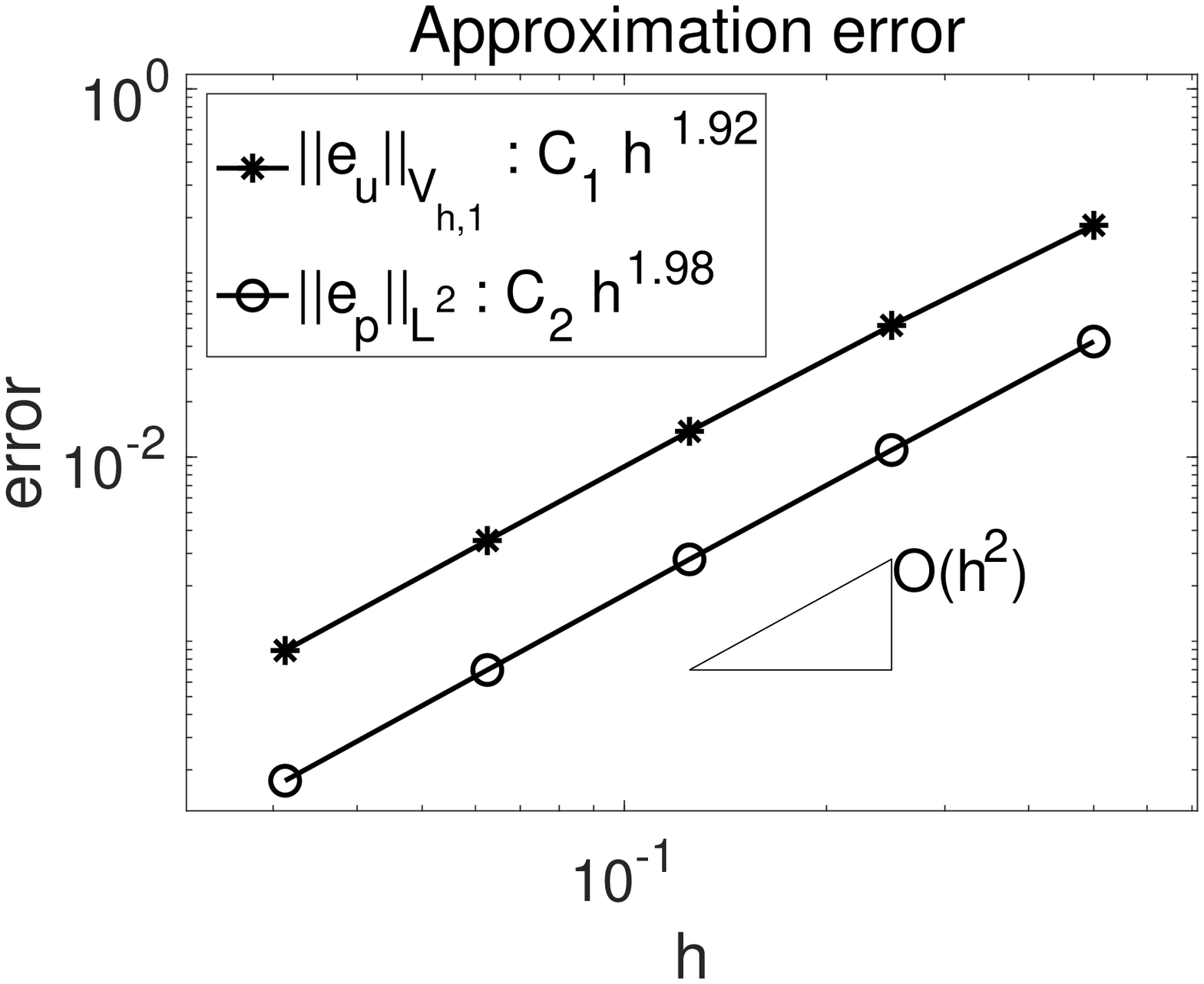}\includegraphics[width=2in]{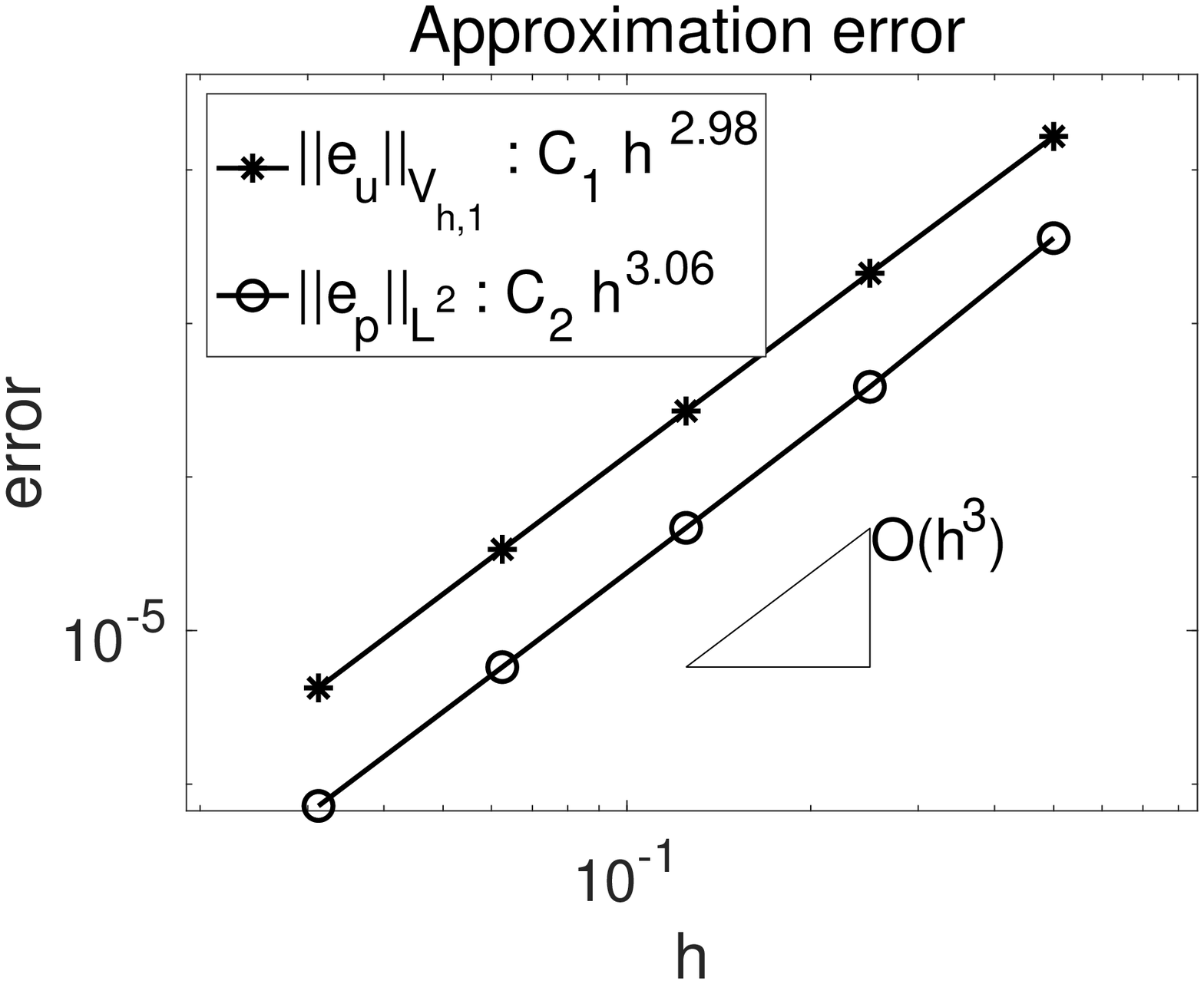} \includegraphics[width=2in]{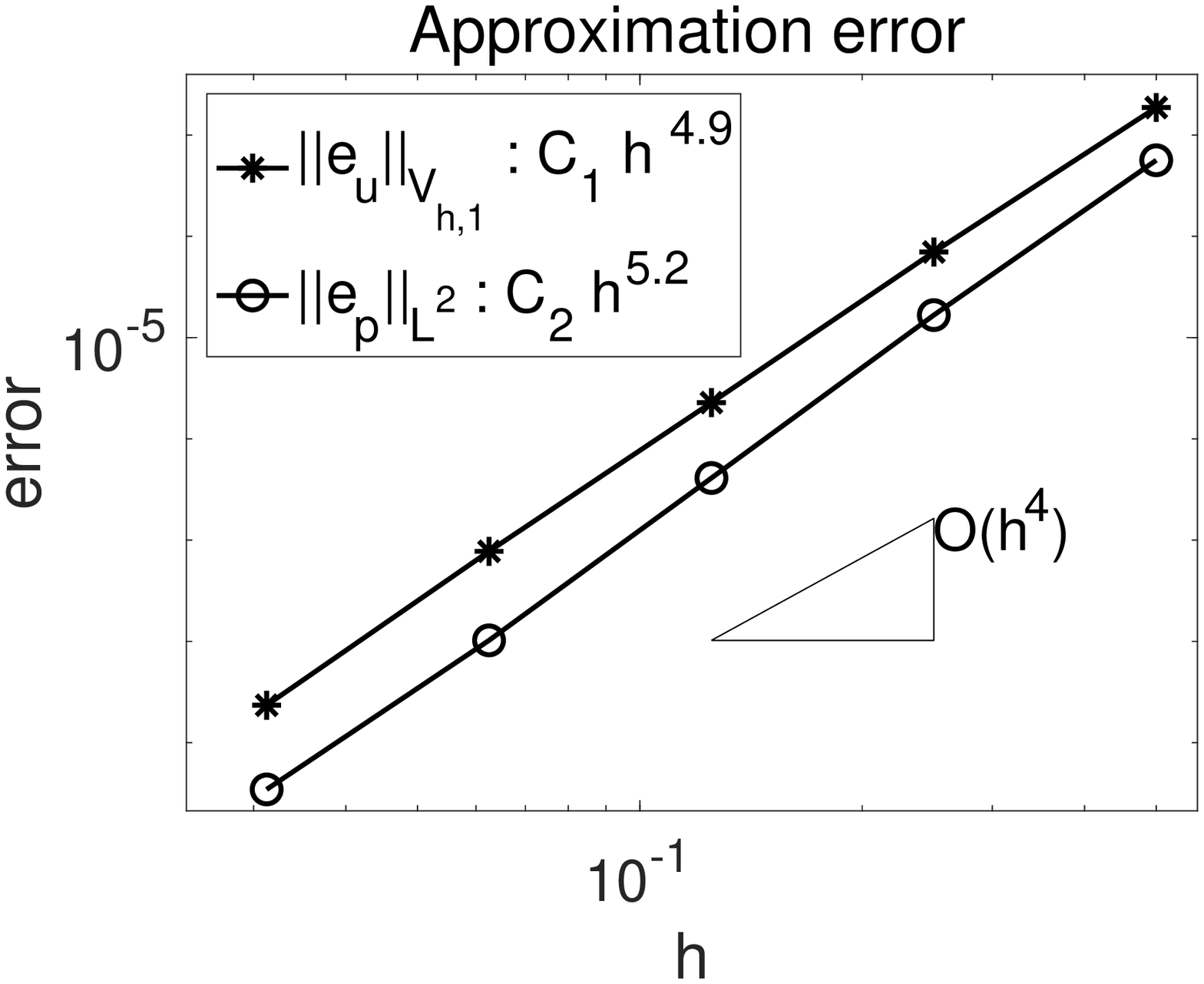} 
\caption{Convergence rates for the $P_2$-$P_2$-$P_1$, $P_3$-$P_3$-$P_2$ and $P_4$-$P_4$-$P_3$ modified WG scheme (\ref{dispro2})
  on the right mesh in Figure \ref{circlemesh}.}
\label{p2p3p4m}
\end{center}
\end{figure}

\noindent{\bf{ Example 5.3. Ring}}

Finally, we consider a ring domain $\Omega = \{(r,\theta)\,|\,r\in(\frac{1}{2},1)\text{, }\theta\in[0,2\pi]\}$.
This domain is non-convex and thus is not covered by the theoretical analysis in the paper.
However, numerical results to be presented next show that the convergence rates are the same as results in convex domains.
We set the exact solution to be
$$
\vu(r,\theta)=\left(
\begin{aligned}
&-\frac{r\sin 2\theta \,(8r - 9)}{2}\\
9r + 9r&(\sin\theta)^2 - 8r^2(\sin\theta)^2 - 4r^2 - 6
\end{aligned}\right),
\qquad
p(r,\theta) = (4r^3+6r-9r^2)  \sin \theta,
$$
which satisfies a homogeneous Neumann boundary condition and $\int_{\Omega} p\dx = 0$.

Again, three meshes are considered, as shown in Figure \ref{ringmesh}.
The leftmost is a triangular mesh with $s=O(h)$.
The middle and the right are polygonal meshes with $s=O(h^{j+\frac{1}{2}})$ and $s=O(\sqrt{h^{j+\frac{1}{2}}})$, respectively.
Numerical results of the original and the modified WG schemes on these meshes are reported in figures \ref{ringwglong}-\ref{p2p3p4ringm},
which exhibit the same convergence rates as in convex domains.

\begin{figure}[H]
\begin{center}
\includegraphics[width=2in]{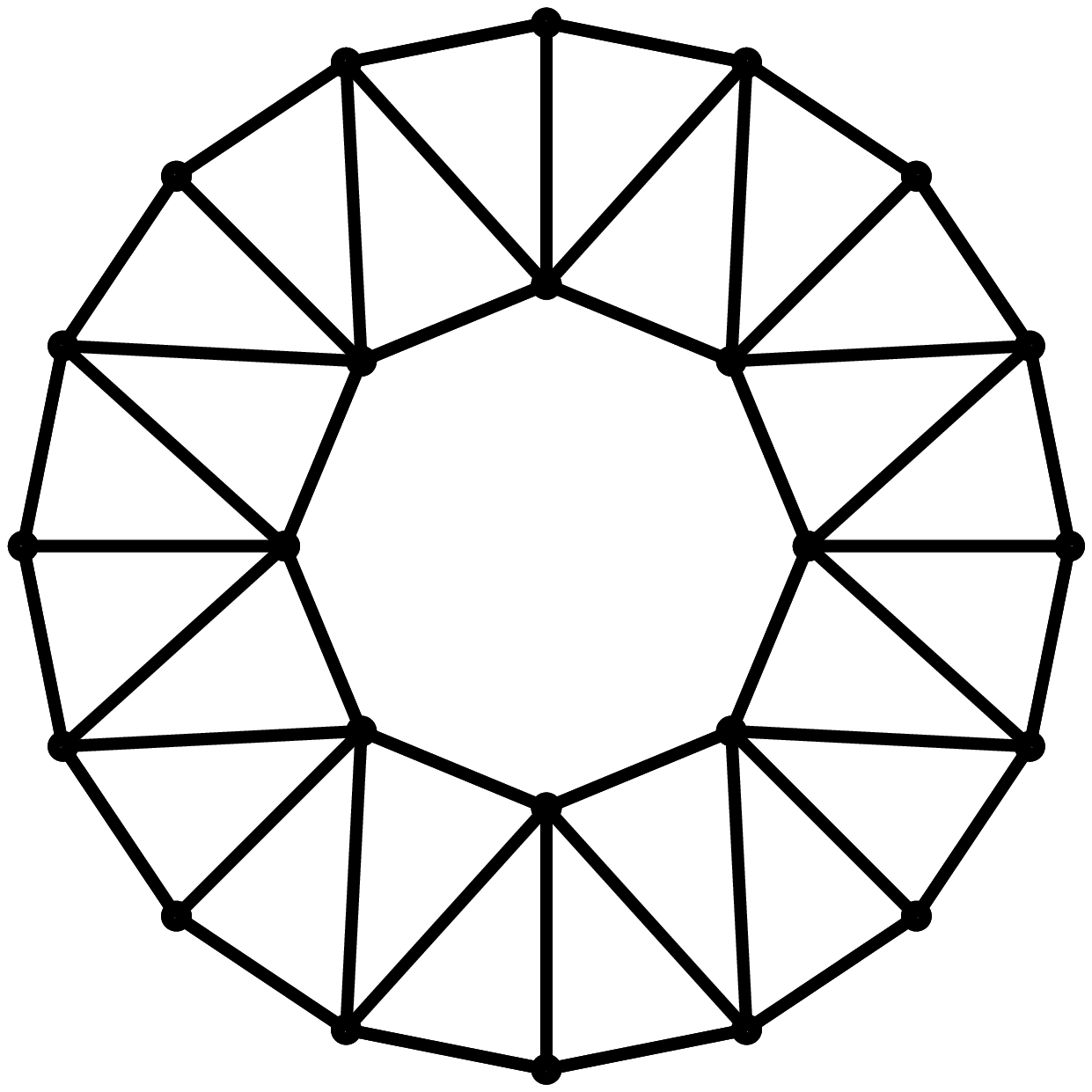} \includegraphics[width=2in]{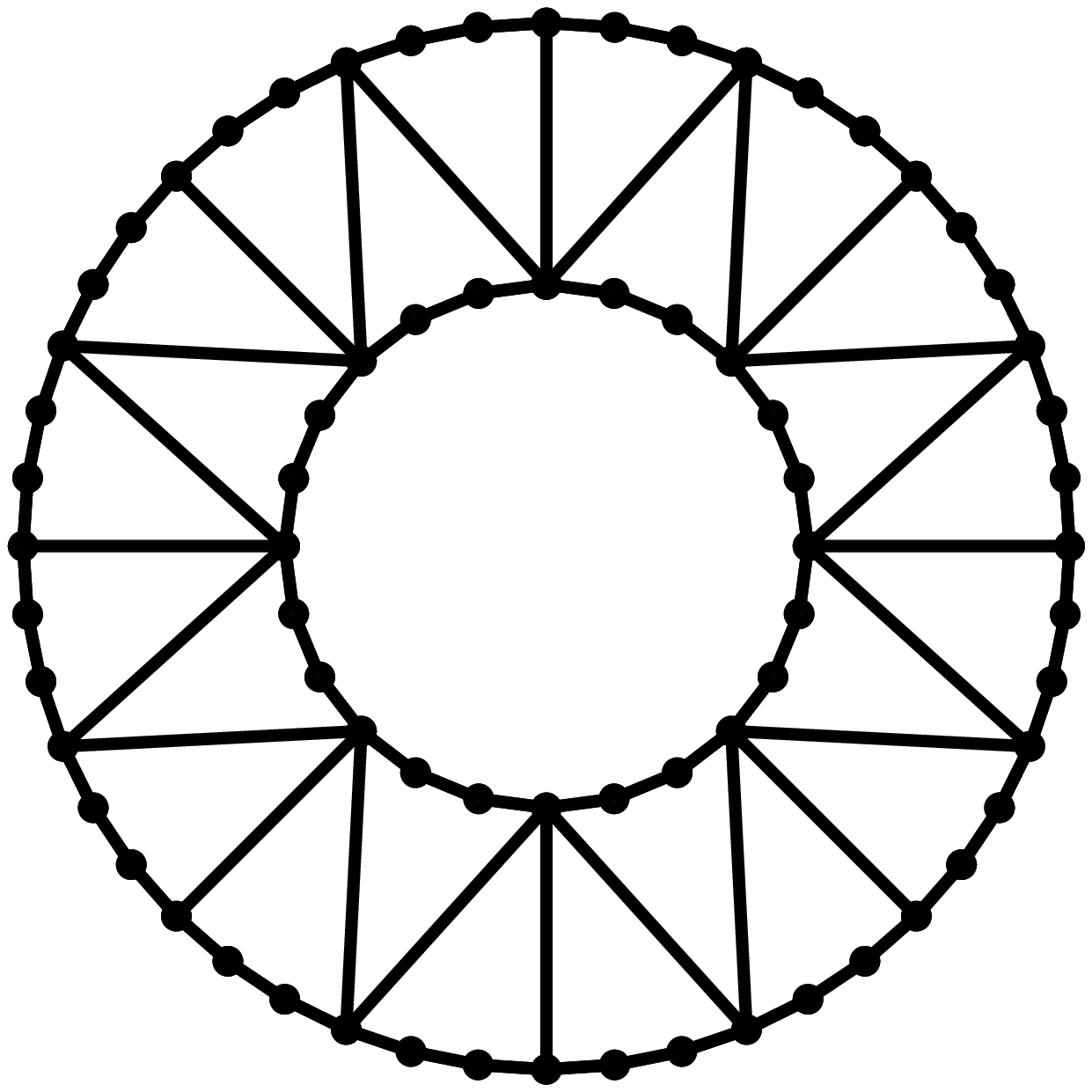} \includegraphics[width=2in]{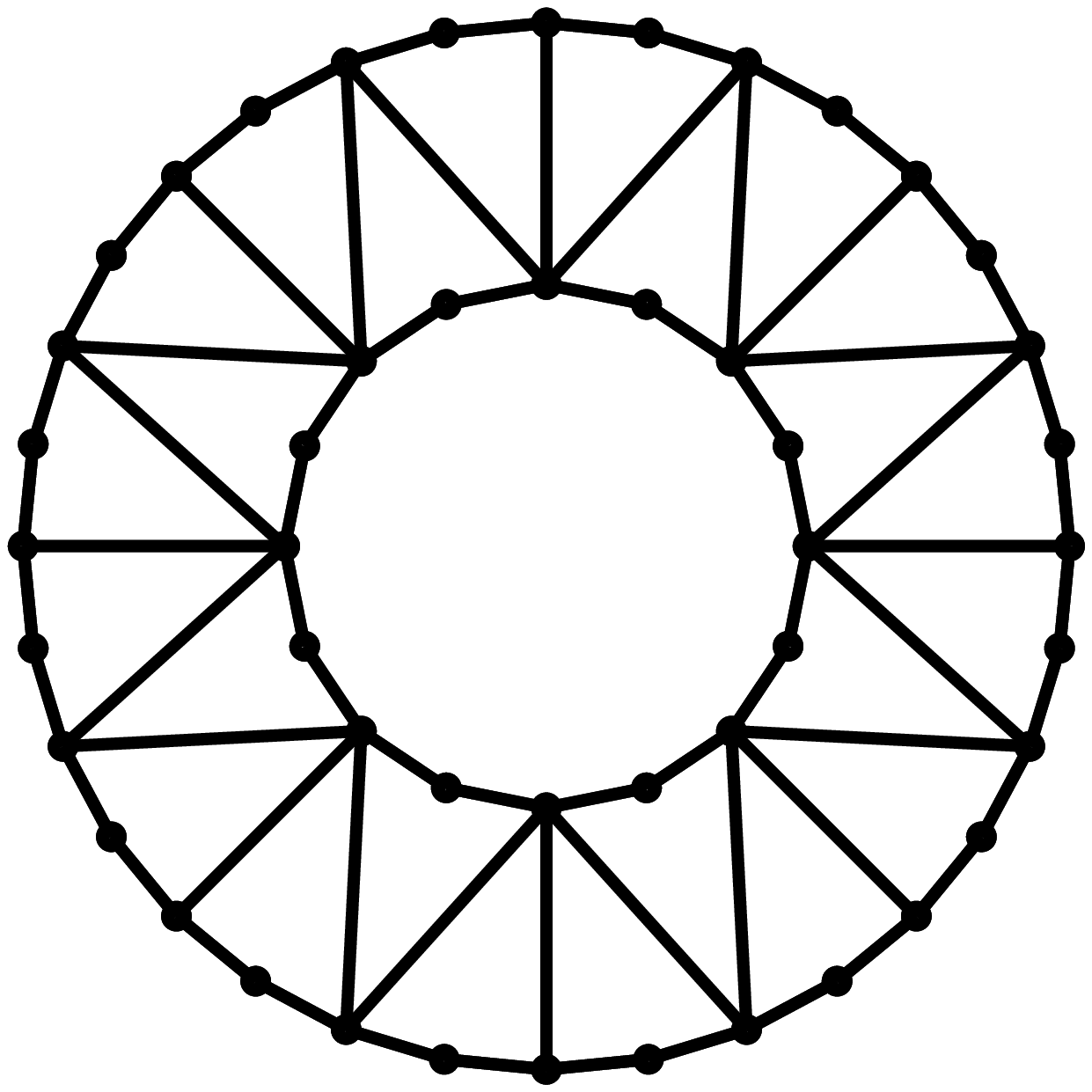} 
\caption{Triangular and polygonal meshes on the ring.}
\label{ringmesh}
\end{center}
\end{figure}

%

\begin{figure}[H]
\begin{center}
\includegraphics[width=2in]{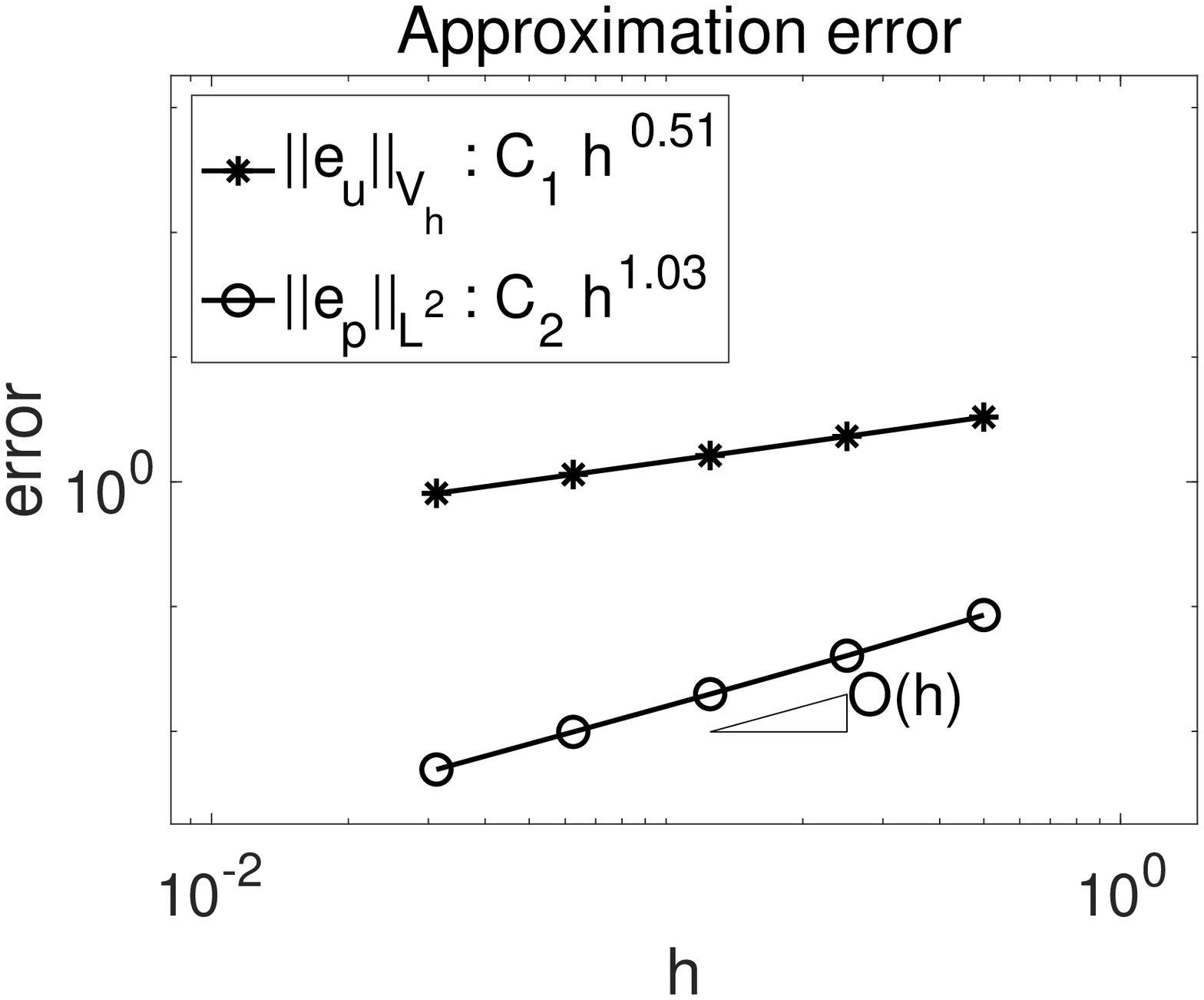} \includegraphics[width=2in]{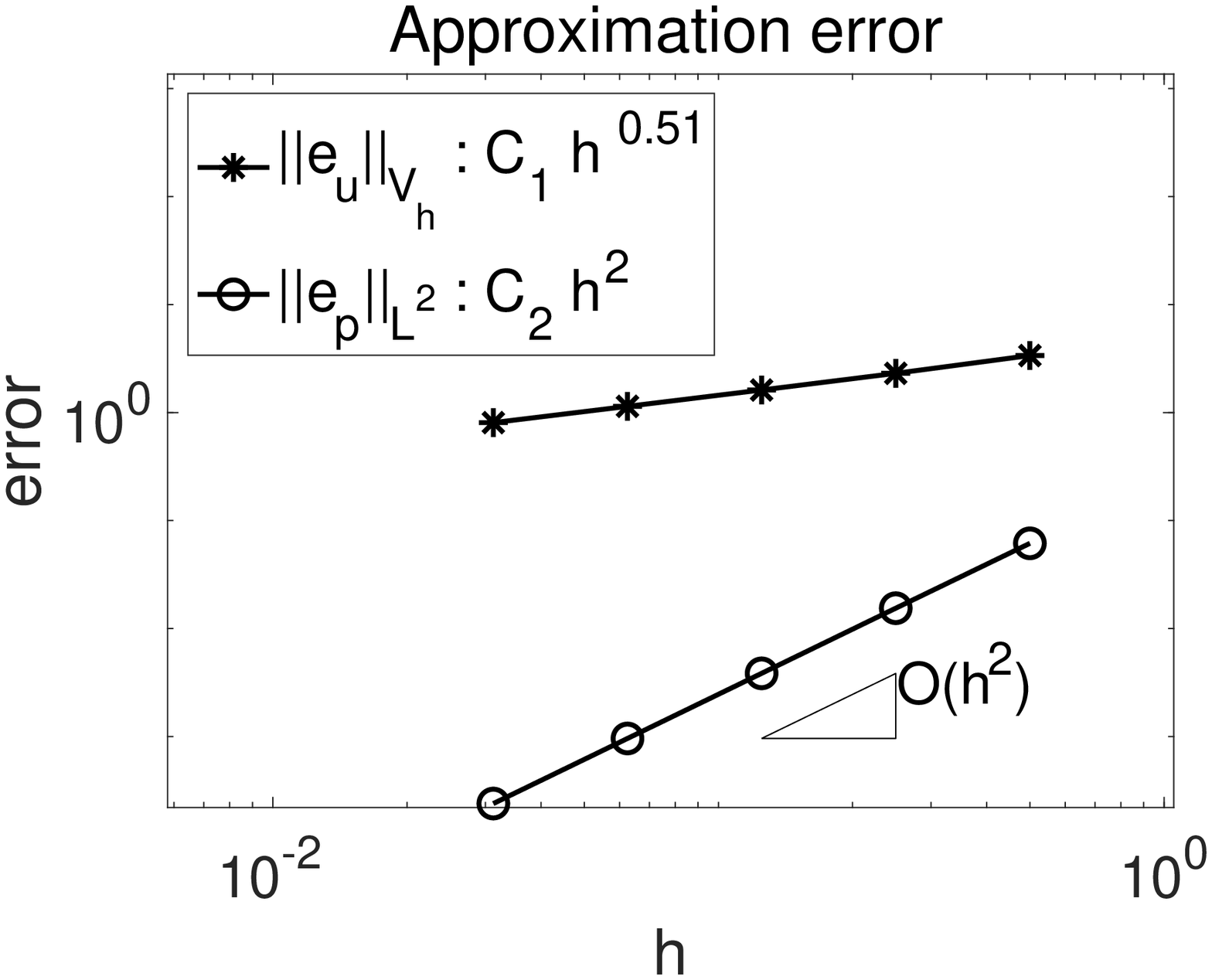} 
\caption{Convergence rates for the $P_1$-$P_1$-$P_0$ and $P_2$-$P_2$-$P_1$ original WG scheme (\ref{dispro})
  on the left mesh in Figure \ref{ringmesh}.}
\label{ringwglong}
\end{center}
\end{figure}
\begin{figure}[H]
\begin{center}
\includegraphics[width=2in]{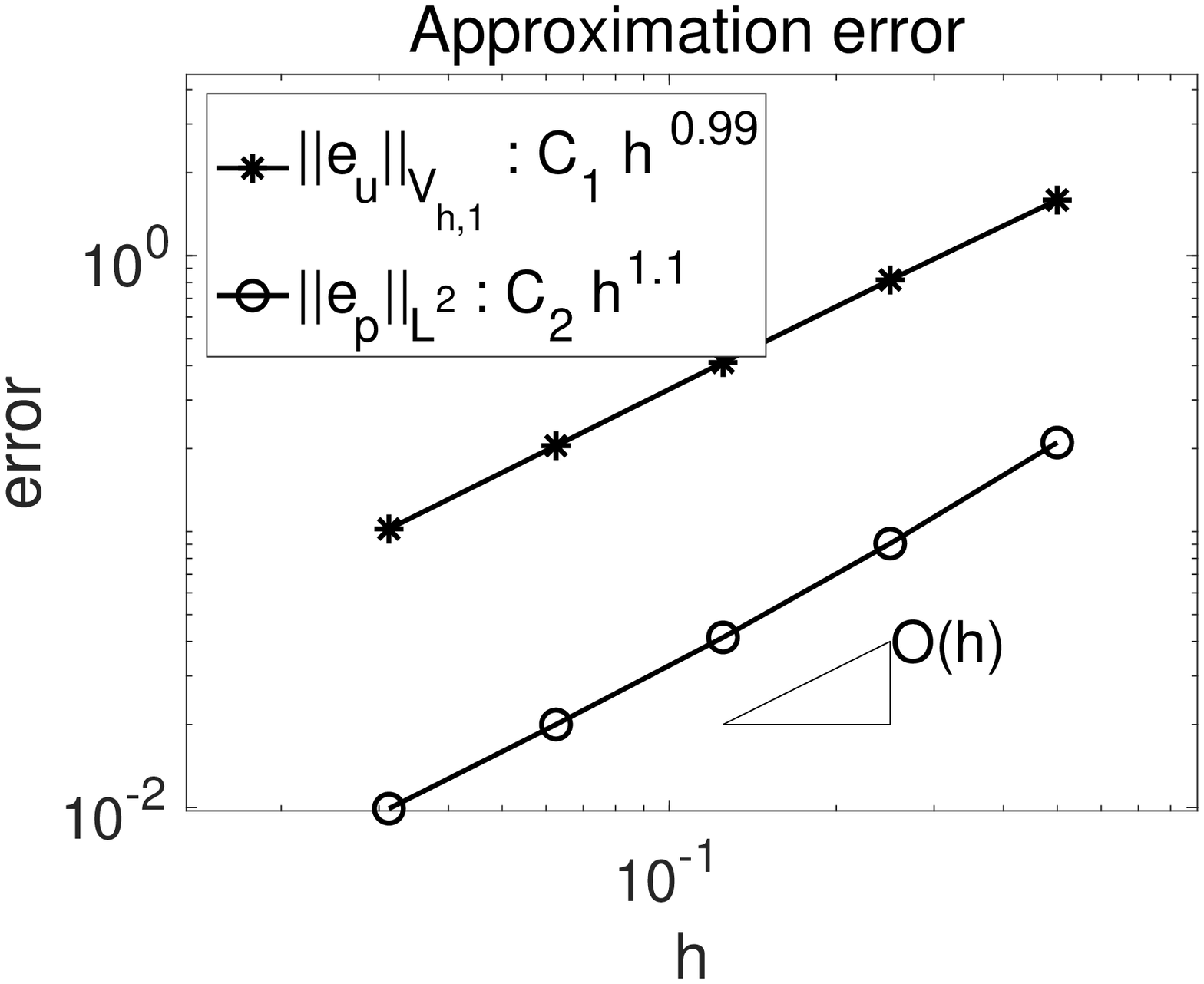}\includegraphics[width=2in]{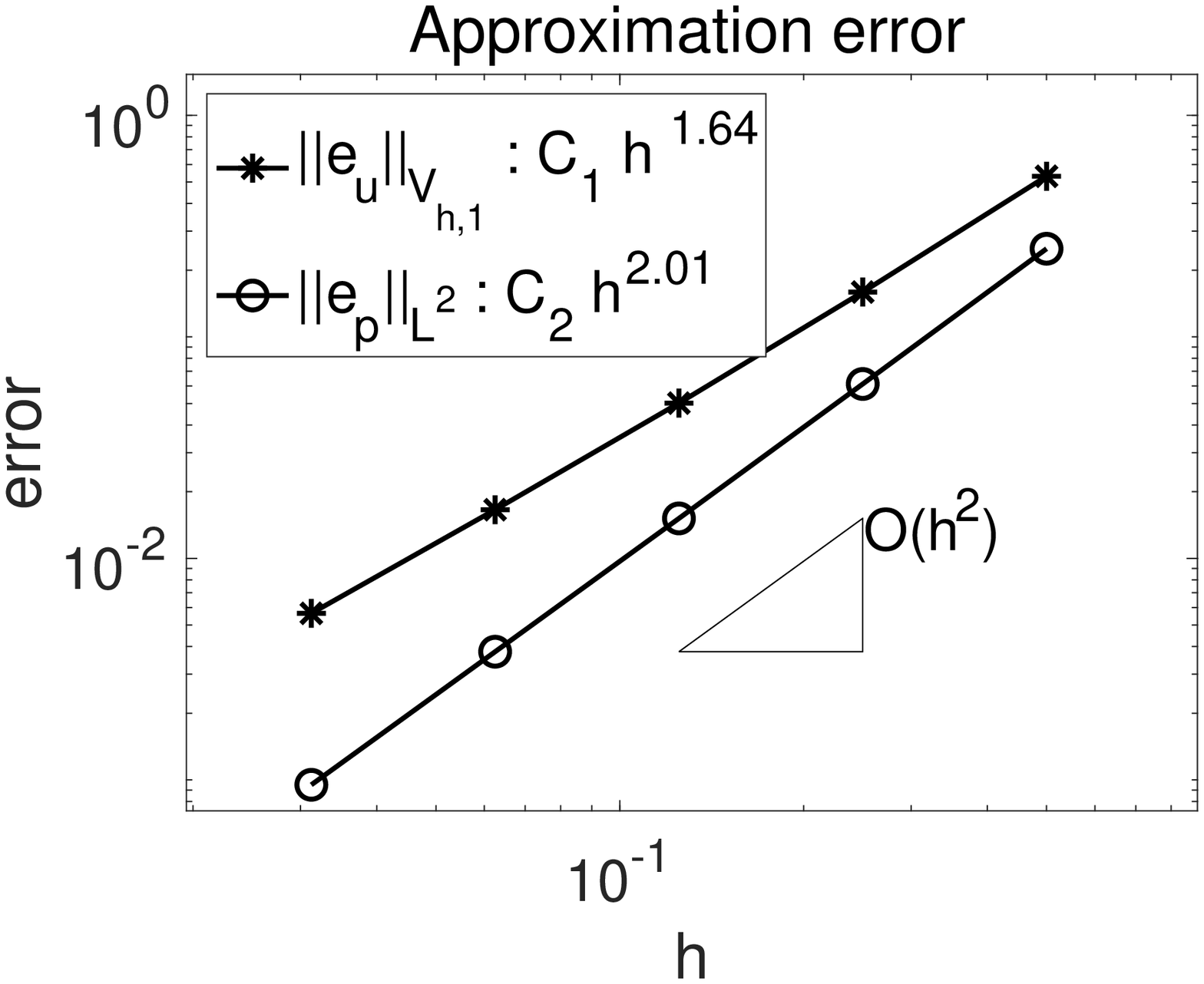}
\caption{Convergence rates for the $P_1$-$P_1$-$P_0$ and $P_2$-$P_2$-$P_1$ modified WG scheme (\ref{dispro2})
  on the left mesh in Figure \ref{ringmesh}.}
\label{p1p2p3p4ring}
\end{center}
\end{figure}


\begin{figure}[H]
\begin{center}
 \includegraphics[width=2in]{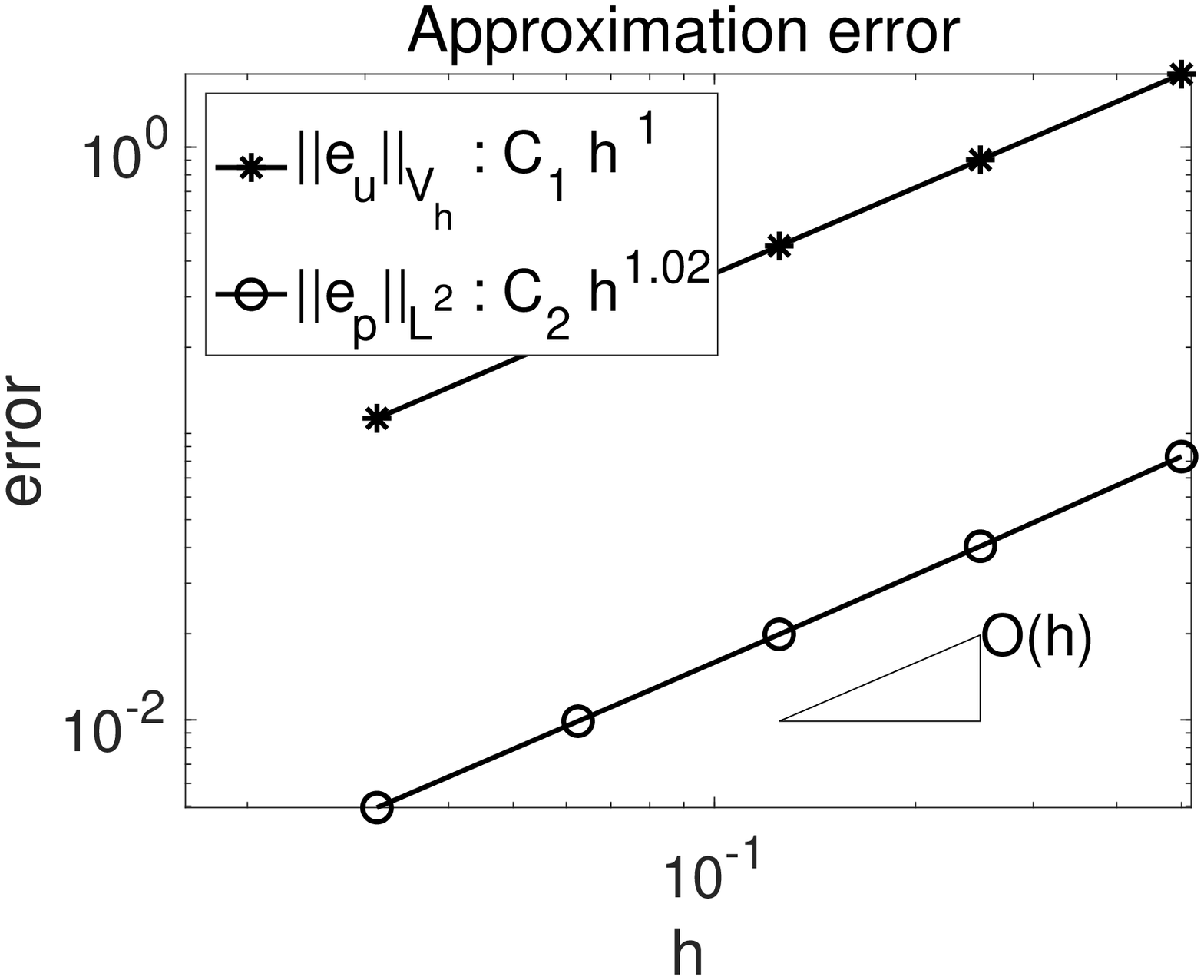} \includegraphics[width=2in]{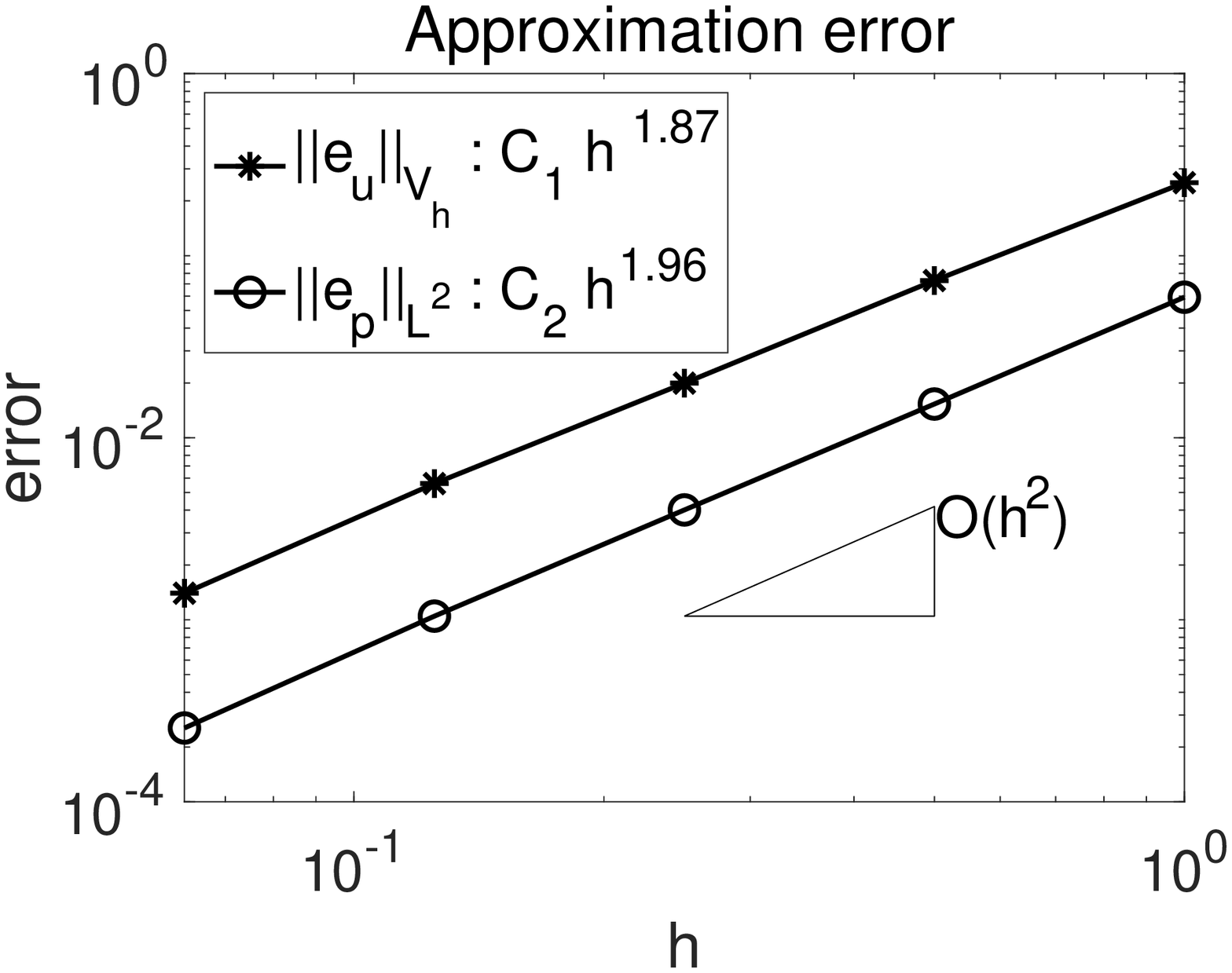}
\caption{Convergence rates for the $P_1$-$P_1$-$P_0$ and  $P_2$-$P_2$-$P_1$ original WG scheme  (\ref{dispro})
  on the middle mesh in Figure \ref{ringmesh}.}
\label{p1p1p0ring}
\end{center}
\end{figure}

\begin{figure}[H]
\begin{center}
 \includegraphics[width=2in]{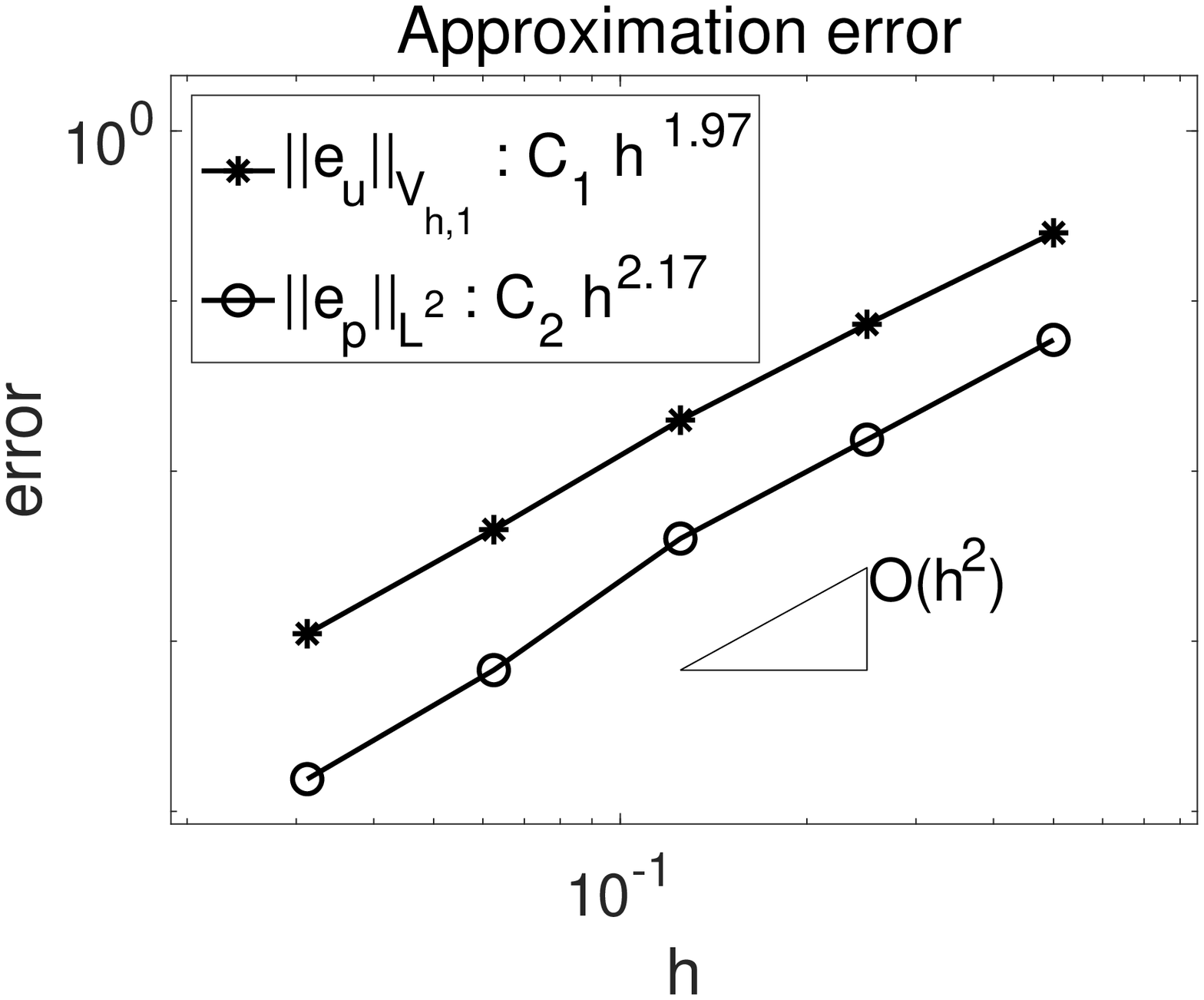} \includegraphics[width=2in]{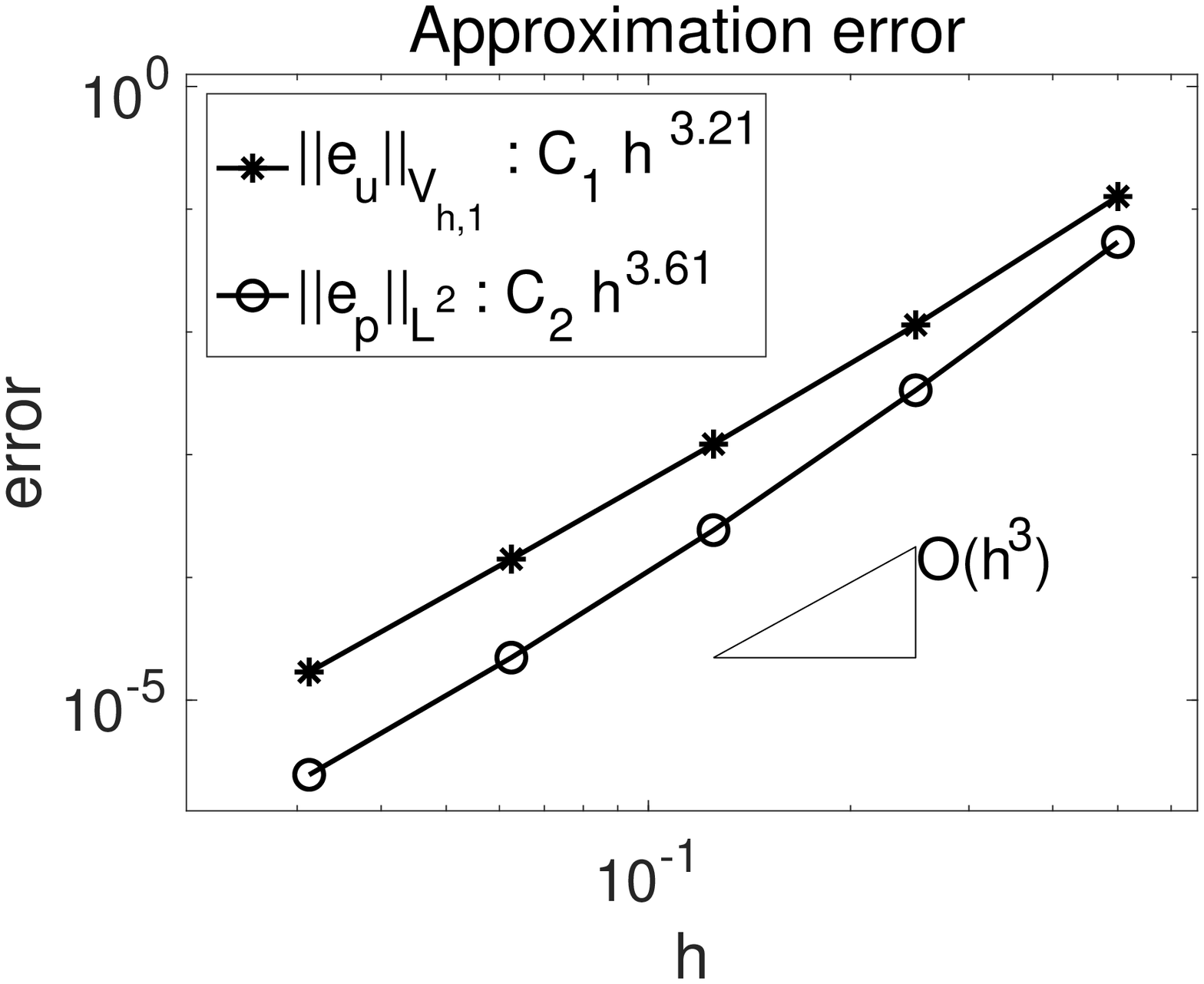} \includegraphics[width=2in]{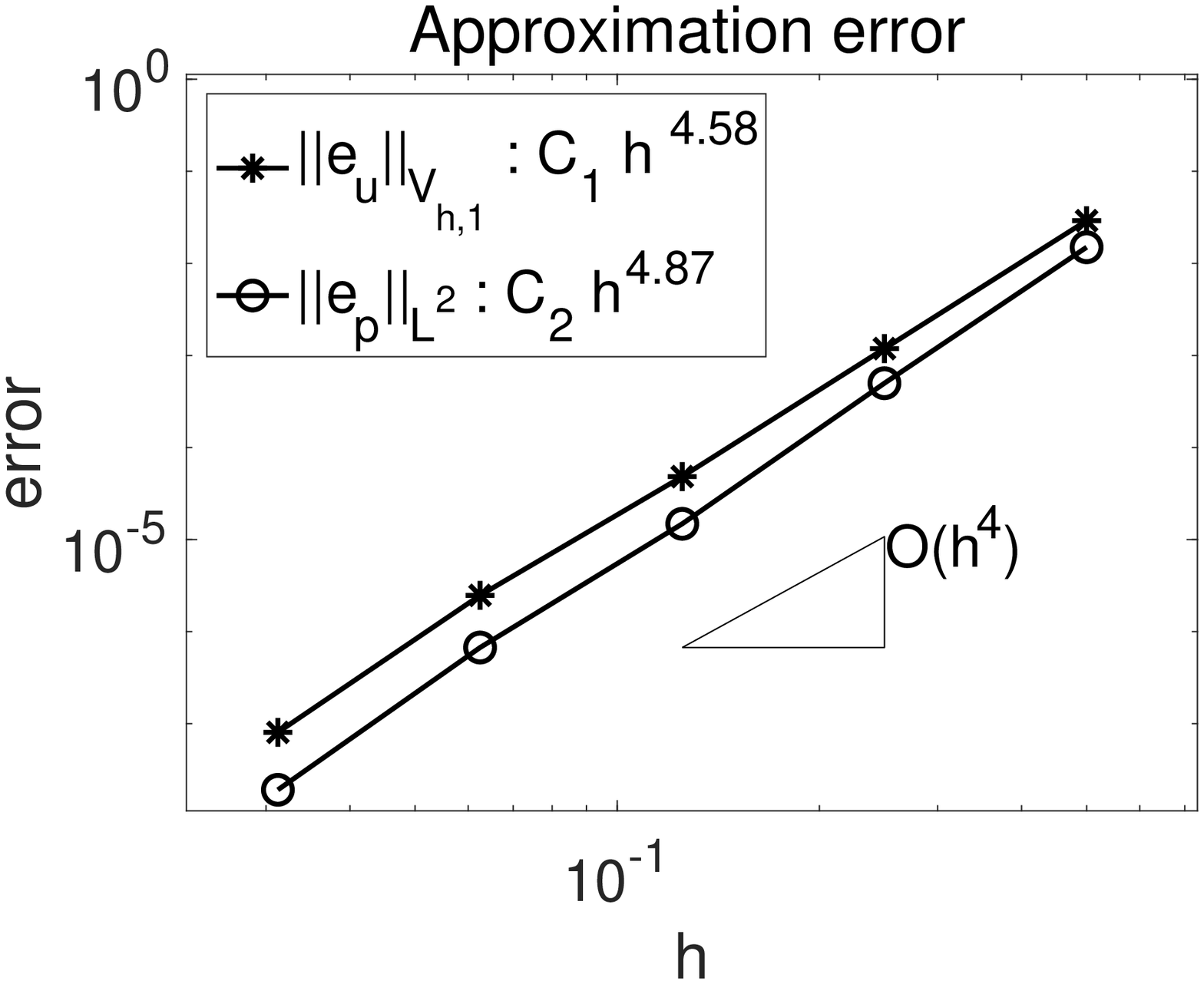} 
\caption{Convergence rates for the $P_2$-$P_2$-$P_1$, $P_3$-$P_3$-$P_2$ and $P_4$-$P_4$-$P_3$ modified WG scheme (\ref{dispro2})
  on the right mesh in Figure \ref{ringmesh}.}
\label{p2p3p4ringm}
\end{center}
\end{figure}

\appendix
\section{Proof of lemmas \ref{lemma1} and \ref{lemma2}} \label{appendix1}
{\bf Proof of Lemma \ref{lemma1}.} Testing the first equation in (\ref{primalproblem}) with $\vv_0$ in $\vv\in \vV_h$  and using integration by parts, one gets
\begin{equation}
\begin{aligned}\label{2.8}
0=& ( \vu, \vv_0)_{\Omega_h}+(\nabla p, \vv_0)_{\Omega_h} \\
=& ( \vu, \vv_0)_{\Omega_h}+\sum_{\e \in \calT_h}(-(\nabla \cdot \vv_0, p)_K+\langle\vv_0 \cdot \vn, p\rangle_{\partial K}) \\
=&( \vu, \vv_0)_{\Omega_h} +\sum_{K \in \calT_h}(-(\nabla \cdot \vv_0, p)_K+\langle(\vv_0-\vv_b) \cdot \vn, p\rangle_{\partial K}),\\
\end{aligned}
\end{equation}
where, in the last step we have used $\vv_b=\vzero$ on $\partial \Omega_h,$ and  the continuity of  $p$ across edges in $\calE_h^{I}$.
Since $\bbQ_h$ and $\vQ_h$ are projections, one gets
\begin{equation}
\begin{aligned}
  b_h(\vv, \bbQ_h  p) &= -(\nabla_{w} \cdot \vv, \bbQ_h  p ) \\
  &=\sum_{K \in \calT_h}\big( (\vv_0, \nabla(\bbQ_h  p))_K-\langle\vv_b \cdot \vn, \bbQ_h  p\rangle_{\partial K}\big) \\
&=\sum_{K \in \calT_h}\big(-(\nabla \cdot \vv_0, \bbQ_h p)_K+\langle(\vv_0-\vv_b) \cdot \vn, \bbQ_h  p\rangle_{\partial K}\big) \\
&=\sum_{K \in \calT_h}\big(-(\nabla \cdot \vv_0, p)_K+\langle(\vv_0-\vv_b) \cdot \vn, \bbQ_h  p\rangle_{\partial K}\big).
\end{aligned}
\end{equation}
Using the definitions of $\vtQ_h$ and $\vQ_h$, we have
\begin{equation}\label{2.11}
\begin{aligned}
c(\vtQ_h\vu, \vv) &= \rho \sum_{K \in \calT_h} h_K^{-1}\langle(\vQ_0\vu-\vtQ_b\vu) \cdot \vn,(\vv_0-\vv_b) \cdot \vn\rangle_{\partial K},&\\
&=c(\vQ_h\vu, \vv) + \rho \sum_{K \in \calT_h} h_K^{-1}\langle{\vQ}_b\vu \cdot \vn,(\vv_0-\vv_b) \cdot \vn\rangle_{\partial K\cap\partial\Omega_h},&\\
&=c(\vQ_h\vu, \vv)+ \rho \sum_{K \in \calT_h} h_K^{-1}\langle \vu \cdot \vn,(\vv_0-\vv_b) \cdot \vn\rangle_{\partial K\cap\partial\Omega_h} \\
&= c(\vQ_h\vu, \vv) + l_s(\vv).
\end{aligned}
\end{equation}

Combining \eqref{2.8}-\eqref{2.11}, one gets
\begin{equation}\begin{aligned}\label{2.10}
a_h( \vtQ_h  {\vu}, \vv)+b_h(\vv, \bbQ_h  p)&=  ( \vQ_0 {\vu}, \vv_0)_{\Omega_h}+c(\vtQ_h {\vu}, \vv)+b_h(\vv, \bbQ_h  p) \\
&=({\vu}, \vv_0)_{\Omega_h}+c(\vQ_h {\vu}, \vv) +l_s(\vv) \\
&\qquad +\sum_{K \in \calT_h}(-(\nabla \cdot \vv_0, p)_K+\langle(\vv_0-\vv_b) \cdot \vn, \bbQ_h  p\rangle_{\partial K}) \\
&=c(\vQ_h \vu, \vv)+l_s(\vv) -l_{\mathrm{div}}(\vv).
\end{aligned}\end{equation}
This completes the proof of the lemma.
\qed

\medskip
{\bf Proof of Lemma \ref{lemma2}.}
 By Lemma \ref{Projection}, we have
$$
b_h(\vQ_h  {\vu}, q)=-(\nabla_{w} \cdot(\vQ_h  {\vu}), q)_{\Omega_h}=-(\pi_h(\nabla \cdot {\vu}), q)_{\Omega_h}=-(\nabla \cdot  {\vu}, q)_{\Omega_h}=-(g , q)_{\Omega_h}.
$$
Using the definitions of $\vtQ_h$, $\vQ_h$ and $\nabla_w\cdot$, one gets
$$
\begin{aligned}
b_h(\vtQ_h  {\vu}, q)
&=b_h(\vQ_h {\vu}, q) - (\nabla_w\cdot(\vtQ_h  {\vu}-\vQ_h  {\vu}),q)_{\Omega_h} \\
&=-(g, q)_{\Omega_h} + \sum_{\e\in\calT_h^B}\langle \vQ_b {\vu}\cdot {\vn},q\rangle_{\partial\e\cap\partial\Omega_h}\\
&=-(g, q)_{\Omega_h} + l_b(q),\\
\end{aligned}
$$
where the last step follows from the property of the projection $\vQ_b$.
This completes the proof of the lemma.
\qed

%


\end{document}